\newcommand*{\N}{\mathbb{N}}
\newcommand*{\Z}{\mathbb{Z}}
\newcommand*{\R}{\mathbb{R}}
\newcommand*{\asm}{\mathsf{ASM}}
\newcommand*{\order}{\mathsf{ord}}
\newcommand*{\descset}{\mathsf{Desc}}
\newcommand*{\geodsubgraph}{\Gamma}
\newcommand*{\orderset}{\mathsf{Ord}}
\renewcommand{\phi}{\varphi}
\renewcommand{\epsilon}{\varepsilon}
\renewcommand{\theta}{\vartheta}
\DeclareMathOperator{\id}{id}
\newtheorem{lem}{Lemma}[section]
\newtheorem{prop}{Proposition}[section]
\newtheorem{defn}{Definition}[section]
\newtheorem{thm}{Theorem}[section]
\newtheorem{cor}{Corollary}[section]
\newcommand*{\diag}{D}
\newcommand*{\Vic}{\mathcal{V}}
\newcommand{\Prob}{\mathbb{P}}
\title{Sandpiles on the Vicsek fractal explode with probability $\frac{1}{4}$}
\author{Nico Heizmann, Robin Kaiser, Ecaterina Sava-Huss}
\begin{document}
\maketitle
\begin{abstract}
Vicsek fractal graphs are an important class of infinite graphs with self similar properties, polynomial growth and treelike features, on which several dynamical processes such as random walks or Abelian sandpiles can be rigorously analyzed and one can  obtain explicit closed form expressions. While such processes on Vicsek fractals and on Euclidean lattices $\Z^2$ share some properties for instance in  the recurrence behaviour, many quantities related to sandpiles on Euclidean lattices are still poorly understood. The current work focuses on the stabilization and explosion of Abelian sandpiles on Vicsek fractal graphs, and we prove that a sandpile sampled from the infinite volume limit plus one additional particle stabilizes with probability $3/4$, that is, it does not stabilize almost surely and it explodes with the complementary probability $1/4$. We prove the main result by using two different approaches: one of probabilistic nature and one of algebraic flavor. The first approach is based on investigating the particles sent to the boundary of finite volumes and showing that their number stays above four with positive probability. In the second approach  we relate the question of stabilization and explosion of sandpiles in infinite volume to the order of elements of the sandpile group on finite approximations of the infinite Vicsek graph. The method applies to more general state spaces and by employing it we also find all invariant factors of the sandpile groups on the finite approximations of the infinite Vicsek fractal.
\end{abstract}

\textit{2020 Mathematics Subject Classification.} 05C81, 20K01, 60J10, 31E05.

\textit{Keywords}: Abelian sandpile, Markov chains, absorbing states, uniform spanning trees, stabilization, toppling, Vicsek fractal,  infinite volume limit, critical group, invariant factors.

\section{Introduction}\label{sec:intro}
Fractals are intriguing objects characterizing features of real systems, and they can model a broad range of phenomena from real life. Over the past decades, fractals and the behaviour of dynamical (random) processes on them have attracted both the mathematical and physical research community, and they continue to be a very active research field. An interesting research direction is to understand how the geometry of the underlying fractals influences the long term behaviour of the random processes running on them and vice versa. The class of fractals we consider here are the infinite Vicsek fractal graphs $\Vic$ and their finite approximation graphs $\Vic_n$, for $n\in\mathbb{N}$; see Figure \ref{fig:vicsek-level-0-2} for a graphical representation of the first three approximations. On such state spaces we consider a model of redistribution of chips among the vertices. This model is called \emph{the Abelian sandpile model} and was introduced on lattices by Bak, Tang and Wiesenfeld in \cite{bak_self-organized_1988} as a toy model in an attempt to explain the physical mechanism of a system that drives itself into a critical state. In \cite{bak_self-organized_1988} the authors pointed out several key properties (such as power laws and $1/f$ noise) of the model that are also commonly observed in natural phenomena. The model was later generalized to arbitrary finite graphs by Dhar \cite{dhar_self-organized_1990}.

For a finite connected graph $G=(V\cup\{s \}, E)$ with a special vertex $s$ called the sink, we assign to each vertex $v\in V$ a natural number representing the height or the mass at $x$. The Abelian sandpile model (shortly $\asm$) is defined as follows: at each discrete time step we choose a vertex $x\in V$ uniformly at random and add mass $1$ to it. If the resulting mass at $x$ is at least the degree $\deg(x)$ of $x$, then we topple $x$ by sending one unit of mass to each neighbour of $x$, and the mass reaching the sink leaves the system forever. We continue the topplings until all vertices have less mass than the number of neighbours, that is, until we reach a \emph{stable configuration}. The process of adding mass at random and performing topplings until all vertices are stable is a Markov chain on the set of stable sandpile configurations, whose recurrent states form a group called the \emph{sandpile group} or the \emph{critical group}. The unique stationary distribution for this Markov chain is the uniform measure on the recurrent configurations; for more details see  \cite{dhar_self-organized_1990, Holroyd_2008} or \cite{jarai_sandpile_2018} for a beautiful survey with many open questions and conjectures. 

During the current work, we focus on the following questions. If $G=(V,E)$ is now an infinite graph, and $G_1\subset G_2\subset \cdots \subset G$ is a sequence of finite subgraphs of $G$ such that $\cup_{n\in \mathbb N}G_n=G$, one can show  under additional assumptions on the uniform spanning forest of $G$, that the sequence $(\mu_n)_{n\in\mathbb{N}}$ of stationary distributions for the sandpile Markov chain on $G_n$ converges weakly to a limit measure $\mu$ supported on stable sandpile configurations over $G$. We call $\mu$ the \emph{sandpile infinite volume limit} of $G$, shortly $\mathsf{IVL}$. Many interesting highly non-trivial questions can be asked about sandpile statistics under the $\mathsf{IVL}$ measure. Among them, one is to compute the height probabilities for different vertices under the uniform volume limit measure; this has been done on $\Z^2$ \cite{priezzhev_structure_1994}, on the Sierpi\'nski gasket graph \cite{heizmann_height_2023} and on regular trees \cite{height_prob_trees-dhar}. To the best of our knowledge, these are the only three state spaces where the height probabilities have been addressed so far.

Another question is that of \emph{stabilization} and \emph{explosion} of a sandpile. Given a stable sandpile $\eta$ sampled from the $\mathsf{IVL}$ measure $\mu$ on the infinite graph $G$, by adding mass $1$ to a pre-defined vertex $o$, does the sandpile $\eta+\delta_o$ stabilize almost surely? If yes, then does the size of the set of toppled vertices obey a power law distribution? There is so far no general procedure to answer this question, however certain graphs have been meanwhile investigated  and partial results are available.  The sequence of consecutive topplings until stabilization is called \emph{avalanche}. In \cite{bhupatiraju_inequalities_2017}, a lower bound on $\Z^d$ in the case $d\geq 2$ and an upper bound in the case $d\geq3$ for the distribution of avalanche sizes is derived. Furthermore, using heuristic arguments a prediction about the exponent in the power law distribution of avalanche sizes on $\Z^2$ has been made in \cite{manna_large-scale_1990} and on the Sierpi\'nski gasket in \cite{daerden_sandpiles_1998}, both supported by numerical simulations. However, even the question of almost sure stabilization on recurrent graphs remains still open. In the underlying work, we answer this question for the class of infinite Vicsek fractal graphs. In particular, 
we show that the Vicsek graph is an example of a recurrent graph on which we observe non-stabilization  with probability $1/4$. Before stating the main results, we comment briefly on the proofs. We consider two different approaches in order to prove the main result. 
\begin{enumerate}
\item The first method of probabilistic nature is based on understanding the absorbing states of an additional Markov chain that we call \emph{the nested volume Markov chain}. We sample a stable sandpile from the $\mathsf{IVL}$ measure on $\Vic$ and we add a particle to the origin $o$. We then stabilize the sandpile in increasing sets $\Vic_n$ around the origin and define a process $(X_n)_{n\in\N}$, where $X_n$ denotes the number of particles collected at the boundary of $\Vic_n$  after stabilization. Stabilization in nested volumes is allowed in view of the Abelian property of the model. Using the cut point structure of the Vicsek fractal $\Vic$, we show that $(X_n)_{n\in\N}$ is indeed a Markov chain, and we compute the probability of stabilization/explosion by relating it with the probability that $(X_n)_{n\in\N}$ reaches the absorbing state $0$.

\item The second method is of algebraic nature and applies to general infinite graphs $G=(V,E)$, albeit only under rather strong conditions on the sandpile groups of the finite exhaustions of $G$. It relates stabilization in infinite volume limit to the sandpile groups of finite approximations $G_n$ of $G$. We show that a sandpile sampled from the $\mathsf{IVL}$ measure plus one particle does not stabilize almost surely on $G$ if the order of all elements in the sandpile group of $G_n$ is uniformly bounded, for every $n\in \N$. The result on the Vicsek graph then follows after analyzing its sandpile group.
\end{enumerate}
Summarizing the two approaches, we prove the following.
\begin{thm}\label{thm:vicsek-explosion}
Let $\Vic=(V,E)$ be the infinite Vicsek graph and $\mu$ be its infinite volume limit measure. Then we have
\begin{align*}
\mu(\eta+\delta_o\text{ stabilizes})=\frac34,
\end{align*}
 where $\eta:V\rightarrow\N$ is a sandpile sampled from $\mu$, and $\delta_o:V\rightarrow\N$ is the Dirac measure taking values $1$ at $o=(0,0)$ and $0$ everywhere else.
\end{thm}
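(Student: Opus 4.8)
The plan is to carry out the probabilistic route sketched in the introduction: turn the avalanche of $\eta+\delta_o$ into an honest Markov chain whose probability of absorption at $0$ equals $3/4$, and then corroborate the value through the algebraic argument on the sandpile groups of the $\Vic_n$. First I would record the self-similar description of the finite approximations: $\Vic_{n+1}$ is assembled from five isometric copies of $\Vic_n$ — a central copy containing the origin $o$ and four ``arm'' copies — each arm joined to the central copy by a single edge, with the boundary $\partial\Vic_n$ (the vertices through which $\Vic_n$ attaches to the next level) having the same fixed size for every $n$. Since every gluing uses a single edge, $\partial\Vic_n$ is a cut set separating $o$ from infinity. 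Invoking the Abelian property, I would stabilize $\eta+\delta_o$ successively in $\Vic_1\subset\Vic_2\subset\cdots$, parking on $\partial\Vic_n$ the grains that reach it and releasing them again when passing to $\Vic_{n+1}$, and set $X_n$ to be the number of grains on $\partial\Vic_n$ at the end of the $n$-th stage. The least-action principle then gives that $\eta+\delta_o$ stabilizes on $\Vic$ if and only if $X_n=0$ for some $n$ (once no grain crosses $\partial\Vic_n$ the avalanche is frozen inside $\Vic_n$), that $0$ is absorbing, and hence $\mu(\eta+\delta_o\text{ stabilizes})=\Prob(\exists\, n:X_n=0)$.

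\textbf{The Markov property and the transition kernel.} The heart of the proof is that $(X_n)_{n\in\N}$ is a Markov chain with an $n$-independent transition kernel. Two ingredients feed into this. First, the cut-edge structure: the evolution of the avalanche inside $\Vic_{n+1}\setminus\Vic_n$ is driven only by the number $X_n$ of grains released across $\partial\Vic_n$, not by how they were produced, while the rotational symmetry of $\Vic$ and of $\mu$ controls how those $X_n$ grains split among the four arms. Second, the structure of $\mu$: using the spanning-forest (burning) description of the IVL together with the fact that the wired uniform spanning forest of $\Vic$ is almost surely one-ended, the restrictions of $\eta$ to the four arm copies of $\Vic_n$ inside $\Vic_{n+1}$ are conditionally independent of the past $\F_n$ and of one another, with a law not depending on $n$. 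Feeding the released grains into these fresh copies of $\Vic_n$ and counting what is transmitted to $\partial\Vic_{n+1}$ exhibits $X_{n+1}$ as a fixed stochastic function of $X_n$ — the Markov property — and the transition probabilities come from the one-block transmission law (how many grains a copy of $\Vic_n$, carrying a fresh sample of $\eta$ and fed $j$ grains at its entry vertex, pushes out).

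\textbf{Solving the chain.} Matching the ``stays above four'' statement, I would build a monotone coupling showing that from any state $k\ge 4$ the chain remains $\ge 4$ with probability bounded below uniformly in $k$ and $n$, so with positive probability it never reaches $0$; by contrast, from each state in $\{1,2,3\}$ it reaches $0$ in one step with positive probability. A geometric-trials (Borel--Cantelli) argument on the upper tail, combined with the explicit one-step transition probabilities out of the low states — computable from the small, fully self-similar combinatorics of a single $\Vic_n$-block together with the height statistics of $\mu$ at the entry vertices — pins $\Prob(\exists\, n:X_n=0)$ at exactly $3/4$, i.e. the explosion probability at $1/4$. Independently, the algebraic approach reduces non-stabilization to the uniform boundedness of element orders in the sandpile groups of the $\Vic_n$; computing those groups and their invariant factors and verifying the boundedness condition recovers the constant $1/4$ and serves as a cross-check.

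\textbf{Main obstacle.} I expect the crux to be the Markov property: proving rigorously that, conditionally on $X_n$, the split of the released grains among the four arms together with the arm configurations carries no residual dependence on $\F_n$, and then extracting the one-block transmission law in usable closed form. Once that kernel is in hand, the hitting-probability identity, the coupling controlling large states, and the final arithmetic producing $3/4$ are comparatively routine.
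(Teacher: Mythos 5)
Your high-level strategy --- track the number of grains crossing the successive cut points that separate $o$ from infinity, show this is a homogeneous Markov chain, and identify stabilization with absorption at $0$ --- is indeed the paper's first proof. But the execution you sketch has two genuine gaps, one geometric and one structural, and the second one is fatal to obtaining the exact value $3/4$ rather than a mere positive lower bound on the explosion probability.

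First, the geometry. The origin $o=(0,0)$ is the lower-left corner of every $\Vic_n$, not the centre of anything; the five copies of $\Vic_{n-1}$ inside $\Vic_n$ are glued at shared cut \emph{vertices}, not joined by single edges; and the only connection of $\Vic_n$ to the rest of $\Vic$ is the single vertex $(3^n,3^n)$. Consequently the avalanche does not ``split among four arms'': it propagates along the diagonal chain of complete graphs $K^1,K^2,\dots$, and every off-diagonal branch $T^x_n$ returns to its root exactly the grains it receives, leaving its configuration unchanged (this is the paper's Lemma \ref{lem:diagonalization}, and it is what reduces the problem to a one-dimensional chain of $K_4$'s). Your transmission law, built from grains being fed into four fresh copies of $\Vic_n$, would be computing the wrong kernel. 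Relatedly, the chain that is time-homogeneous is indexed by the diagonal block $K^i$, not by the fractal level $n$: for the level-indexed chain $X_{3^n}$ the one-step kernel is a $(2\cdot 3^n)$-fold power of the block kernel and is certainly not $n$-independent, so the homogeneity you assert for your chain fails as stated.

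Second, and more importantly, you are missing the algebraic fact that makes the answer exact: $4[\delta_x]=[0]$ for every diagonal vertex $x$ (Lemma \ref{lem:4-particles-to-origin} and Corollary \ref{cor:4particlesDiag}). This forces the chain to live on the finite state space $\{0,1,2,3,4\}$ with \emph{both} $0$ and $4$ absorbing --- once four grains cross one cut point, exactly four cross every subsequent one, deterministically. The computation of $3/4$ is then a two-barrier gambler's-ruin/hitting problem for an explicit $5\times 5$ matrix obtained from the $16$ equally likely recurrent configurations of $K_4$, started from $X_0=1$. Your substitute --- a monotone coupling keeping the chain $\geq 4$ with uniformly positive probability plus a Borel--Cantelli argument --- could at best show that the explosion probability is positive; it gives no mechanism for pinning the stabilization probability at exactly $3/4$, and the proposal never actually derives that number. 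To repair the argument you would need to (i) prove the diagonal reduction, (ii) prove the order-$4$ identity to cap the state space and make $4$ absorbing, (iii) justify that the restrictions $\eta|_{K^i}$ are i.i.d.\ uniform over the recurrent states of $K_4$ (via the burning bijection and the product structure of the UST over the $K_4$ blocks), and (iv) solve $(P-I)x=0$ with $x_0=1$, $x_4=0$.
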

The second method  yields the following relation between stabilization in infinite volume and the order of the elements of the sandpile group on finite approximations $G_n$ of the infinite graph $G$. We denote the set of all toppled vertices during the stabilization of $\eta$ by $\mathsf{T}(\eta)$ and the diameter of a subgraph $A\subseteq G$ is defined as
\begin{align*}
    \mathsf{diam}(A)=\sup\{d(v,w):\ v,w\in A\},
\end{align*}
where $d$ is the graph distance in $G$.
\begin{thm}\label{thm:relation-order-stabilziation}
Let $G$ be an infinite, connected and locally finite graph with exhaustion $(G_n)_{n\in\N}$. Assume that the infinite volume limit $\mu$ exists as a weak limit of the sequence $(\mu_n)_{n\in\N}$ of measures, where $\mu_n$ is the uniform measure on the sandpile group $\mathcal{R}_n$ of $G_n$, for every $n\in\mathbb{N}$.
 If there exists $M\in\N$ such that for all $n\in\N$ and all  $\eta_n\in\mathcal{R}_n$ we have $\order([\eta_n])<M$, then for any $x\in G$ and $R>0$ it holds
    \begin{align*}
        \mu(\mathsf{diam}(\mathsf{T}(\eta + \delta_x))>R)>c,
    \end{align*}
where $c>0$ is a constant and $\eta$ is sampled from $\mu$.
\end{thm}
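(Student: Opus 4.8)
The plan is to prove a quantitative lower bound in each finite volume $G_n$ for the uniform measure $\mu_n$ on $\mathcal{R}_n$, and then pass to the limit by weak convergence. Fix $x\in G$ and $R>0$, write $B_R(x)=\{v\in G:\ d(v,x)\le R\}$, and take $n$ large enough that $B_{R+1}(x)\subseteq G_n$ and the sink of $G_n$ does not lie in $B_{R+1}(x)$; such $n$ exist since $(G_n)_{n\in\N}$ exhausts $G$. Let $m=\order([\delta_x])$ in the sandpile group of $G_n$; since $[\delta_x]$ has a recurrent representative in $\mathcal{R}_n$, the hypothesis gives $m<M$. For $\eta\in\mathcal{R}_n$ let $\mathsf{T}_n(\eta)$ denote the set of vertices that topple when $\eta+\delta_x$ is stabilized inside $G_n$; this set is well defined by the Abelian property.

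The core of the argument is a mass-conservation obstruction. I claim the events $\{\mathsf{T}_n(\eta\oplus j\delta_x)\subseteq B_R(x)\}$, $j=0,1,\dots,m-1$, cannot all hold for the same $\eta\in\mathcal{R}_n$. Indeed, if they did, put $\sigma_0=\eta$ and $\sigma_{j+1}=\sigma_j\oplus\delta_x=\eta\oplus(j+1)\delta_x$; then the passage $\sigma_j\to\sigma_{j+1}$ adds one chip at $x$ and topples only vertices of $B_R(x)$, all of whose neighbours lie in $B_{R+1}(x)$, a set not containing the sink. Hence no mass leaves the system in any of these steps, so $\abs{\sigma_{j+1}}=\abs{\sigma_j}+1$ and therefore $\abs{\sigma_m}=\abs{\eta}+m$. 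But $m=\order([\delta_x])$ forces $\sigma_m=\eta\oplus m\delta_x=\eta$ (two recurrent configurations with the same group class are equal), contradicting $m\ge 1$.

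Because $\eta\mapsto\eta\oplus j\delta_x$ is a $\mu_n$-preserving bijection of $\mathcal{R}_n$, each event $\{\mathsf{T}_n(\eta\oplus j\delta_x)\not\subseteq B_R(x)\}$ has the same probability $q_n:=\mu_n(\mathsf{T}_n(\eta)\not\subseteq B_R(x))$, and the claim together with a union bound gives $1\le m\,q_n$, i.e.\ $q_n\ge 1/m\ge 1/(M-1)>1/M$. Next I would observe that $\{\mathsf{T}_n(\eta)\subseteq B_R(x)\}$ depends only on the restriction $\eta\restriction_{B_{R+1}(x)}$: if the avalanche stays inside $B_R(x)$, the whole stabilization only inspects and modifies heights in $B_{R+1}(x)$, and otherwise $\mathsf{T}_n(\eta)\not\subseteq B_R(x)$. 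Moreover, for $n$ large this event coincides, as a subset of configurations on $B_{R+1}(x)$, with $\{\mathsf{T}_\infty(\eta)\subseteq B_R(x)\}$, where $\mathsf{T}_\infty$ refers to stabilizing $\eta+\delta_x$ on all of $G$: a $B_R(x)$-confined avalanche is computed identically in $G_n$ and in $G$, while in the unconfined case (including the exploding one) the toppled set fails to be contained in $B_R(x)$ in both graphs. Applying the weak convergence $\mu_n\Rightarrow\mu$ to this clopen cylinder set yields $\mu(\mathsf{T}_\infty(\eta)\subseteq B_R(x))=\lim_n\mu_n(\mathsf{T}_n(\eta)\subseteq B_R(x))\le 1-1/(M-1)$.

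Finally, adding one chip at $x$ to a stable configuration makes $x$ its unique unstable vertex, so if any toppling occurs then $x$ is the first vertex to topple; hence $\mathsf{T}_\infty(\eta)\neq\emptyset$ implies $x\in\mathsf{T}_\infty(\eta)$, and consequently $\{\mathsf{T}_\infty(\eta)\not\subseteq B_R(x)\}\subseteq\{\mathsf{diam}(\mathsf{T}_\infty(\eta))>R\}$. Therefore $\mu(\mathsf{diam}(\mathsf{T}(\eta+\delta_x))>R)\ge 1/(M-1)>1/M$, which proves the theorem with $c=1/M$. The step I expect to require the most care is the mass-conservation obstruction: making precise that for $n$ large the sink is far enough from $x$ that a $B_R(x)$-confined avalanche loses no mass, that the finite-volume confined stabilization really is the true stabilization (least-action principle), and that $\{\mathsf{T}_n(\eta)\subseteq B_R(x)\}$ is a genuinely local event so that weak convergence applies; once these points are in place, the combinatorial heart — combining $\mu_n$-invariance under $\oplus\,\delta_x$ with a union bound over the cyclic orbit of $[\delta_x]$ — is immediate.
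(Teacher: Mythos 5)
Your proof is correct, and it rests on the same two pillars as the paper's argument: the fact that $\eta\mapsto\eta\oplus\delta_x$ is a $\mu_n$-preserving bijection of $\mathcal{R}_n$, and the fact that $\order([\delta_x])<M$ forces the added mass to escape after at most $M$ additions. The organization, however, is genuinely different. The paper argues by contradiction: it proves a doubling estimate (Lemma \ref{lem:send-to-sink-upper-bound}), iterates it to bound the probability that $\eta+M\delta_x$ produces a large avalanche by $2^{M}$ times the one-particle probability, and then contradicts Lemma \ref{lem:almost-surely-to-sink}, which guarantees that $\eta+M\delta_x$ sends a particle to the sink almost surely. You replace the iterated doubling by a single union bound over the cyclic orbit $\eta,\eta\oplus\delta_x,\dots,\eta\oplus(m-1)\delta_x$, and you detect the forced escape of mass by chip-counting inside $B_{R+1}(x)$ rather than at the sink; this buys a direct proof with the explicit and much better constant $c=1/(M-1)$, whereas the paper's choice $\varepsilon=2^{-M-1}$ only yields $c=2^{-M-1}$. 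The point you rightly flag as delicate --- that $\{\mathsf{T}_n(\eta)\subseteq B_R(x)\}$ is determined by $\eta\restriction_{B_{R+1}(x)}$, agrees with the corresponding infinite-volume event, and is therefore amenable to the weak convergence $\mu_n\Rightarrow\mu$ --- is handled correctly by your partial-stabilization/least-action sketch, and in fact the paper's own proof silently relies on the same localization when it transfers the diameter bound from $\mu$ to $\mu_n$ and when it identifies ``reaches the sink'' with ``diameter exceeds $R$'' for large $n$. So your write-up is, if anything, the more self-contained of the two.
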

While the second method is more general and applies to a big class of infinite graphs $G$, the first method is stronger in the sense that it leads to the exact value of the probability of stabilizing a sandpile plus one additional particle on the Vicsek graph. 

\textbf{Organization of the paper.} Section \ref{sec:prelim} defines the infinite Vicsek fractal $\Vic$, its finite approximations $\Vic_n$ for $n\in \N$, and the Abelian sandpile model together with the associated Markov chain on stable sandpile configurations.  Section \ref{sec:nested-volume-method} proves Theorem \ref{thm:vicsek-explosion} by introducing the \emph{nested volume Markov chain} and relating its absorbing states with the explosion of the Abelian sandpile. In Section \ref{sec:relation-order-stabilization} we prove Theorem \ref{thm:relation-order-stabilziation} and derive Theorem \ref{thm:vicsek-explosion} as a corollary.  Section \ref{sec:vicsek-sandpile-group} analyses the sandpile group of $\Vic_n$ and the orders of its elements, for any $n\in\N$.

\section{Preliminaries}\label{sec:prelim}

\subsection{Vicsek fractal graphs}

The $n$-level Vicsek graph, for every $n\in\mathbb{N}$, is a graph that can be illustrated by embedding it in the Euclidean plane and will throughout be denoted by $\Vic_n=(V_n,E_n)$ and defined inductively as follows. The $0$-level  Vicsek graph $\Vic_0 = (V_0,E_0)$ is defined as the complete graph with $4$ vertices parametrized by $V_0 = \{(0,0),(1,0),(0,1),(1,1)\}$ and edges among them by $E_0=\{\{v,w\}:v,w\in V_0\}$. Five shifted copies of $\Vic_{n-1}$ form the graph $\Vic_n = (V_n, E_n)$ with vertex set $V_n$ 
$$ V_n = V_{n-1} \cup \big( (3^{n-1},3^{n-1}) + V_{n-1} \big) \cup \big( (2\cdot 3^{n-1},0) + V_{n-1} \big)
    \\ \cup \big( (0,2\cdot 3^{n-1}) + V_{n-1} \big)\cup \big( 2\cdot (3^{n-1},3^{n-1}) + V_{n-1} \big)$$
and edge set $E_n$
$$E_n = E_{n-1} \cup \big( (3^{n-1},3^{n-1}) + E_{n-1} \big) \cup \big( (2\cdot 3^{n-1},0) + E_{n-1} \big)
    \\ \cup \big( (0,2\cdot 3^{n-1}) + E_{n-1} \big) \cup \big( 2\cdot (3^{n-1},3^{n-1}) + E_{n-1} \big)$$
where for $(x,y)\in \R^2$, we use the notation $V+(x,y)=\{(a+x,b+y): (a,b)\in V\}$ for any set  $V\in \R^2$ and $E+(x,y)=\{\{v+(x,y),w+(x,y)\}:\{v,w\}\in E\}$ for any set of edges $E\subset \{\{x,y\}:x,y\in V\}$. The infinite Vicsek graph $\Vic$ is then defined as the union of all finite level Vicsek graphs
\begin{align*}
    \Vic = \Big(\bigcup_{n\in\N}V_n, \bigcup_{n\in\N}E_n \Big).
\end{align*}

\begin{figure}
    \centering
    \begin{tabular}{ccc}
        \begin{tikzpicture}[scale=0.85, transform shape]
            \node[shape=circle,draw=black] (A) at (0,0) {};
            \node[shape=circle,draw=black] (B) at (1,0) {};
            \node[shape=circle,draw=black] (C) at (0,1) {};
            \node[shape=circle,draw=black] (D) at (1,1) {};
            \draw[color=black] (A) -- (B) -- (C) -- (D) -- (A) -- (C);
            \draw[color=black] (B) -- (D);
        \end{tikzpicture} &
        \begin{tikzpicture}[scale=0.85, transform shape]
            \node[shape=circle,draw=black] (A) at (0,0) {};
            \node[shape=circle,draw=black] (B) at (1,0) {};
            \node[shape=circle,draw=black] (C) at (0,1) {};
            \node[shape=circle,draw=black] (D) at (1,1) {};
            \draw[color=black] (A) -- (B) -- (C) -- (D) -- (A) -- (C);
            \draw[color=black] (B) -- (D);
            \begin{scope}[shift={(1,1)}]
                \node[shape=circle,draw=black] (A) at (0,0) {};
                \node[shape=circle,draw=black] (B) at (1,0) {};
                \node[shape=circle,draw=black] (C) at (0,1) {};
                \node[shape=circle,draw=black] (D) at (1,1) {};
                \draw[color=black] (A) -- (B) -- (C) -- (D) -- (A) -- (C);
                \draw[color=black] (B) -- (D);
            \end{scope}
            \begin{scope}[shift={(2,0)}]
                \node[shape=circle,draw=black] (A) at (0,0) {};
                \node[shape=circle,draw=black] (B) at (1,0) {};
                \node[shape=circle,draw=black] (C) at (0,1) {};
                \node[shape=circle,draw=black] (D) at (1,1) {};
                \draw[color=black] (A) -- (B) -- (C) -- (D) -- (A) -- (C);
                \draw[color=black] (B) -- (D);
            \end{scope}
            \begin{scope}[shift={(2,2)}]
                \node[shape=circle,draw=black] (A) at (0,0) {};
                \node[shape=circle,draw=black] (B) at (1,0) {};
                \node[shape=circle,draw=black] (C) at (0,1) {};
                \node[shape=circle,draw=black] (D) at (1,1) {};
                \draw[color=black] (A) -- (B) -- (C) -- (D) -- (A) -- (C);
                \draw[color=black] (B) -- (D);
            \end{scope}
            \begin{scope}[shift={(0,2)}]
                \node[shape=circle,draw=black] (A) at (0,0) {};
                \node[shape=circle,draw=black] (B) at (1,0) {};
                \node[shape=circle,draw=black] (C) at (0,1) {};
                \node[shape=circle,draw=black] (D) at (1,1) {};
                \draw[color=black] (A) -- (B) -- (C) -- (D) -- (A) -- (C);
                \draw[color=black] (B) -- (D);
            \end{scope}
        \end{tikzpicture} &
        \begin{tikzpicture}[scale=0.85, transform shape]
            \node[shape=circle,draw=black] (A) at (0,0) {};
            \node[shape=circle,draw=black] (B) at (1,0) {};
            \node[shape=circle,draw=black] (C) at (0,1) {};
            \node[shape=circle,draw=black] (D) at (1,1) {};
            \draw[color=black] (A) -- (B) -- (C) -- (D) -- (A) -- (C);
            \draw[color=black] (B) -- (D);
            \begin{scope}[shift={(1,1)}]
                \node[shape=circle,draw=black] (A) at (0,0) {};
                \node[shape=circle,draw=black] (B) at (1,0) {};
                \node[shape=circle,draw=black] (C) at (0,1) {};
                \node[shape=circle,draw=black] (D) at (1,1) {};
                \draw[color=black] (A) -- (B) -- (C) -- (D) -- (A) -- (C);
                \draw[color=black] (B) -- (D);
            \end{scope}
            \begin{scope}[shift={(2,0)}]
                \node[shape=circle,draw=black] (A) at (0,0) {};
                \node[shape=circle,draw=black] (B) at (1,0) {};
                \node[shape=circle,draw=black] (C) at (0,1) {};
                \node[shape=circle,draw=black] (D) at (1,1) {};
                \draw[color=black] (A) -- (B) -- (C) -- (D) -- (A) -- (C);
                \draw[color=black] (B) -- (D);
            \end{scope}
            \begin{scope}[shift={(2,2)}]
                \node[shape=circle,draw=black] (A) at (0,0) {};
                \node[shape=circle,draw=black] (B) at (1,0) {};
                \node[shape=circle,draw=black] (C) at (0,1) {};
                \node[shape=circle,draw=black] (D) at (1,1) {};
                \draw[color=black] (A) -- (B) -- (C) -- (D) -- (A) -- (C);
                \draw[color=black] (B) -- (D);
            \end{scope}
            \begin{scope}[shift={(0,2)}]
                \node[shape=circle,draw=black] (A) at (0,0) {};
                \node[shape=circle,draw=black] (B) at (1,0) {};
                \node[shape=circle,draw=black] (C) at (0,1) {};
                \node[shape=circle,draw=black] (D) at (1,1) {};
                \draw[color=black] (A) -- (B) -- (C) -- (D) -- (A) -- (C);
                \draw[color=black] (B) -- (D);
            \end{scope}
            \begin{scope}[shift={(3,3)}]
                \node[shape=circle,draw=black] (A) at (0,0) {};
                \node[shape=circle,draw=black] (B) at (1,0) {};
                \node[shape=circle,draw=black] (C) at (0,1) {};
                \node[shape=circle,draw=black] (D) at (1,1) {};
                \draw[color=black] (A) -- (B) -- (C) -- (D) -- (A) -- (C);
                \draw[color=black] (B) -- (D);
                \begin{scope}[shift={(1,1)}]
                    \node[shape=circle,draw=black] (A) at (0,0) {};
                    \node[shape=circle,draw=black] (B) at (1,0) {};
                    \node[shape=circle,draw=black] (C) at (0,1) {};
                    \node[shape=circle,draw=black] (D) at (1,1) {};
                    \draw[color=black] (A) -- (B) -- (C) -- (D) -- (A) -- (C);
                    \draw[color=black] (B) -- (D);
                \end{scope}
                \begin{scope}[shift={(2,0)}]
                    \node[shape=circle,draw=black] (A) at (0,0) {};
                    \node[shape=circle,draw=black] (B) at (1,0) {};
                    \node[shape=circle,draw=black] (C) at (0,1) {};
                    \node[shape=circle,draw=black] (D) at (1,1) {};
                    \draw[color=black] (A) -- (B) -- (C) -- (D) -- (A) -- (C);
                    \draw[color=black] (B) -- (D);
                \end{scope}
                \begin{scope}[shift={(2,2)}]
                    \node[shape=circle,draw=black] (A) at (0,0) {};
                    \node[shape=circle,draw=black] (B) at (1,0) {};
                    \node[shape=circle,draw=black] (C) at (0,1) {};
                    \node[shape=circle,draw=black] (D) at (1,1) {};
                    \draw[color=black] (A) -- (B) -- (C) -- (D) -- (A) -- (C);
                    \draw[color=black] (B) -- (D);
                \end{scope}
                \begin{scope}[shift={(0,2)}]
                    \node[shape=circle,draw=black] (A) at (0,0) {};
                    \node[shape=circle,draw=black] (B) at (1,0) {};
                    \node[shape=circle,draw=black] (C) at (0,1) {};
                    \node[shape=circle,draw=black] (D) at (1,1) {};
                    \draw[color=black] (A) -- (B) -- (C) -- (D) -- (A) -- (C);
                    \draw[color=black] (B) -- (D);
                \end{scope}
            \end{scope}
            \begin{scope}[shift={(6,0)}]
                \node[shape=circle,draw=black] (A) at (0,0) {};
                \node[shape=circle,draw=black] (B) at (1,0) {};
                \node[shape=circle,draw=black] (C) at (0,1) {};
                \node[shape=circle,draw=black] (D) at (1,1) {};
                \draw[color=black] (A) -- (B) -- (C) -- (D) -- (A) -- (C);
                \draw[color=black] (B) -- (D);
                \begin{scope}[shift={(1,1)}]
                    \node[shape=circle,draw=black] (A) at (0,0) {};
                    \node[shape=circle,draw=black] (B) at (1,0) {};
                    \node[shape=circle,draw=black] (C) at (0,1) {};
                    \node[shape=circle,draw=black] (D) at (1,1) {};
                    \draw[color=black] (A) -- (B) -- (C) -- (D) -- (A) -- (C);
                    \draw[color=black] (B) -- (D);
                \end{scope}
                \begin{scope}[shift={(2,0)}]
                    \node[shape=circle,draw=black] (A) at (0,0) {};
                    \node[shape=circle,draw=black] (B) at (1,0) {};
                    \node[shape=circle,draw=black] (C) at (0,1) {};
                    \node[shape=circle,draw=black] (D) at (1,1) {};
                    \draw[color=black] (A) -- (B) -- (C) -- (D) -- (A) -- (C);
                    \draw[color=black] (B) -- (D);
                \end{scope}
                \begin{scope}[shift={(2,2)}]
                    \node[shape=circle,draw=black] (A) at (0,0) {};
                    \node[shape=circle,draw=black] (B) at (1,0) {};
                    \node[shape=circle,draw=black] (C) at (0,1) {};
                    \node[shape=circle,draw=black] (D) at (1,1) {};
                    \draw[color=black] (A) -- (B) -- (C) -- (D) -- (A) -- (C);
                    \draw[color=black] (B) -- (D);
                \end{scope}
                \begin{scope}[shift={(0,2)}]
                    \node[shape=circle,draw=black] (A) at (0,0) {};
                    \node[shape=circle,draw=black] (B) at (1,0) {};
                    \node[shape=circle,draw=black] (C) at (0,1) {};
                    \node[shape=circle,draw=black] (D) at (1,1) {};
                    \draw[color=black] (A) -- (B) -- (C) -- (D) -- (A) -- (C);
                    \draw[color=black] (B) -- (D);
                \end{scope}
            \end{scope}
            \begin{scope}[shift={(0,6)}]
                \node[shape=circle,draw=black] (A) at (0,0) {};
                \node[shape=circle,draw=black] (B) at (1,0) {};
                \node[shape=circle,draw=black] (C) at (0,1) {};
                \node[shape=circle,draw=black] (D) at (1,1) {};
                \draw[color=black] (A) -- (B) -- (C) -- (D) -- (A) -- (C);
                \draw[color=black] (B) -- (D);
                \begin{scope}[shift={(1,1)}]
                    \node[shape=circle,draw=black] (A) at (0,0) {};
                    \node[shape=circle,draw=black] (B) at (1,0) {};
                    \node[shape=circle,draw=black] (C) at (0,1) {};
                    \node[shape=circle,draw=black] (D) at (1,1) {};
                    \draw[color=black] (A) -- (B) -- (C) -- (D) -- (A) -- (C);
                    \draw[color=black] (B) -- (D);
                \end{scope}
                \begin{scope}[shift={(2,0)}]
                    \node[shape=circle,draw=black] (A) at (0,0) {};
                    \node[shape=circle,draw=black] (B) at (1,0) {};
                    \node[shape=circle,draw=black] (C) at (0,1) {};
                    \node[shape=circle,draw=black] (D) at (1,1) {};
                    \draw[color=black] (A) -- (B) -- (C) -- (D) -- (A) -- (C);
                    \draw[color=black] (B) -- (D);
                \end{scope}
                \begin{scope}[shift={(2,2)}]
                    \node[shape=circle,draw=black] (A) at (0,0) {};
                    \node[shape=circle,draw=black] (B) at (1,0) {};
                    \node[shape=circle,draw=black] (C) at (0,1) {};
                    \node[shape=circle,draw=black] (D) at (1,1) {};
                    \draw[color=black] (A) -- (B) -- (C) -- (D) -- (A) -- (C);
                    \draw[color=black] (B) -- (D);
                \end{scope}
                \begin{scope}[shift={(0,2)}]
                    \node[shape=circle,draw=black] (A) at (0,0) {};
                    \node[shape=circle,draw=black] (B) at (1,0) {};
                    \node[shape=circle,draw=black] (C) at (0,1) {};
                    \node[shape=circle,draw=black] (D) at (1,1) {};
                    \draw[color=black] (A) -- (B) -- (C) -- (D) -- (A) -- (C);
                    \draw[color=black] (B) -- (D);
                \end{scope}
            \end{scope}
            \begin{scope}[shift={(6,6)}]
                \node[shape=circle,draw=black] (A) at (0,0) {};
                \node[shape=circle,draw=black] (B) at (1,0) {};
                \node[shape=circle,draw=black] (C) at (0,1) {};
                \node[shape=circle,draw=black] (D) at (1,1) {};
                \draw[color=black] (A) -- (B) -- (C) -- (D) -- (A) -- (C);
                \draw[color=black] (B) -- (D);
                \begin{scope}[shift={(1,1)}]
                    \node[shape=circle,draw=black] (A) at (0,0) {};
                    \node[shape=circle,draw=black] (B) at (1,0) {};
                    \node[shape=circle,draw=black] (C) at (0,1) {};
                    \node[shape=circle,draw=black] (D) at (1,1) {};
                    \draw[color=black] (A) -- (B) -- (C) -- (D) -- (A) -- (C);
                    \draw[color=black] (B) -- (D);
                \end{scope}
                \begin{scope}[shift={(2,0)}]
                    \node[shape=circle,draw=black] (A) at (0,0) {};
                    \node[shape=circle,draw=black] (B) at (1,0) {};
                    \node[shape=circle,draw=black] (C) at (0,1) {};
                    \node[shape=circle,draw=black] (D) at (1,1) {};
                    \draw[color=black] (A) -- (B) -- (C) -- (D) -- (A) -- (C);
                    \draw[color=black] (B) -- (D);
                \end{scope}
                \begin{scope}[shift={(2,2)}]
                    \node[shape=circle,draw=black] (A) at (0,0) {};
                    \node[shape=circle,draw=black] (B) at (1,0) {};
                    \node[shape=circle,draw=black] (C) at (0,1) {};
                    \node[shape=circle,draw=black] (D) at (1,1) {};
                    \draw[color=black] (A) -- (B) -- (C) -- (D) -- (A) -- (C);
                    \draw[color=black] (B) -- (D);
                \end{scope}
                \begin{scope}[shift={(0,2)}]
                    \node[shape=circle,draw=black] (A) at (0,0) {};
                    \node[shape=circle,draw=black] (B) at (1,0) {};
                    \node[shape=circle,draw=black] (C) at (0,1) {};
                    \node[shape=circle,draw=black] (D) at (1,1) {};
                    \draw[color=black] (A) -- (B) -- (C) -- (D) -- (A) -- (C);
                    \draw[color=black] (B) -- (D);
                \end{scope}
            \end{scope}
        \end{tikzpicture}\\
        $\Vic_0$ & $\Vic_1$ & $\Vic_2$
    \end{tabular}
    \caption{The first three levels of the Vicsek graph $\Vic$.}
    \label{fig:vicsek-level-0-2}
\end{figure}
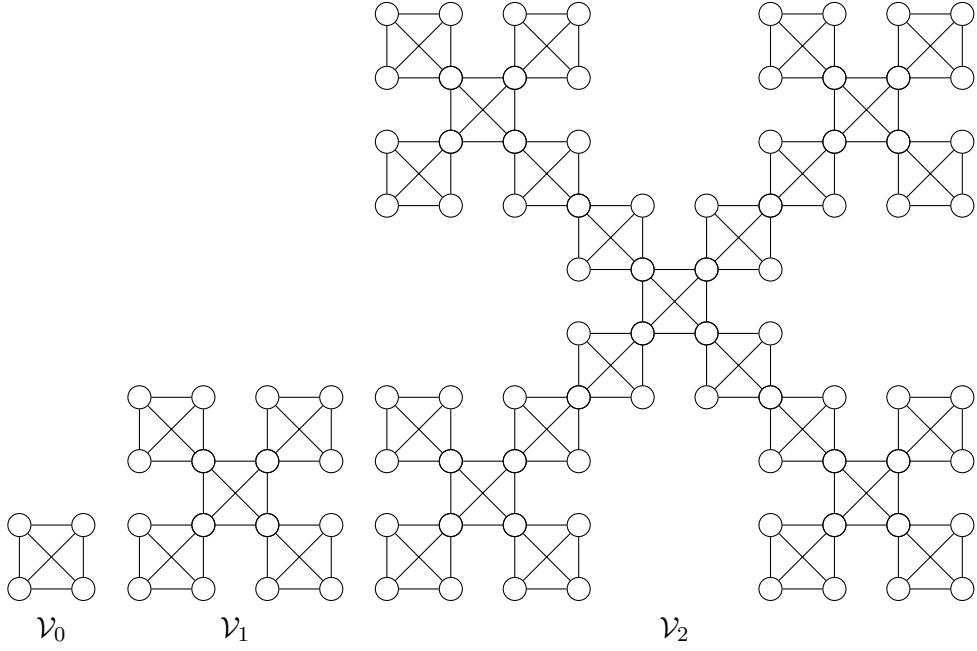

In Figure \ref{fig:vicsek-level-0-2} we illustrate the first three levels $\Vic_0,\Vic_1,\Vic_2$ of the infinite Vicsek fractal graph $\Vic$. One can define the Vicsek fractal as  the unique compact invariant set $K \subset \mathbb{R}^2$ that satisfies a self-similar property, as it is done in the fractal analysis community, see \cite{an-on-fractals}. We are interested in the discrete objects $\Vic$ and their level $n$ finite approximating graphs $\Vic_n$, for $n\in\N$.

\subsection{Abelian sandpile model}

We define the model on finite, connected graphs $G=(V\cup\{s\},E)$ with $|V|<\infty$ that have a designated vertex $s$ called the {sink}, whose role is to collect the excess mass during the stabilization of sandpiles.
If $x,y\in V\cup\{s\}$ are neighbours in $G$, i.e. $\{x,y\}\in E$, then we write $x\sim y$. For a given vertex $x\in V$ we write $\deg(x)$ for the degree of $x$ in $G$, that is, the number of neighbours of $x$ in $G$,
\begin{align*}
    \deg(x)=|\{v\in V\cup\{s\}:\ \{x,v\}\in E \}|.
\end{align*}
A \emph{sandpile configuration} or simply a \emph{sandpile} on $G$ is a function $\sigma:V\rightarrow\N$ which gives the number of particles (or chips) sitting on each vertex that is not the sink vertex. We call $\sigma$ \emph{stable} if $\sigma(v)<\deg(v)$ for all $v\in V$; otherwise $\sigma$ is called \emph{unstable}. In this case, there is a vertex $v$ with $\sigma(v)\geq\deg(v)$, that is the pile of particles at $v$ is too large and it becomes unstable and collapses. This is modeled by \emph{toppling the sandpile} $\sigma$ at $v$ with the help of the \emph{toppling operation} 
defined as
\begin{align*}
    T_v\sigma=\sigma - \Delta\delta_v,
\end{align*}
where $\delta_v:V\rightarrow\{0,1\}$ is constantly $0$ except at $v$, where it takes the value $1$, and $\Delta\in\Z^{V\times V}$ is the graph Laplacian of $G$ defined as
\begin{small}
\begin{align*}
    \Delta(x,y)=\begin{cases}
        \deg(x) &\text{if}\ x=y,\\
        -1 &\text{if}\ x\sim y,\\
        0 &\text{otherwise}.
    \end{cases}
\end{align*}\end{small}
\noindent Toppling at $v$ means that all neighbours of $v$ get one particle from $v$ and the pile at $v$ decreases its height by $\deg(v)$. The toppling is called \emph{legal}
if $\sigma(v)\geq\deg(v)$, and \emph{illegal} otherwise. Given an unstable sandpile $\sigma$, there exists a sequence of legal topplings at vertices $v_1,...,v_n$ such that $T_{v_1}...T_{v_n}\sigma$ is stable and the sequence is unique up to permutations of the vertices. The \emph{Abelian property} of the sandpile model states that no matter the order of topplings, all legal sequences end up in the same stable sandpile configuration. We define the stabilization $\sigma^{\circ}$ of $\sigma$ by
\begin{align*}
    \sigma^\circ=T_{v_1}...T_{v_n}\sigma.
\end{align*}
\textbf{Sandpile Markov chains and recurrent sandpiles.}
Based on the dynamics described so far, we introduce a discrete time Markov chain whose state space is the set of stable sandpile configurations over $G$ and it evolves in time as following. Let $Y_1,Y_2,...$ be i.i.d.\ random variables uniformly distributed over $V$ and let $\sigma_0$ be any stable sandpile on $G$. For any $n\in\N$ we define
\begin{align*}
    \sigma_{n+1}=(\sigma_n+\delta_{Y_{n+1}})^\circ.
\end{align*}
The sequence $(\sigma_n)_{n\in\N}$ is a Markov chain, where given the state $\sigma_n$ at time $n$, in order to get to the state $\sigma_{n+1}$ one picks a vertex uniformly at random in $V$, places a chip on that vertex and stabilizes the new sandpile if unstable; the stabilized configuration is then $\sigma_{n+1}$. The sequence of random stable sandpiles $(\sigma_{n})_{n\in \N}$ is called \emph{the sandpile Markov chain} and its recurrent states form an Abelian group called \emph{the sandpile group} or \emph{the critical group} of $G$, denoted by $\mathcal{R}_G$. The group operation over $\mathcal{R}_G$ is given by $\oplus$ (pointwise addition of sandpiles followed by stabilization): for sandpiles  $\eta,\zeta$, the operation $\oplus$ is defined as
\begin{align*}
    \eta\oplus \zeta=(\eta+\zeta)^\circ.
\end{align*}
Since $(\sigma_n)_{n\in\N}$ is a random walk on a finite, Abelian group, it follows from \cite{random-walk-saloffe} that the stationary distribution is the uniform distribution on $\mathcal{R}_G$. We refer the reader once again to the survey on Abelian sandpiles \cite{jarai_sandpile_2018} and for more details on the convergence to the stationary distribution of sandpile chains to \cite{abel-sand-mix-pike-levine-jerison}.

\textbf{The sandpile group.}
The sandpile group $\mathcal{R}_G$ of the graph $G$ can also be described as a factor group over $\Z^V$ in the following manner. Two configurations $\eta,\zeta\in\Z^V$ are said to be equivalent if there exists $a\in\Z^V$ such that
\begin{align*}
    \eta=\zeta+\Delta a,
\end{align*}
and we write $\eta\approx\zeta$. In words, two configurations are equivalent if a (not necessarily legal) sequence of topplings and untopplings  merging the two configurations exists. Untoppling a given vertex means that all neighbours of the given vertex send exactly one particle to that vertex. We denote the equivalence class of $\eta$ with respect to the equivalence relation ''$\approx$''  by $[\eta]$. The group $\Z^V\slash\Delta\Z^V$ is then defined as the set of equivalence classes of this relation with group operation given by
\begin{align*}
    [\eta]+[\zeta]=[\eta+\zeta].
\end{align*}
We write $\order([\eta])$ for the order of the group element $[\eta]$. It can then be shown \cite[Lemma 2.13, Lemma 2.15]{Holroyd_2008} that every equivalence class contains exactly one recurrent sandpile and it holds
\begin{align*}
    \mathcal{R}_G\cong\Z^V\slash\Delta\Z^V.
\end{align*}
\textbf{Burning bijection.}
A nice property of the recurrent sandpiles $\mathcal{R}_G$ discovered by Dhar \cite{dhar_self-organized_1990} is that they are in bijection with the spanning trees of $G$ rooted at $s$. We recall that a spanning tree $T$ of $G$ is a connected subgraph of $G$, containing all vertices of $G$ and no loops. For $v\in V$ we denote by $E_v$ the set of edges incident to $v$ and define a total ordering $<_v$ on $E_v$. Given a spanning tree $T$ of $G$ we then define
\begin{align*}
    &a_T(v)=|\{\{v,y\}\in E_v:l_T(y)<l_T(v)-1\}|,\\
    &b_T(v)=|\{\{v,y\})\in E_v:l_T(y)=l_T(v)-1\text{ and }\{v,y\}<_v e_T(v)\}|,
\end{align*}
where $l_T(v)$ is the number of edges in the unique path from $v$ to $s$ in the spanning tree $T$ and $e_T(v)$ is the first edge in this path. Then the sandpile defined by
\begin{align*}
    \sigma_T(v)=\deg(v)-1-a_T(v)-b_T(v),
\end{align*}
is recurrent and the mapping given by $T\mapsto \sigma_T$ is bijective.

\textbf{Burning algorithm.}
Dhar's burning algorithm \cite{dhar_self-organized_1990} states that on any recurrent sandpile configuration adding one particle to each vertex connected with the sink and stabilizing leads to the same configuration and each vertex topples exactly once during stabilization.

\subsection{Infinite volume limit on Vicsek graphs}

From now on, we consider Abelian sandpiles on $\Vic$ and on its level $n$ approximation graphs $\Vic_n$, $n\in\N$.
On $\Vic_n$ we set the sink vertex $s_n$ to be the upper right corner, that is $s_n=(3^n,3^n)$, and we denote by $\mathcal{R}_n$ the sandpile group of $\Vic_n$ and by $\mu_n$ the uniform distribution over $\mathcal{R}_n$.
Furthermore, we denote by $\mathsf{UST}_n$ the random variable that is uniformly distributed over the set of spanning trees of $\Vic_n$ rooted at $s_n$; we call $\mathsf{UST}_n$ \emph{the uniform spanning tree} on $\Vic_n$. 
In view of \cite{lyons-usf}, the random variables $\mathsf{UST}_n$ converge weakly to a random variable $\mathsf{UST}$ 
taking values in the set of spanning forests of $\Vic$; we call $\mathsf{UST}$ 
\emph{the uniform spanning tree} on $\Vic$. As in the case of $\Z^2$, the uniform spanning forest on $\Vic$ consists of a single component almost surely, which is the reason for which we still call $\mathsf{UST}$ uniform spanning tree on $\Vic$. Furthermore, the uniform spanning tree on the Vicsek fractal graph is one-ended almost surely, meaning that all infinite paths on it intersect infinitely often. This follows from the single cut-point structure of the Vicsek graph, where the removal of a single cut-point disconnects the graph into a finite and an infinite part. In fact, the Vicsek fractal graph itself is already one-ended.
Similarly to the case of $\Z^2$ as treated in \cite{jarai-uniform-volume-limit} (see also \cite{one-ended}), and by using the one-endedness of the 
uniform spanning tree on $\Vic$, one can show that the sequence of uniform distributions $(\mu_n)_{n\in\N}$ over $\mathcal{R}_n$ converges weakly to a measure  $\mu$ supported on stable configurations $\mathcal{R}_{\Vic}$ on $\Vic$, and this measure is called the \emph{infinite volume limit measure} (shortly $\mathsf{IVL}$) of the Abelian sandpile model. It holds $ \mathcal{R}_\Vic=\{\eta\in\Z^\Vic : \forall n\in\N:\eta|_{\Vic_n}\in\mathcal{R}_{n}\}$.
By weak convergence of the sequence $(\mu_n)_{n\in\N}$ to the measure $\mu$ on $\mathcal{R_{\Vic}}$ we mean
that for all $\varepsilon>0$, all $n\in\N$, and all $\sigma\in\mathcal{R}_{n}$, there exists $N_{\varepsilon}\in\N$
such that for all $k\geq \max\{n,N_{\varepsilon}\}$ we have
\begin{align*}
   |\mu_k(\eta|_{\Vic_n}=\sigma)-\mu(\eta|_{\Vic_n}=\sigma)|<\varepsilon,
\end{align*}
where $\eta$ is sampled according to $\mu_k$ and $\mu$ respectively.

\begin{figure}[t]
    \centering
    \begin{tikzpicture}[scale=0.75]
        \node[shape=circle,draw=orange] (A) at (0,0) {};
        \node[shape=circle,draw=orange] (B) at (1,0) {};
        \node[shape=circle,draw=orange] (C) at (0,1) {};
        \node[shape=circle,draw=orange] (D) at (1,1) {};
        \draw[color=orange] (A) -- (B) -- (C) -- (D) -- (A) -- (C);
        \draw[color=orange] (B) -- (D);
         \node (A) at (0,0) {\small\color{blue} $o$};
        \begin{scope}[shift={(1,1)}]
            \node[shape=circle,draw=orange] (A) at (0,0) {};
            \node[shape=circle,draw=orange] (B) at (1,0) {};
            \node[shape=circle,draw=orange] (C) at (0,1) {};
            \node[shape=circle,draw=orange] (D) at (1,1) {};
            \draw[color=orange] (A) -- (B) -- (C) -- (D) -- (A) -- (C);
            \draw[color=orange] (B) -- (D);
        \end{scope}
        \begin{scope}[shift={(2,2)}]
            \node[shape=circle,draw=orange] (A) at (0,0) {};
            \node[shape=circle,draw=orange] (B) at (1,0) {};
            \node[shape=circle,draw=orange] (C) at (0,1) {};
            \node[shape=circle,draw=orange] (D) at (1,1) {};
            \draw[color=orange] (A) -- (B) -- (C) -- (D) -- (A) -- (C);
            \draw[color=orange] (B) -- (D);
        \end{scope}
        \begin{scope}[shift={(2,0)}]
            \node[shape=circle,draw=black] (A) at (0,0) {};
            \node[shape=circle,draw=black] (B) at (1,0) {};
            \node[shape=circle,draw=orange] (C) at (0,1) {};
            \node[shape=circle,draw=black] (D) at (1,1) {};
            \draw (A) -- (B) -- (C) -- (D) -- (A) -- (C);
            \draw (B) -- (D);
        \end{scope}
        \begin{scope}[shift={(0,2)}]
            \node[shape=circle,draw=black] (A) at (0,0) {};
            \node[shape=circle,draw=orange] (B) at (1,0) {};
            \node[shape=circle,draw=black] (C) at (0,1) {};
            \node[shape=circle,draw=black] (D) at (1,1) {};
            \draw (A) -- (B) -- (C) -- (D) -- (A) -- (C);
            \draw (B) -- (D);
        \end{scope}
        \begin{scope}[shift={(3,3)}]
            \node[shape=circle,draw=orange] (A) at (0,0) {};
            \node[shape=circle,draw=orange] (B) at (1,0) {};
            \node[shape=circle,draw=orange] (C) at (0,1) {};
            \node[shape=circle,draw=orange] (D) at (1,1) {};
            \draw[color=orange] (A) -- (B) -- (C) -- (D) -- (A) -- (C);
            \draw[color=orange] (B) -- (D);
            \begin{scope}[shift={(1,1)}]
                \node[shape=circle,draw=orange] (A) at (0,0) {};
                \node[shape=circle,draw=orange] (B) at (1,0) {};
                \node[shape=circle,draw=orange] (C) at (0,1) {};
                \node (E) at (0,1) {\small\color{red} $x$};
                \node[shape=circle,draw=orange] (D) at (1,1) {};
                \draw[color=orange] (A) -- (B) -- (C) -- (D) -- (A) -- (C);
                \draw[color=orange] (B) -- (D);
            \end{scope}
            \begin{scope}[shift={(2,2)}]
                \node[shape=circle,draw=orange] (A) at (0,0) {};
                \node[shape=circle,draw=orange] (B) at (1,0) {};
                \node[shape=circle,draw=orange] (C) at (0,1) {};
                \node[shape=circle,draw=orange] (D) at (1,1) {};
                \draw[color=orange] (A) -- (B) -- (C) -- (D) -- (A) -- (C);
                \draw[color=orange] (B) -- (D);
            \end{scope}
            \begin{scope}[shift={(2,0)}]
                \node[shape=circle,draw=black] (A) at (0,0) {};
                \node[shape=circle,draw=black] (B) at (1,0) {};
                \node[shape=circle,draw=orange] (C) at (0,1) {};
                \node[shape=circle,draw=black] (D) at (1,1) {};
                \draw (A) -- (B) -- (C) -- (D) -- (A) -- (C);
                \draw (B) -- (D);
            \end{scope}
            \begin{scope}[shift={(0,2)}]
                \node[shape=circle,draw=red] (A) at (0,0) {};
                \node[shape=circle,draw=orange] (B) at (1,0) {};
                \node[shape=circle,draw=red] (C) at (0,1) {};
                \node[shape=circle,draw=red] (D) at (1,1) {};
                \draw[color=red] (A) -- (B) -- (C) -- (D) -- (A) -- (C);
                \draw[color=red] (B) -- (D);
            \end{scope}
        \end{scope}
        \begin{scope}[shift={(6,6)}]
            \node[shape=circle,draw=orange] (A) at (0,0) {};
            \node[shape=circle,draw=orange] (B) at (1,0) {};
            \node[shape=circle,draw=orange] (C) at (0,1) {};
            \node[shape=circle,draw=orange] (D) at (1,1) {};
            \draw[color=orange] (A) -- (B) -- (C) -- (D) -- (A) -- (C);
            \draw[color=orange] (B) -- (D);
            \begin{scope}[shift={(1,1)}]
                \node[shape=circle,draw=orange] (A) at (0,0) {};
                \node[shape=circle,draw=orange] (B) at (1,0) {};
                \node[shape=circle,draw=orange] (C) at (0,1) {};
                \node[shape=circle,draw=orange] (D) at (1,1) {};
                \draw[color=orange] (A) -- (B) -- (C) -- (D) -- (A) -- (C);
                \draw[color=orange] (B) -- (D);
            \end{scope}
            \begin{scope}[shift={(2,2)}]
                \node[shape=circle,draw=orange] (A) at (0,0) {};
                \node[shape=circle,draw=orange] (B) at (1,0) {};
                \node[shape=circle,draw=orange] (C) at (0,1) {};
                \node[shape=circle,draw=orange] (D) at (1,1) {};
                \draw[color=orange] (A) -- (B) -- (C) -- (D) -- (A) -- (C);
                \draw[color=orange] (B) -- (D);
            \end{scope}
            \begin{scope}[shift={(2,0)}]
                \node[shape=circle,draw=black] (A) at (0,0) {};
                \node[shape=circle,draw=black] (B) at (1,0) {};
                \node[shape=circle,draw=orange] (C) at (0,1) {};
                \node[shape=circle,draw=black] (D) at (1,1) {};
                \draw (A) -- (B) -- (C) -- (D) -- (A) -- (C);
                \draw (B) -- (D);
            \end{scope}
            \begin{scope}[shift={(0,2)}]
                \node[shape=circle,draw=black] (A) at (0,0) {};
                \node[shape=circle,draw=orange] (B) at (1,0) {};
                \node[shape=circle,draw=black] (C) at (0,1) {};
                \node[shape=circle,draw=black] (D) at (1,1) {};
                \draw (A) -- (B) -- (C) -- (D) -- (A) -- (C);
                \draw (B) -- (D);
            \end{scope}
        \end{scope}
        \begin{scope}[shift={(0,6)}]
            \node[shape=circle,draw=red] (A) at (0,0) {};
            \node[shape=circle,draw=red] (B) at (1,0) {};
            \node[shape=circle,draw=red] (C) at (0,1) {};
            \node[shape=circle,draw=red] (D) at (1,1) {};
            \draw[color=red] (A) -- (B) -- (C) -- (D) -- (A) -- (C);
            \draw[color=red] (B) -- (D);
            \begin{scope}[shift={(1,1)}]
                \node[shape=circle,draw=red] (A) at (0,0) {};
                \node[shape=circle,draw=red] (B) at (1,0) {};
                \node[shape=circle,draw=red] (C) at (0,1) {};
                \node[shape=circle,draw=red] (D) at (1,1) {};
                \draw[color=red] (A) -- (B) -- (C) -- (D) -- (A) -- (C);
                \draw[color=red] (B) -- (D);
            \end{scope}
            \begin{scope}[shift={(2,2)}]
                \node[shape=circle,draw=red] (A) at (0,0) {};
                \node[shape=circle,draw=red] (B) at (1,0) {};
                \node[shape=circle,draw=red] (C) at (0,1) {};
                \node[shape=circle,draw=red] (D) at (1,1) {};
                \draw[color=red] (A) -- (B) -- (C) -- (D) -- (A) -- (C);
                \draw[color=red] (B) -- (D);
            \end{scope}
            \begin{scope}[shift={(2,0)}]
                \node[shape=circle,draw=red] (A) at (0,0) {};
                \node[shape=circle,draw=red] (B) at (1,0) {};
                \node[shape=circle,draw=red] (C) at (0,1) {};
                \node[shape=circle,draw=red] (D) at (1,1) {};
                \draw[color=red] (A) -- (B) -- (C) -- (D) -- (A) -- (C);
                \draw[color=red] (B) -- (D);
            \end{scope}
            \begin{scope}[shift={(0,2)}]
                \node[shape=circle,draw=red] (A) at (0,0) {};
                \node[shape=circle,draw=red] (B) at (1,0) {};
                \node[shape=circle,draw=red] (C) at (0,1) {};
                \node[shape=circle,draw=red] (D) at (1,1) {};
                \draw[color=red] (A) -- (B) -- (C) -- (D) -- (A) -- (C);
                \draw[color=red] (B) -- (D);
            \end{scope}
        \end{scope}
        \begin{scope}[shift={(6,0)}]
            \node[shape=circle,draw=black] (A) at (0,0) {};
            \node[shape=circle,draw=black] (B) at (1,0) {};
            \node[shape=circle,draw=black] (C) at (0,1) {};
            \node[shape=circle,draw=black] (D) at (1,1) {};
            \draw (A) -- (B) -- (C) -- (D) -- (A) -- (C);
            \draw (B) -- (D);
            \begin{scope}[shift={(1,1)}]
                \node[shape=circle,draw=black] (A) at (0,0) {};
                \node[shape=circle,draw=black] (B) at (1,0) {};
                \node[shape=circle,draw=black] (C) at (0,1) {};
                \node[shape=circle,draw=black] (D) at (1,1) {};
                \draw (A) -- (B) -- (C) -- (D) -- (A) -- (C);
                \draw (B) -- (D);
            \end{scope}
            \begin{scope}[shift={(2,2)}]
                \node[shape=circle,draw=black] (A) at (0,0) {};
                \node[shape=circle,draw=black] (B) at (1,0) {};
                \node[shape=circle,draw=black] (C) at (0,1) {};
                \node[shape=circle,draw=black] (D) at (1,1) {};
                \draw (A) -- (B) -- (C) -- (D) -- (A) -- (C);
                \draw (B) -- (D);
            \end{scope}
            \begin{scope}[shift={(2,0)}]
                \node[shape=circle,draw=black] (A) at (0,0) {};
                \node[shape=circle,draw=black] (B) at (1,0) {};
                \node[shape=circle,draw=black] (C) at (0,1) {};
                \node[shape=circle,draw=black] (D) at (1,1) {};
                \draw (A) -- (B) -- (C) -- (D) -- (A) -- (C);
                \draw (B) -- (D);
            \end{scope}
            \begin{scope}[shift={(0,2)}]
                \node[shape=circle,draw=black] (A) at (0,0) {};
                \node[shape=circle,draw=black] (B) at (1,0) {};
                \node[shape=circle,draw=black] (C) at (0,1) {};
                \node[shape=circle,draw=black] (D) at (1,1) {};
                \draw (A) -- (B) -- (C) -- (D) -- (A) -- (C);
                \draw (B) -- (D);
            \end{scope}
        \end{scope}
    \end{tikzpicture}
    \caption{Illustration of $\Vic_2$ with $o=(0,0)$, $\diag_2$ (orange), $x=(4,5)$ and $T^x_2$ (red).}
    \label{fig:vicsekLev2}
\end{figure}

\section{Explosion of sandpiles in infinite volume}\label{sec:nested-volume-method}

This section is dedicated to the proof of Theorem \ref{thm:vicsek-explosion}.~The first step is to consider the diagonal graph $D_n$ of $\Vic_n$ as in Definition \ref{defn:diagonal-graph}, and to show that sending a particle to the sink $s_n=(3^n,3^n)$ in $D_n$ is equivalent to sending a particle to the sink in $\Vic_n$. For every $i\in\N$, we write $K^i$ for the  4-complete subgraph on the diagonal, that is, the complete graph with vertex set given by
$\{ (i-1,i-1),(i-1,i),(i,i-1),(i,i)\}$.

\begin{defn}\label{defn:diagonal-graph} For every $n\in\N$, the \emph{diagonal graph} $D_n$ of $\Vic_n$ is $\diag_n = \cup_{i=1}^{3^n} K^i$ whose vertex set $V(D_n)$ is $V(D_n)=\{(x,y)\in V_n : |x-y|\leq 1 \}$. 
Furthermore we define the diagonal vertices and the $1$-offset diagonal vertices respectively by
 \begin{align*}
  \diag^0_n &= \{(x,y)\in V_n : x=y \}, \\
  \diag^1_n &= \{(x,y)\in V_n : |x-y|= 1 \},
 \end{align*}
 thus $V(D_n)=D_n^0\cup D_n^1$. For $x\in \diag^1_n$ we define $ T^x_n$ to be the
   connected  component of $x$ away from the diagonal, that is, $T_n^x$ is the subgraph of $\Vic_n$ spanned by the vertex set
$$V(T_n^x)=\{v\in V_n : x \in t \text{ for all simple paths $t$ connecting $v$ and } \diag_n \} \backslash \{x\}.$$
\end{defn}
In other words, $T^x_n$ is the connected component of $\Vic_n\backslash \{x\}$ not containing any diagonal vertices. See Figure \ref{fig:vicsekLev2} for an illustration of $\Vic_2$, $D_2$ and an example of $T^x_2$.
We prove first that adding one chip at $o=(0,0)$ to a recurrent configuration $\eta\in\mathcal{R}_n$ on $\Vic_n$ and stabilizing, yields a recurrent configuration which equals $\eta$ on $\Vic_n\setminus D_n$.

\textit{Convention.} We will often identify a graph with its vertex set, when no confusion arises. So $v\in T_n^x$ should be understood as $v$ being a vertex in the graph $T_n^x$, or $\eta|_{T^x_n}$ should be understood as the sandpile restricted to the vertex set of $T_n^x$.

\begin{lem}\label{lem:diagonalization}
For any $n\in\N$, $\eta\in\mathcal{R}_{n}$ and all $x\in D_n^1$ we have
\begin{align*}
\big(\eta + \delta_o \big)^\circ\big|_{T^x_n} = \eta\big|_{T^x_n} .
\end{align*}
\end{lem}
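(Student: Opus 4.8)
The plan is to exploit the cut-point structure of the Vicsek graph: for $x \in D_n^1$, the vertex $x$ is a single cut-point separating $T_n^x$ from the rest of $\Vic_n$ (which contains all diagonal vertices, in particular the sink $s_n$). So the only way particles can flow into or out of $T_n^x$ during stabilization is through topplings of $x$. My first step is to recall from Dhar's burning algorithm that $\eta$ recurrent means adding one chip to each vertex adjacent to the sink and stabilizing topples every vertex exactly once and returns $\eta$; I want to leverage a local version of this. Concretely, consider the stabilization of $\eta + \delta_o$. Since $o = (0,0) \in D_n^0$ lies on the diagonal and is separated from $T_n^x$ by the cut-point $x$, every particle that reaches $T_n^x$ must pass through $x$. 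I will argue that $x$ topples the same number of times as every vertex of $T_n^x$, or more precisely that the net effect of the avalanche on $T_n^x$ is that of an unchanged configuration.

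The key step is the following observation. Let $u_v$ denote the number of times vertex $v$ topples during the stabilization of $\eta + \delta_o$. Because the avalanche is finite and $x$ is the unique cut-point, the restriction of the toppling to $\overline{T_n^x} := T_n^x \cup \{x\}$ can be analyzed as a sandpile stabilization on the induced subgraph with $x$ playing the role of an additional source/sink. I would show $u_x$ equals $\min_{v \in T_n^x} u_v$, and then that in fact $u_v$ is constant on $T_n^x \cup \{x\}$: if some vertex in $T_n^x$ toppled strictly more than $x$, one could find a vertex in $T_n^x$ that toppled, received no extra chips from outside (all inflow to $T_n^x$ comes via $x$'s topplings), yet $\eta|_{T_n^x}$ was already stable — contradiction with the usual "least-action"/local stabilization argument (as in Holroyd et al.). Hence every vertex of $T_n^x \cup \{x\}$ topples the same number $k$ of times, so restricted to $T_n^x$ the configuration changes by $k$ times the full toppling vector of the component $\overline{T_n^x}$ (viewing $x$ as sink), which is $-\Delta_{\overline{T_n^x}}$ applied to the all-ones vector — this is zero on the interior $T_n^x$ because the graph Laplacian annihilates constants except at the boundary vertex $x$. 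Therefore $(\eta + \delta_o)^\circ|_{T_n^x} = \eta|_{T_n^x}$.

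An alternative, perhaps cleaner, route I would pursue in parallel is purely algebraic: adding $\delta_o$ changes the equivalence class in $\mathcal{R}_n \cong \Z^{V_n}/\Delta\Z^{V_n}$, but one can show that $\delta_o$ is equivalent, modulo $\Delta \Z^{V_n}$, to a configuration supported on $D_n^0$ together with some full-toppling corrections on each $T_n^x$ that leave $\eta|_{T_n^x}$ fixed; this again comes down to the fact that firing every vertex of $T_n^x \cup \{x\}$ once has no net effect on $T_n^x$. Combined with the uniqueness of the recurrent representative in each class, this gives the claim. The main obstacle I anticipate is making the "every vertex of $\overline{T_n^x}$ topples equally often" argument fully rigorous: one must rule out the scenario where $x$ fires, pushes chips into $T_n^x$, but $T_n^x$ does not fire back through $x$ enough to restore balance, which requires carefully tracking that $\eta|_{T_n^x}$ is a stable configuration on a graph where $x$ is a leaf-like boundary and invoking the abelian property together with a monotonicity/least-action principle. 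I expect this to be where the bulk of the careful bookkeeping lies, while the Laplacian-kills-constants computation at the end is routine.
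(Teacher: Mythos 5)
Your proposal is correct and follows essentially the same route as the paper: the cut point $x$ is the only conduit between $T_n^x$ and the rest of $\Vic_n$, so it acts as the sink of the component $T_n^x\cup\{x\}$, and Dhar's burning algorithm, applied locally using that $\eta|_{T_n^x}$ is recurrent on that component, shows that each toppling of $x$ makes every vertex of $T_n^x$ topple exactly once and restores the configuration. Your reformulation via equal toppling counts and the fact that the Laplacian annihilates constants on the interior is the same argument made slightly more explicit; the one place to be careful is that your least-action contradiction only yields the upper bound on toppling counts, while the lower bound (every vertex of $T_n^x$ topples at least as often as $x$) is precisely where the local recurrence/burning-algorithm input enters, as you yourself anticipated.
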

\begin{proof}
Since the only connection of $T^x_n$ to the sink is through vertex $x$, $x$ acts as a sink of $T^x_n$ and any $y\in T^x_n$ can only be toppled if $x$ was toppled before. In view of Dhar's burning algorithm \cite{dhar_self-organized_1990}, for each time $x$ topples, so does every $y\in T^x_n$ and after each $y$ has toppled the configuration on $T^x_n$ is the same as before the last time $x$ was toppled.
\end{proof}
Thus sandpile configurations remain invariant on $T^x_n$, and all particles that vertex $x$ sends to $T^x_n$ will end up back in $x$ after the stabilization on the connected component $T^x_n$.
So stabilizing the sandpile $\big(\eta + \delta_o \big)$ on $\Vic_n$ with sink $s_n=(3^n,3^n)$ is the same as stabilizing $\big(\eta + \delta_o \big)|_{\diag_n}$ on the diagonal subgraph $\diag_n$. Thus, we can ignore all $T^x_n$ and analyse instead what happens during the stabilization process on $\diag_n$, because the subgraphs $T_n^x$ act only as loops where the amount of mass that goes in is returned to the off-diagonal vertex $x\in D_n^1$, and is being pushed towards the upper right corner sink $s_n$.
The next lemma shows that if we take a recurrent configuration over $\Vic_n$ and add $4$ chips to the origin $o=(0,0)$  and stabilize, the stabilization is again $\eta$. This is equivalent with $4\delta_o$ being in the equivalence class of the identity element of the sandpile group $\mathcal{R}_n$.

\begin{lem}\label{lem:4-particles-to-origin}
For any $n\in\N$ and $\eta\in\mathcal{R}_{n}$ we have
\begin{align*}
\big(\eta + 4\cdot \delta_o \big)^\circ = \eta,
\end{align*}
and all vertices in $\Vic_n$ topple at least once.
\end{lem}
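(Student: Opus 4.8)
The plan is to produce, once and for all, an explicit vector $u\in\N^{V_n}$ with $\Delta_n u=4\delta_o$ and $u(v)\ge 1$ for every non-sink vertex $v$, and then to extract both claims from it. Granting such a $u$: the configuration $\eta+4\delta_o-\Delta_n u=\eta$ is stable, since $\eta\in\mathcal R_n$ is recurrent, hence stable; and $4\delta_o=\Delta_n u\in\Delta_n\Z^{V_n}$, so $[\eta+4\delta_o]=[\eta]$ in $\mathcal R_n\cong\Z^{V_n}/\Delta_n\Z^{V_n}$. Because $(\eta+4\delta_o)^\circ$ is again recurrent — adding chips to a recurrent configuration and stabilizing leaves it recurrent — and lies in the class $[\eta]$, which contains exactly one recurrent configuration, namely $\eta$, we conclude $(\eta+4\delta_o)^\circ=\eta$. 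Writing $\hat u\in\N^{V_n}$ for the odometer of $\eta+4\delta_o$ (the number of topplings at each vertex), we have $\eta=(\eta+4\delta_o)^\circ=\eta+4\delta_o-\Delta_n\hat u$, whence $\Delta_n\hat u=4\delta_o=\Delta_n u$; since $\Delta_n$ is invertible over $\Q$ (the graph is connected with a sink), $\hat u=u$, and $u\ge 1$ off the sink says precisely that every non-sink vertex topples at least once.

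So the whole problem is to build $u$. Here I would lean on the diagonal reduction of Lemma~\ref{lem:diagonalization}: by the convention that $u$ is constant on each hanging component $T_n^x$, it suffices to define $u$ on the diagonal graph $D_n$ and then put $u(v):=u(x)$ for $v\in T_n^x$, $x\in D_n^1$. Recall $D_n$ is the chain of complete graphs $K^1,\dots,K^{3^n}$ glued along the diagonal vertices $d_i:=(i,i)$, $0\le i\le 3^n$, where $K^i$ has the two further (off-diagonal) vertices $a_i:=(i-1,i)$ and $b_i:=(i,i-1)$. Imposing the balance equations that $u$ must satisfy dictates its shape: $(\Delta_n u)(a_i)=0$ forces $u(a_i)=u(b_i)=:m_i=\tfrac12\bigl(u(d_{i-1})+u(d_i)\bigr)$ (the symmetry of $a_i$ and $b_i$ makes the common value consistent); feeding this into $(\Delta_n u)(d_i)=0$ at an interior diagonal vertex gives $2u(d_i)=u(d_{i-1})+u(d_{i+1})$, so $\bigl(u(d_i)\bigr)_i$ is arithmetic; and the boundary data $u(d_{3^n})=u(s_n)=0$ together with $(\Delta_n u)(o)=4$ pin everything down to
\begin{align*}
u(d_i)=2(3^n-i),\qquad m_i=2(3^n-i)+1 ,
\end{align*}
extended by $u|_{T_n^x}\equiv u(x)$. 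One reads off $u\ge 1$ on all vertices except the sink, where $u=0$.

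What remains is the bookkeeping verification that this $u$ actually satisfies $\Delta_n u=4\delta_o$, checked vertex by vertex, and this is where the only real care is needed. At a vertex strictly inside a component $T_n^x$, $u$ is constant on the closed neighbourhood, so $\Delta_n u$ vanishes there; at an off-diagonal vertex $a_i$ that carries a hanging component, its $\deg_{\Vic_n}(a_i)-3$ neighbours in $T_n^{a_i}$ all carry the value $u(a_i)$, so those terms cancel against the surplus in $\deg_{\Vic_n}(a_i)$ and the identity collapses to $2m_i=u(d_{i-1})+u(d_i)$ — this is exactly the manifestation, at the level of the odometer, of the fact already used after Lemma~\ref{lem:diagonalization} that a hanging component returns to $a_i$ precisely the mass it is handed; at an interior diagonal vertex the identity collapses to the arithmetic-progression relation; at $o$ one gets $(\Delta_n u)(o)=2u(d_0)-2u(d_1)=4$; and $s_n$ is the sink. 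With $\Delta_n u=4\delta_o$ and $u\ge 1$ off the sink in hand, the two assertions of the lemma follow as in the first paragraph. The main obstacle is thus not conceptual but a careful treatment of the hanging components in the Laplacian balance — everything else is forced.
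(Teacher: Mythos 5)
Your argument is correct, but it follows a genuinely different route from the paper's. The paper reduces to the diagonal via Lemma~\ref{lem:diagonalization} and then argues dynamically: using the burning bijection it identifies recurrent configurations on $D_n$ with glued recurrent configurations on $K_4$, checks on the sixteen recurrent states of $K_4$ (Figure~\ref{fig:recurrentK4}) that four added chips leave each copy invariant while passing four chips onward, and propagates this along the diagonal. You instead exhibit an explicit integer vector $u$ with $\Delta u=4\delta_o$ and $u\geq 1$ off the sink (namely $u(d_i)=2(3^n-i)$ on the diagonal, $m_i=2(3^n-i)+1$ at the offset vertices, constant on each hanging component $T^x_n$), deduce $(\eta+4\delta_o)^\circ=\eta$ from $[\eta+4\delta_o]=[\eta]$ together with the uniqueness of the recurrent representative in each class of $\Z^{V_n}\slash\Delta\Z^{V_n}$ and the standard fact that stabilizing a recurrent configuration plus added chips is again recurrent, and then identify the odometer with $u$ via injectivity of the reduced Laplacian to conclude that every non-sink vertex topples. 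I verified your balance equations at every vertex type (interior of a hanging component, offset vertex with or without a hanging component, interior diagonal vertex, the origin where $(\Delta u)(o)=2u(d_0)-2u(d_1)=4$, and the neighbours of the sink), and they are all satisfied; note also that $m_i$ is automatically an integer since the $u(d_i)$ are even. What your approach buys is the exact odometer of the avalanche rather than merely the fact that it is positive, it avoids the $K_4$ enumeration entirely, and it transfers verbatim to $4\delta_x$ for any diagonal vertex $x$ (Corollary~\ref{cor:4particlesDiag}); the cost is a reliance on the quoted structural facts about $\Z^{V_n}\slash\Delta\Z^{V_n}$, whereas the paper's case analysis is self-contained modulo the burning algorithm.
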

\begin{figure}[hbt!]
    \centering
    \begin{subfigure}[h]{\textwidth}
        \centering
        \begin{tabular}{cccc}
              \begin{tikzpicture}
                 \node[shape=circle,draw=black] (A) at (0,0) {};
                    \node[shape=circle,draw=black] (B) at (1,0) {};
                    \node[shape=circle,draw=black] (C) at (0,1) {};
                    \node[shape=circle,draw=black] (D) at (1,1) {};
                    \draw (C) -- (A) -- (B) -- (D);
             \end{tikzpicture} & \begin{tikzpicture}
                 \node[shape=circle,draw=black] (A) at (0,0) {};
                    \node[shape=circle,draw=black] (B) at (1,0) {};
                    \node[shape=circle,draw=black] (C) at (0,1) {};
                    \node[shape=circle,draw=black] (D) at (1,1) {};
                    \draw (D) -- (C) -- (A) -- (B);
             \end{tikzpicture} & \begin{tikzpicture}
                 \node[shape=circle,draw=black] (A) at (0,0) {};
                    \node[shape=circle,draw=black] (B) at (1,0) {};
                    \node[shape=circle,draw=black] (C) at (0,1) {};
                    \node[shape=circle,draw=black] (D) at (1,1) {};
                    \draw (A) -- (C) -- (D) -- (B);
             \end{tikzpicture} & \begin{tikzpicture}
                 \node[shape=circle,draw=black] (A) at (0,0) {};
                    \node[shape=circle,draw=black] (B) at (1,0) {};
                    \node[shape=circle,draw=black] (C) at (0,1) {};
                    \node[shape=circle,draw=black] (D) at (1,1) {};
                    \draw (C) -- (D) -- (B) -- (A);
             \end{tikzpicture} \\
             \begin{tikzpicture}
                 \node[shape=circle,draw=black] (A) at (0,0) {};
                    \node[shape=circle,draw=black] (B) at (1,0) {};
                    \node[shape=circle,draw=black] (C) at (0,1) {};
                    \node[shape=circle,draw=black] (D) at (1,1) {};
                    \draw (C) -- (B) -- (A) -- (D);
             \end{tikzpicture} & \begin{tikzpicture}
                 \node[shape=circle,draw=black] (A) at (0,0) {};
                    \node[shape=circle,draw=black] (B) at (1,0) {};
                    \node[shape=circle,draw=black] (C) at (0,1) {};
                    \node[shape=circle,draw=black] (D) at (1,1) {};
                    \draw (C) -- (B) -- (D) -- (A);
             \end{tikzpicture} & \begin{tikzpicture}
                 \node[shape=circle,draw=black] (A) at (0,0) {};
                    \node[shape=circle,draw=black] (B) at (1,0) {};
                    \node[shape=circle,draw=black] (C) at (0,1) {};
                    \node[shape=circle,draw=black] (D) at (1,1) {};
                    \draw (A) -- (D) -- (C) -- (B);
             \end{tikzpicture} & \begin{tikzpicture}
                 \node[shape=circle,draw=black] (A) at (0,0) {};
                    \node[shape=circle,draw=black] (B) at (1,0) {};
                    \node[shape=circle,draw=black] (C) at (0,1) {};
                    \node[shape=circle,draw=black] (D) at (1,1) {};
                    \draw (B) -- (C) -- (A) -- (D);
             \end{tikzpicture} \\
             \begin{tikzpicture}
                 \node[shape=circle,draw=black] (A) at (0,0) {};
                    \node[shape=circle,draw=black] (B) at (1,0) {};
                    \node[shape=circle,draw=black] (C) at (0,1) {};
                    \node[shape=circle,draw=black] (D) at (1,1) {};
                    \draw (A) -- (C) -- (B) -- (D);
             \end{tikzpicture} & \begin{tikzpicture}
                 \node[shape=circle,draw=black] (A) at (0,0) {};
                    \node[shape=circle,draw=black] (B) at (1,0) {};
                    \node[shape=circle,draw=black] (C) at (0,1) {};
                    \node[shape=circle,draw=black] (D) at (1,1) {};
                    \draw (C) -- (A) -- (D) -- (B);
             \end{tikzpicture} & \begin{tikzpicture}
                 \node[shape=circle,draw=black] (A) at (0,0) {};
                    \node[shape=circle,draw=black] (B) at (1,0) {};
                    \node[shape=circle,draw=black] (C) at (0,1) {};
                    \node[shape=circle,draw=black] (D) at (1,1) {};
                    \draw (A) -- (B) -- (C) -- (D);
             \end{tikzpicture} & \begin{tikzpicture}
                 \node[shape=circle,draw=black] (A) at (0,0) {};
                    \node[shape=circle,draw=black] (B) at (1,0) {};
                    \node[shape=circle,draw=black] (C) at (0,1) {};
                    \node[shape=circle,draw=black] (D) at (1,1) {};
                    \draw (C) -- (D) -- (A) -- (B);
             \end{tikzpicture} \\
             \begin{tikzpicture}
                 \node[shape=circle,draw=black] (A) at (0,0) {};
                    \node[shape=circle,draw=black] (B) at (1,0) {};
                    \node[shape=circle,draw=black] (C) at (0,1) {};
                    \node[shape=circle,draw=black] (D) at (1,1) {};
                    \draw (A) -- (C) -- (D);
                    \draw (C) -- (B);
             \end{tikzpicture} & \begin{tikzpicture}
                 \node[shape=circle,draw=black] (A) at (0,0) {};
                    \node[shape=circle,draw=black] (B) at (1,0) {};
                    \node[shape=circle,draw=black] (C) at (0,1) {};
                    \node[shape=circle,draw=black] (D) at (1,1) {};
                    \draw (C) -- (D) -- (B);
                    \draw (A) -- (D);
             \end{tikzpicture} & \begin{tikzpicture}
                 \node[shape=circle,draw=black] (A) at (0,0) {};
                    \node[shape=circle,draw=black] (B) at (1,0) {};
                    \node[shape=circle,draw=black] (C) at (0,1) {};
                    \node[shape=circle,draw=black] (D) at (1,1) {};
                    \draw (A) -- (B) -- (D);
                    \draw (C) -- (B);
             \end{tikzpicture} & \begin{tikzpicture}
                 \node[shape=circle,draw=black] (A) at (0,0) {};
                    \node[shape=circle,draw=black] (B) at (1,0) {};
                    \node[shape=circle,draw=black] (C) at (0,1) {};
                    \node[shape=circle,draw=black] (D) at (1,1) {};
                    \draw (C) -- (A) -- (B);
                    \draw (D) -- (A);
             \end{tikzpicture}
        \end{tabular}
        \caption{Spanning trees}
        \label{fig:subfig:spanningtrees}
    \end{subfigure}\vspace{30pt}
    \begin{subfigure}[h]{\textwidth}
        \centering
        \begin{tabular}{cccc}
             \begin{tikzpicture}
                 \node[shape=circle,draw=black] (A) at (0,0) {$1$};
                    \node[shape=circle,draw=black] (B) at (1,0) {$2$};
                    \node[shape=circle,draw=black] (C) at (0,1) {$0$};
                    \node[shape=circle,draw=black] (D) at (1,1) {\phantom{$0$}};
                    \node (E) at (1,1) {s};
                    \draw (A) -- (B) -- (C) -- (D) -- (A) -- (C);
                    \draw (B) -- (D);
             \end{tikzpicture} & \begin{tikzpicture}
                 \node[shape=circle,draw=black] (A) at (0,0) {$1$};
                    \node[shape=circle,draw=black] (B) at (1,0) {$0$};
                    \node[shape=circle,draw=black] (C) at (0,1) {$2$};
                    \node[shape=circle,draw=black] (D) at (1,1) {\phantom{$0$}};
                    \node (E) at (1,1) {s};
                    \draw (A) -- (B) -- (C) -- (D) -- (A) -- (C);
                    \draw (B) -- (D);
             \end{tikzpicture} & \begin{tikzpicture}
                 \node[shape=circle,draw=black] (A) at (0,0) {$0$};
                    \node[shape=circle,draw=black] (B) at (1,0) {$2$};
                    \node[shape=circle,draw=black] (C) at (0,1) {$2$};
                    \node[shape=circle,draw=black] (D) at (1,1) {\phantom{$0$}};
                    \node (E) at (1,1) {s};
                    \draw (A) -- (B) -- (C) -- (D) -- (A) -- (C);
                    \draw (B) -- (D);
             \end{tikzpicture} & \begin{tikzpicture}
                 \node[shape=circle,draw=black] (A) at (0,0) {$1$};
                    \node[shape=circle,draw=black] (B) at (1,0) {$2$};
                    \node[shape=circle,draw=black] (C) at (0,1) {$2$};
                    \node[shape=circle,draw=black] (D) at (1,1) {\phantom{$0$}};
                    \node (E) at (1,1) {s};
                    \draw (A) -- (B) -- (C) -- (D) -- (A) -- (C);
                    \draw (B) -- (D);
             \end{tikzpicture} \\
             \begin{tikzpicture}
                 \node[shape=circle,draw=black] (A) at (0,0) {$2$};
                    \node[shape=circle,draw=black] (B) at (1,0) {$1$};
                    \node[shape=circle,draw=black] (C) at (0,1) {$0$};
                    \node[shape=circle,draw=black] (D) at (1,1) {\phantom{$0$}};
                    \node (E) at (1,1) {s};
                    \draw (A) -- (B) -- (C) -- (D) -- (A) -- (C);
                    \draw (B) -- (D);
             \end{tikzpicture} & \begin{tikzpicture}
                 \node[shape=circle,draw=black] (A) at (0,0) {$2$};
                    \node[shape=circle,draw=black] (B) at (1,0) {$2$};
                    \node[shape=circle,draw=black] (C) at (0,1) {$1$};
                    \node[shape=circle,draw=black] (D) at (1,1) {\phantom{$0$}};
                    \node (E) at (1,1) {s};
                    \draw (A) -- (B) -- (C) -- (D) -- (A) -- (C);
                    \draw (B) -- (D);
             \end{tikzpicture} & \begin{tikzpicture}
                 \node[shape=circle,draw=black] (A) at (0,0) {$2$};
                    \node[shape=circle,draw=black] (B) at (1,0) {$0$};
                    \node[shape=circle,draw=black] (C) at (0,1) {$2$};
                    \node[shape=circle,draw=black] (D) at (1,1) {\phantom{$0$}};
                    \node (E) at (1,1) {s};
                    \draw (A) -- (B) -- (C) -- (D) -- (A) -- (C);
                    \draw (B) -- (D);
             \end{tikzpicture} & \begin{tikzpicture}
                 \node[shape=circle,draw=black] (A) at (0,0) {$2$};
                    \node[shape=circle,draw=black] (B) at (1,0) {$0$};
                    \node[shape=circle,draw=black] (C) at (0,1) {$1$};
                    \node[shape=circle,draw=black] (D) at (1,1) {\phantom{$0$}};
                    \node (E) at (1,1) {s};
                    \draw (A) -- (B) -- (C) -- (D) -- (A) -- (C);
                    \draw (B) -- (D);
             \end{tikzpicture} \\
             \begin{tikzpicture}
                 \node[shape=circle,draw=black] (A) at (0,0) {$0$};
                    \node[shape=circle,draw=black] (B) at (1,0) {$2$};
                    \node[shape=circle,draw=black] (C) at (0,1) {$1$};
                    \node[shape=circle,draw=black] (D) at (1,1) {\phantom{$0$}};
                    \node (E) at (1,1) {s};
                    \draw (A) -- (B) -- (C) -- (D) -- (A) -- (C);
                    \draw (B) -- (D);
             \end{tikzpicture} & \begin{tikzpicture}
                 \node[shape=circle,draw=black] (A) at (0,0) {$2$};
                    \node[shape=circle,draw=black] (B) at (1,0) {$2$};
                    \node[shape=circle,draw=black] (C) at (0,1) {$0$};
                    \node[shape=circle,draw=black] (D) at (1,1) {\phantom{$0$}};
                    \node (E) at (1,1) {s};
                    \draw (A) -- (B) -- (C) -- (D) -- (A) -- (C);
                    \draw (B) -- (D);
             \end{tikzpicture} & \begin{tikzpicture}
                 \node[shape=circle,draw=black] (A) at (0,0) {$0$};
                    \node[shape=circle,draw=black] (B) at (1,0) {$1$};
                    \node[shape=circle,draw=black] (C) at (0,1) {$2$};
                    \node[shape=circle,draw=black] (D) at (1,1) {\phantom{$0$}};
                    \node (E) at (1,1) {s};
                    \draw (A) -- (B) -- (C) -- (D) -- (A) -- (C);
                    \draw (B) -- (D);
             \end{tikzpicture} & \begin{tikzpicture}
                 \node[shape=circle,draw=black] (A) at (0,0) {$2$};
                    \node[shape=circle,draw=black] (B) at (1,0) {$1$};
                    \node[shape=circle,draw=black] (C) at (0,1) {$2$};
                    \node[shape=circle,draw=black] (D) at (1,1) {\phantom{$0$}};
                    \node (E) at (1,1) {s};
                    \draw (A) -- (B) -- (C) -- (D) -- (A) -- (C);
                    \draw (B) -- (D);
             \end{tikzpicture} \\
             \begin{tikzpicture}
                 \node[shape=circle,draw=black] (A) at (0,0) {$1$};
                    \node[shape=circle,draw=black] (B) at (1,0) {$1$};
                    \node[shape=circle,draw=black] (C) at (0,1) {$2$};
                    \node[shape=circle,draw=black] (D) at (1,1) {\phantom{$0$}};
                    \node (E) at (1,1) {s};
                    \draw (A) -- (B) -- (C) -- (D) -- (A) -- (C);
                    \draw (B) -- (D);
             \end{tikzpicture} & \begin{tikzpicture}
                 \node[shape=circle,draw=black] (A) at (0,0) {$2$};
                    \node[shape=circle,draw=black] (B) at (1,0) {$2$};
                    \node[shape=circle,draw=black] (C) at (0,1) {$2$};
                    \node[shape=circle,draw=black] (D) at (1,1) {\phantom{$0$}};
                    \node (E) at (1,1) {s};
                    \draw (A) -- (B) -- (C) -- (D) -- (A) -- (C);
                    \draw (B) -- (D);
             \end{tikzpicture} & \begin{tikzpicture}
                 \node[shape=circle,draw=black] (A) at (0,0) {$1$};
                    \node[shape=circle,draw=black] (B) at (1,0) {$2$};
                    \node[shape=circle,draw=black] (C) at (0,1) {$1$};
                    \node[shape=circle,draw=black] (D) at (1,1) {\phantom{$0$}};
                    \node (E) at (1,1) {s};
                    \draw (A) -- (B) -- (C) -- (D) -- (A) -- (C);
                    \draw (B) -- (D);
             \end{tikzpicture} & \begin{tikzpicture}
                 \node[shape=circle,draw=black] (A) at (0,0) {$2$};
                    \node[shape=circle,draw=black] (B) at (1,0) {$1$};
                    \node[shape=circle,draw=black] (C) at (0,1) {$1$};
                    \node[shape=circle,draw=black] (D) at (1,1) {\phantom{$0$}};
                    \node (E) at (1,1) {s};
                    \draw (A) -- (B) -- (C) -- (D) -- (A) -- (C);
                    \draw (B) -- (D);
             \end{tikzpicture}
        \end{tabular}
        \caption{Recurrent sandpiles}
        \label{fig:y equals x}
     \end{subfigure}
    \caption{The spanning trees rooted at the sink and the corresponding recurrent sandpiles of $\Vic_0 = K_4$ with sink $s_0=(1,1)$ the upper right corner from the burning bijection.}
    \label{fig:recurrentK4}
\end{figure}
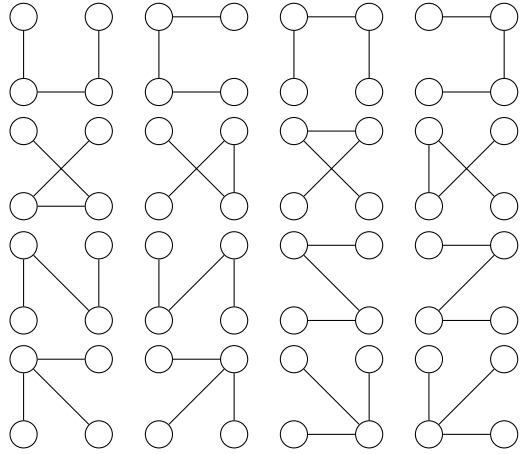
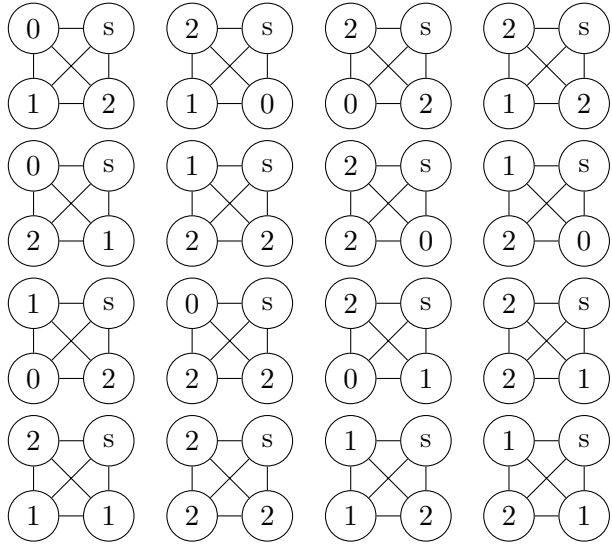
\begin{proof}
By Lemma \ref{lem:diagonalization} it is sufficient to consider only sandpiles on the diagonal $\diag_n$, and recall that if a vertex $x\in \diag^1_n$ topples, so does the whole connected component $T^x_n$. Since $\diag_n$ consists of $3^n$ copies of $K_4$ labeled by $K^{i}$, the spanning trees of $\diag_n$ consist of spanning trees of $K^{i}$ glued together, because the copies $K^{i}$ and $K^{i+1}$ share only the vertex $(i,i)$, for $i=1,\ldots,3^n$. The burning bijection implies that also the recurrent sandpiles on $\diag_n$ are recurrent sandpiles on $K^{i}$ glued together, while taking care of the common vertex $(i,i)$  by adding an additional three particles to it. We can now verify by considering all recurrent configurations on $K_4$ as seen in Figure \ref{fig:recurrentK4} that the addition of 4 particles followed by stabililzation leaves the configurations unchanged and sends 4 particles to the sink. Indeed via careful exploitation of symmetries it suffices to consider only a subcase of all configurations in Figure \ref{fig:recurrentK4}. By the burning algorithm, if the top right corner of any copy of $K_4$ is identified as the sink, the addition of 4 particles to the lower left corner of the same copy of $K_4$ and subsequent stabilisation is equivalent to the stabilisation on $K_4$ and leaves all the vertices up to this copy unchanged. The proof is now finished by carrying the 4 particles along the diagonal via subsequent stabilisation of all the copies of $K_4$ in the diagonal.
\end{proof}

It is straightforward to extend the argument from the proof of Lemma \ref{lem:4-particles-to-origin} and to place 4 particles on any diagonal vertex $x=(i,i)\in D_n^0$ and to stabilize. This process leaves again invariant the starting configuration, and the recursive proof would then start from the copy $K^i$ instead of $K^1$.
\begin{cor}\label{cor:4particlesDiag}
    For any $n\in\N$, $\eta\in\mathcal{R}_{n}$ and any $x\in D_n^0$ we have
    \begin{align*}
        \big(\eta + 4\cdot \delta_{x}\big)^\circ = \eta,
    \end{align*}
    and all vertices in $\Vic_n$ topple at least once.
\end{cor}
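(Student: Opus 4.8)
The plan is to mimic the proof of Lemma~\ref{lem:4-particles-to-origin}; the only genuinely new feature is that $x=(i,i)$ now sits in the \emph{interior} of the diagonal chain rather than at its bottom end, so the avalanche it triggers propagates in both directions. If $i=3^n$ then $x=s_n$ is the sink and there is nothing to prove, so assume $i<3^n$. Exactly as before, Lemma~\ref{lem:diagonalization} lets us discard the subtrees $T^y_n$ hanging off the off-diagonal vertices $y\in D_n^1$: each such subtree topples once for every toppling of its anchor $y$ and returns to $y$ all the mass it receives, so it suffices to analyze the avalanche on the diagonal graph $D_n=K^1\cup\cdots\cup K^{3^n}$, a chain of $K_4$-blocks glued consecutively at the diagonal vertices, with global sink $s_n=(3^n,3^n)$ the far corner of $K^{3^n}$. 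Here $x=(i,i)$ is the corner shared by $K^i$ and $K^{i+1}$.

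The key local ingredient, established in the proof of Lemma~\ref{lem:4-particles-to-origin} through the case analysis behind Figure~\ref{fig:recurrentK4}, is that dropping four chips on a corner of a single $K_4$-block whose opposite corner plays the role of a sink leaves the (recurrent) heights on that block unchanged, topples the dropped-on corner twice and the other two non-sink vertices once each, and pushes exactly four chips out through the opposite corner, irrespective of the block's configuration. Using this elementary step together with the chain-of-blocks structure of $D_n$, I would determine the total number $u(v)$ of topplings at each non-sink vertex $v$ as the unique solution of the linear system $\Delta_n u=4\delta_x$, where $\Delta_n$ denotes the Laplacian of $\Vic_n$. Solving this system block by block along the diagonal and tracking the cascade in both directions away from $x$, one finds that the diagonal values of $u$ are constant and equal to $2(3^n-i)$ on the vertices of $K^1,\dots,K^i$, decrease affinely to $0$ at $s_n$ along the diagonal of $K^{i+1},\dots,K^{3^n}$, and that each off-diagonal value is the arithmetic mean of its two diagonal neighbours; in particular $u$ is a non-negative integer vector with $u(v)\ge 1$ for every non-sink vertex $v$, and $\Delta_n u=4\delta_x$ holds by construction.

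From the identity $\Delta_n u=4\delta_x$ the conclusion is immediate. We have $\eta+4\delta_x=\eta+\Delta_n u\approx\eta$, so $\eta+4\delta_x$ lies in the equivalence class of $\eta$; since this class contains a unique recurrent configuration and $(\eta+4\delta_x)^\circ$ is recurrent — the recurrent configurations being closed under adding a chip and stabilizing — it follows that $(\eta+4\delta_x)^\circ=\eta$. The toppling vector actually performed during this stabilization is the unique solution of $\Delta_n v=(\eta+4\delta_x)-(\eta+4\delta_x)^\circ=4\delta_x$, namely $v=u\ge 1$, so every vertex of $D_n$ topples at least once; restoring the subtrees $T^y_n$ via Lemma~\ref{lem:diagonalization}, so does every vertex of $\Vic_n$. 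Running the same argument with $K^i$ replaced by the block containing any prescribed diagonal vertex is the extension of Lemma~\ref{lem:4-particles-to-origin} announced before the statement. I expect the bookkeeping of this two-sided avalanche to be the only real obstacle — one must verify that the block-by-block recursion genuinely governs the topplings and that its solution is integral and everywhere positive — and reducing everything to the single identity $\Delta_n u=4\delta_x$, instead of scheduling the topplings explicitly, is what keeps this under control.
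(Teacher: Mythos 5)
Your proposal is correct, but it reaches the conclusion by a genuinely different route than the paper. The paper disposes of Corollary~\ref{cor:4particlesDiag} in one line by rerunning the proof of Lemma~\ref{lem:4-particles-to-origin}: after the reduction to the diagonal via Lemma~\ref{lem:diagonalization}, it carries the four chips block by block from $K^{i}$ toward the sink, using the case inspection of the sixteen recurrent $K_4$ configurations in Figure~\ref{fig:recurrentK4}. You instead exhibit the odometer globally: you write down an explicit nonnegative integer vector $u$ with $\Delta u=4\delta_x$ (constant equal to $2(3^n-i)$ on the origin side of $x$, dropping by $2$ per block toward the sink, off-diagonal values the averages of their diagonal neighbours, and, implicitly, constant on each hanging subtree $T^y_n$ equal to the value at its anchor $y$ --- you should say this last point explicitly, since otherwise $\Delta u$ is not computed at the degree-$6$ off-diagonal vertices), deduce $\eta+4\delta_x\approx\eta$ and hence $(\eta+4\delta_x)^\circ=\eta$ from the uniqueness of the recurrent representative in each equivalence class, and then recover the actual toppling numbers as the unique solution of $\Delta v=4\delta_x$, which is $u\geq 1$. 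Your verification of $\Delta u=4\delta_x$ is only sketched (``one finds that\dots''), but it is a routine local computation and the stated formula is correct. What your approach buys is a cleaner treatment of the one point the paper glosses over --- the avalanche started at an interior diagonal vertex propagates in both directions, and it is the backward propagation that makes the vertices between the origin and $x$ topple --- together with the exact toppling count at every vertex, and it avoids the $K_4$ case analysis entirely; what it costs is having to guess and verify the global odometer rather than following the chips one block at a time.
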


\subsection{Nested volume Markov chain}

We sample now a stable sandpile $\eta\in\mathcal{R}_{\Vic}$ on the infinite Vicsek fractal graph from the $\mathsf{IVL}$ measure $\mu$  and we add one particle at the origin $o=(0,0)$. We recall that our ultimate goal is to understand if the sandpile $\eta +\delta_0$ stabilizes almost surely or not, and if not, to compute the stabilization probability.
Based on the previous two Lemmas, in order to understand if stabilization occurs,
it is enough to consider the particles each 4-complete graph $K^i$ passes on to the next copy $K^{i+1}$, when stabilizing  $(\eta+\delta_o)$ successively along the diagonal copies $D_n=\cup_{i=0}^{3^n}K^i$ in ascending order starting with $K^1$. In view of Lemma \ref{lem:diagonalization}, the sandpile on the connected components $T_n^x$ rooted at the 1-offset diagonal vertices $x\in D_n^1$ stays invariant during stabilization.
Due to the Abelian property of the toppling operations, stabilizing $\eta+\delta_o$ successively along the diagonal up to $K^i$ is the same as stabilizing $\eta+\delta_o$ on $\cup_{j=1}^i K^i$. This reasoning motivates the introduction of the sequence $(X_i)_{i\in\N}$ of random variables  that count the number of particles landed in the sink $(i,i)$: for $\eta\in\mathcal{R}_{\Vic}$ sampled from $\mu$, let $X_0=1$ and for any $i\geq 1$ define
\begin{align*}
    X_i = \#\big(\text{particles sent to $(i,i)$ during the stabilization of $\eta+\delta_o$ on $\cup_{j=1}^i K^j$ with sink $(i,i)$}\big).
\end{align*}
The transition from $X_i$ to $X_{i+1}$ depends only on the values of $\eta$ in $K^{i+1}$ and the current state $X_i$. From the burning algorithm together with the fact that the uniform spanning tree on the Vicsek fractal graph consists of uniformly chosen spanning trees on all copies of $K_4$ in the Vicsek fractal graph, it follows that the restrictions $(\eta|_{K^i})_{i\in \N_0}$ are i.i.d.~samples of recurrent sandpiles on the complete graph $K_4$ as shown in Figure \ref{fig:recurrentK4} with three additional particles at all cutpoints; see also the proof of Lemma \ref{lem:4-particles-to-origin}. This makes $(X_i)_{i\in \N_0}$ a time-homogeneous Markov chain with state space $\{0,1,2,3,4\}$, for which the one-step transition probabilities can be easily computed from the transitions of the recurrent sandpiles on $K_4$ as shown in Table \ref{tab:k4-transPob}. Figure \ref{fig:k4-transitions} shows  all the possible number of particles send to the upper right corner $(i,i)$ of $K^i$ (up to permutations) if the addition of one particle at the lower right corner $(i-1,i-1)$ of $K^i$ leads to a toppling. 

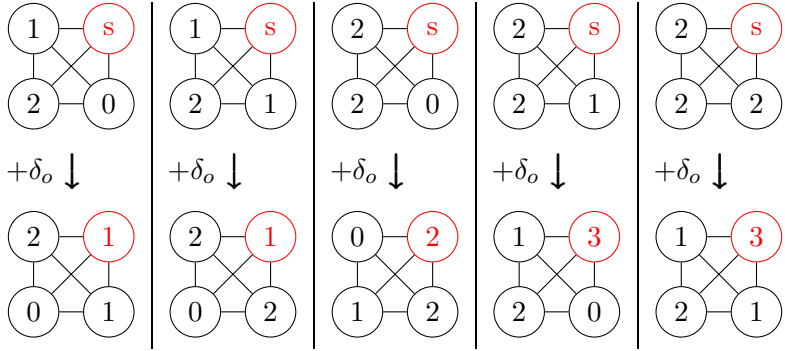
\begin{figure}[!h]
    \centering
    \begin{tabular}{c|c|c|c|c}
     \begin{tikzpicture}
                 \node[shape=circle,draw=black] (A) at (0,0) {$2$};
                    \node[shape=circle,draw=black] (B) at (1,0) {$0$};
                    \node[shape=circle,draw=black] (C) at (0,1) {$1$};
                    \node[shape=circle,draw=red] (D) at (1,1) {\phantom{$0$}};
                    \node (E) at (1,1) {\textcolor{red}{s}};
                    \draw (A) -- (B) -- (C) -- (D) -- (A) -- (C);
                    \draw (B) -- (D);
             \end{tikzpicture} & \begin{tikzpicture}
                 \node[shape=circle,draw=black] (A) at (0,0) {$2$};
                    \node[shape=circle,draw=black] (B) at (1,0) {$1$};
                    \node[shape=circle,draw=black] (C) at (0,1) {$1$};
                    \node[shape=circle,draw=red] (D) at (1,1) {\phantom{$0$}};
                    \node (E) at (1,1) {\textcolor{red}{s}};
                    \draw (A) -- (B) -- (C) -- (D) -- (A) -- (C);
                    \draw (B) -- (D);
             \end{tikzpicture} & \begin{tikzpicture}
                 \node[shape=circle,draw=black] (A) at (0,0) {$2$};
                    \node[shape=circle,draw=black] (B) at (1,0) {$0$};
                    \node[shape=circle,draw=black] (C) at (0,1) {$2$};
                    \node[shape=circle,draw=red] (D) at (1,1) {\phantom{$0$}};
                    \node (E) at (1,1) {\textcolor{red}{s}};
                    \draw (A) -- (B) -- (C) -- (D) -- (A) -- (C);
                    \draw (B) -- (D);
             \end{tikzpicture} & \begin{tikzpicture}
                 \node[shape=circle,draw=black] (A) at (0,0) {$2$};
                    \node[shape=circle,draw=black] (B) at (1,0) {$1$};
                    \node[shape=circle,draw=black] (C) at (0,1) {$2$};
                    \node[shape=circle,draw=red] (D) at (1,1) {\phantom{$0$}};
                    \node (E) at (1,1) {\textcolor{red}{s}};
                    \draw (A) -- (B) -- (C) -- (D) -- (A) -- (C);
                    \draw (B) -- (D);
             \end{tikzpicture} & \begin{tikzpicture}
                 \node[shape=circle,draw=black] (A) at (0,0) {$2$};
                    \node[shape=circle,draw=black] (B) at (1,0) {$2$};
                    \node[shape=circle,draw=black] (C) at (0,1) {$2$};
                    \node[shape=circle,draw=red] (D) at (1,1) {\phantom{$0$}};
                    \node (E) at (1,1) {\textcolor{red}{s}};
                    \draw (A) -- (B) -- (C) -- (D) -- (A) -- (C);
                    \draw (B) -- (D);
             \end{tikzpicture} \\[5pt]
             $+\delta_o\ \pmb{\big\downarrow}$ \phantom{shif}& 
             $+\delta_o\ \pmb{\big\downarrow}$ \phantom{shif}& 
             $+\delta_o\ \pmb{\big\downarrow}$ \phantom{shif}& 
             $+\delta_o\ \pmb{\big\downarrow}$ \phantom{shif}& 
             $+\delta_o\ \pmb{\big\downarrow}$ \phantom{shif} \\[8pt]
        \begin{tikzpicture}
                 \node[shape=circle,draw=black] (A) at (0,0) {$0$};
                    \node[shape=circle,draw=black] (B) at (1,0) {$1$};
                    \node[shape=circle,draw=black] (C) at (0,1) {$2$};
                    \node[shape=circle,draw=red] (D) at (1,1) {\textcolor{red}{$1$}};
                    \draw (A) -- (B) -- (C) -- (D) -- (A) -- (C);
                    \draw (B) -- (D);
             \end{tikzpicture} & \begin{tikzpicture}
                 \node[shape=circle,draw=black] (A) at (0,0) {$0$};
                    \node[shape=circle,draw=black] (B) at (1,0) {$2$};
                    \node[shape=circle,draw=black] (C) at (0,1) {$2$};
                    \node[shape=circle,draw=red] (D) at (1,1) {\textcolor{red}{$1$}};
                    \draw (A) -- (B) -- (C) -- (D) -- (A) -- (C);
                    \draw (B) -- (D);
             \end{tikzpicture} & \begin{tikzpicture}
                 \node[shape=circle,draw=black] (A) at (0,0) {$1$};
                    \node[shape=circle,draw=black] (B) at (1,0) {$2$};
                    \node[shape=circle,draw=black] (C) at (0,1) {$0$};
                    \node[shape=circle,draw=red] (D) at (1,1) {\textcolor{red}{$2$}};
                    \draw (A) -- (B) -- (C) -- (D) -- (A) -- (C);
                    \draw (B) -- (D);
             \end{tikzpicture} & \begin{tikzpicture}
                 \node[shape=circle,draw=black] (A) at (0,0) {$2$};
                    \node[shape=circle,draw=black] (B) at (1,0) {$0$};
                    \node[shape=circle,draw=black] (C) at (0,1) {$1$};
                    \node[shape=circle,draw=red] (D) at (1,1) {\textcolor{red}{$3$}};
                    \draw (A) -- (B) -- (C) -- (D) -- (A) -- (C);
                    \draw (B) -- (D);
             \end{tikzpicture} & \begin{tikzpicture}
                 \node[shape=circle,draw=black] (A) at (0,0) {$2$};
                    \node[shape=circle,draw=black] (B) at (1,0) {$1$};
                    \node[shape=circle,draw=black] (C) at (0,1) {$1$};
                    \node[shape=circle,draw=red] (D) at (1,1) {\textcolor{red}{$3$}};
                    \draw (A) -- (B) -- (C) -- (D) -- (A) -- (C);
                    \draw (B) -- (D);
             \end{tikzpicture} 
        \end{tabular}
    \caption{All recurrent sandpile configurations of $K_4$ up to permutations of vertices (top) and their transitions after adding one particle to the lower left vertex of height $2$ and the corresponding collected particles in the sink $s$ (bottom).}
    \label{fig:k4-transitions}
\end{figure}
These one-step transitions and their corresponding probabilities can be used to calculate the number of particles the copy $K^i$ passes on to $K^{i+1}$ in the course of stabilization of $\eta+\delta_o$. For any $i\in\N$, if the sink $(i,i)$ of $K^i$ does not receive any particles from $K^{i-1}$, then it cannot pass any particles to the $K^{i+1}$, so $0$ is an absorbing state of $(X_i)_{i\in\N_0}$. On the other hand, according to Corollary \ref{cor:4particlesDiag}, if $K^i$ receives $4$ particles, it will also pass on $4$ particles. Therefore $4$ is an absorbing state as well. For the cases of $1$, $2$, and $3$ particles with the help of  Figure \ref{fig:k4-transitions}, we collect the results  in Table \ref{tab:k4-transPob}.
\begin{table}[]
    \centering
    \begin{tabular}{c||c|c|c}
     configuration 
     & \begin{tikzpicture}[scale=0.6, transform shape]
                 \node[shape=circle,draw=black] (A) at (0,0) {$1$};
                    \node[shape=circle,draw=black] (B) at (1,0) {$2$};
                    \node[shape=circle,draw=black] (C) at (0,1) {$0$};
                    \node[shape=circle,draw=red] (D) at (1,1) {\phantom{$0$}};
                    \node (E) at (1,1) {\textcolor{red}{s}};
                    \draw (A) -- (B) -- (C) -- (D) -- (A) -- (C);
                    \draw (B) -- (D);
             \end{tikzpicture}, \begin{tikzpicture}[scale=0.6, transform shape]
                 \node[shape=circle,draw=black] (A) at (0,0) {$1$};
                    \node[shape=circle,draw=black] (B) at (1,0) {$0$};
                    \node[shape=circle,draw=black] (C) at (0,1) {$2$};
                    \node[shape=circle,draw=red] (D) at (1,1) {\phantom{$0$}};
                    \node (E) at (1,1) {\textcolor{red}{s}};
                    \draw (A) -- (B) -- (C) -- (D) -- (A) -- (C);
                    \draw (B) -- (D);
             \end{tikzpicture}
    & \begin{tikzpicture}[scale=0.6, transform shape]
                 \node[shape=circle,draw=black] (A) at (0,0) {$2$};
                    \node[shape=circle,draw=black] (B) at (1,0) {$1$};
                    \node[shape=circle,draw=black] (C) at (0,1) {$0$};
                    \node[shape=circle,draw=red] (D) at (1,1) {\phantom{$0$}};
                    \node (E) at (1,1) {\textcolor{red}{s}};
                    \draw (A) -- (B) -- (C) -- (D) -- (A) -- (C);
                    \draw (B) -- (D);
             \end{tikzpicture}, \begin{tikzpicture}[scale=0.6, transform shape]
                 \node[shape=circle,draw=black] (A) at (0,0) {$2$};
                    \node[shape=circle,draw=black] (B) at (1,0) {$0$};
                    \node[shape=circle,draw=black] (C) at (0,1) {$1$};
                    \node[shape=circle,draw=red] (D) at (1,1) {\phantom{$0$}};
                    \node (E) at (1,1) {\textcolor{red}{s}};
                    \draw (A) -- (B) -- (C) -- (D) -- (A) -- (C);
                    \draw (B) -- (D);
             \end{tikzpicture}, \begin{tikzpicture}[scale=0.6, transform shape]
                 \node[shape=circle,draw=black] (A) at (0,0) {$2$};
                    \node[shape=circle,draw=black] (B) at (1,0) {$1$};
                    \node[shape=circle,draw=black] (C) at (0,1) {$1$};
                    \node[shape=circle,draw=red] (D) at (1,1) {\phantom{$0$}};
                    \node (E) at (1,1) {\textcolor{red}{s}};
                    \draw (A) -- (B) -- (C) -- (D) -- (A) -- (C);
                    \draw (B) -- (D);
             \end{tikzpicture}
    & \begin{tikzpicture}[scale=0.6, transform shape]
                 \node[shape=circle,draw=black] (A) at (0,0) {$1$};
                    \node[shape=circle,draw=black] (B) at (1,0) {$1$};
                    \node[shape=circle,draw=black] (C) at (0,1) {$2$};
                    \node[shape=circle,draw=red] (D) at (1,1) {\phantom{$0$}};
                    \node (E) at (1,1) {\textcolor{red}{s}};
                    \draw (A) -- (B) -- (C) -- (D) -- (A) -- (C);
                    \draw (B) -- (D);
             \end{tikzpicture}, \begin{tikzpicture}[scale=0.6, transform shape]
                 \node[shape=circle,draw=black] (A) at (0,0) {$1$};
                    \node[shape=circle,draw=black] (B) at (1,0) {$2$};
                    \node[shape=circle,draw=black] (C) at (0,1) {$1$};
                    \node[shape=circle,draw=red] (D) at (1,1) {\phantom{$0$}};
                    \node (E) at (1,1) {\textcolor{red}{s}};
                    \draw (A) -- (B) -- (C) -- (D) -- (A) -- (C);
                    \draw (B) -- (D);
             \end{tikzpicture}, \begin{tikzpicture}[scale=0.6, transform shape]
                 \node[shape=circle,draw=black] (A) at (0,0) {$1$};
                    \node[shape=circle,draw=black] (B) at (1,0) {$2$};
                    \node[shape=circle,draw=black] (C) at (0,1) {$2$};
                    \node[shape=circle,draw=red] (D) at (1,1) {\phantom{$0$}};
                    \node (E) at (1,1) {\textcolor{red}{s}};
                    \draw (A) -- (B) -- (C) -- (D) -- (A) -- (C);
                    \draw (B) -- (D);
             \end{tikzpicture}\\ \hline
     added 
     & \begin{tabular}{c|c|c}
          1 & 2 & 3 
          \end{tabular}
    & \begin{tabular}{c|c|c}
          1 & 2 & 3 
          \end{tabular}
    & \begin{tabular}{c|c|c}
          1 & 2 & 3 
          \end{tabular}\\ \hline
     collected 
     & \begin{tabular}{c|c|c}
          0 & 2 & 2 
          \end{tabular}
    & \begin{tabular}{c|c|c}
          1 & 1 & 1 
          \end{tabular}
    & \begin{tabular}{c|c|c}
          0 & 3 & 4 
          \end{tabular} \\ \hline \hline & & & \\[-1pt]
    configuration 
    & \begin{tikzpicture}[scale=0.6, transform shape]
                 \node[shape=circle,draw=black] (A) at (0,0) {$2$};
                    \node[shape=circle,draw=black] (B) at (1,0) {$2$};
                    \node[shape=circle,draw=black] (C) at (0,1) {$0$};
                    \node[shape=circle,draw=red] (D) at (1,1) {\phantom{$0$}};
                    \node (E) at (1,1) {\textcolor{red}{s}};
                    \draw (A) -- (B) -- (C) -- (D) -- (A) -- (C);
                    \draw (B) -- (D);
             \end{tikzpicture}, \begin{tikzpicture}[scale=0.6, transform shape]
                 \node[shape=circle,draw=black] (A) at (0,0) {$2$};
                    \node[shape=circle,draw=black] (B) at (1,0) {$0$};
                    \node[shape=circle,draw=black] (C) at (0,1) {$2$};
                    \node[shape=circle,draw=red] (D) at (1,1) {\phantom{$0$}};
                    \node (E) at (1,1) {\textcolor{red}{s}};
                    \draw (A) -- (B) -- (C) -- (D) -- (A) -- (C);
                    \draw (B) -- (D);
             \end{tikzpicture}
    & \begin{tikzpicture}[scale=0.6, transform shape]
                 \node[shape=circle,draw=black] (A) at (0,0) {$0$};
                    \node[shape=circle,draw=black] (B) at (1,0) {$1$};
                    \node[shape=circle,draw=black] (C) at (0,1) {$2$};
                    \node[shape=circle,draw=red] (D) at (1,1) {\phantom{$0$}};
                    \node (E) at (1,1) {\textcolor{red}{s}};
                    \draw (A) -- (B) -- (C) -- (D) -- (A) -- (C);
                    \draw (B) -- (D);
             \end{tikzpicture}, \begin{tikzpicture}[scale=0.6, transform shape]
                 \node[shape=circle,draw=black] (A) at (0,0) {$0$};
                    \node[shape=circle,draw=black] (B) at (1,0) {$2$};
                    \node[shape=circle,draw=black] (C) at (0,1) {$1$};
                    \node[shape=circle,draw=red] (D) at (1,1) {\phantom{$0$}};
                    \node (E) at (1,1) {\textcolor{red}{s}};
                    \draw (A) -- (B) -- (C) -- (D) -- (A) -- (C);
                    \draw (B) -- (D);
             \end{tikzpicture}, \begin{tikzpicture}[scale=0.6, transform shape]
                 \node[shape=circle,draw=black] (A) at (0,0) {$0$};
                    \node[shape=circle,draw=black] (B) at (1,0) {$2$};
                    \node[shape=circle,draw=black] (C) at (0,1) {$2$};
                    \node[shape=circle,draw=red] (D) at (1,1) {\phantom{$0$}};
                    \node (E) at (1,1) {\textcolor{red}{s}};
                    \draw (A) -- (B) -- (C) -- (D) -- (A) -- (C);
                    \draw (B) -- (D);
             \end{tikzpicture}
    & \begin{tikzpicture}[scale=0.6, transform shape]
                 \node[shape=circle,draw=black] (A) at (0,0) {$2$};
                    \node[shape=circle,draw=black] (B) at (1,0) {$2$};
                    \node[shape=circle,draw=black] (C) at (0,1) {$2$};
                    \node[shape=circle,draw=red] (D) at (1,1) {\phantom{$0$}};
                    \node (E) at (1,1) {\textcolor{red}{s}};
                    \draw (A) -- (B) -- (C) -- (D) -- (A) -- (C);
                    \draw (B) -- (D);
             \end{tikzpicture}, \begin{tikzpicture}[scale=0.6, transform shape]
                 \node[shape=circle,draw=black] (A) at (0,0) {$2$};
                    \node[shape=circle,draw=black] (B) at (1,0) {$2$};
                    \node[shape=circle,draw=black] (C) at (0,1) {$1$};
                    \node[shape=circle,draw=red] (D) at (1,1) {\phantom{$0$}};
                    \node (E) at (1,1) {\textcolor{red}{s}};
                    \draw (A) -- (B) -- (C) -- (D) -- (A) -- (C);
                    \draw (B) -- (D);
             \end{tikzpicture}, \begin{tikzpicture}[scale=0.6, transform shape]
                 \node[shape=circle,draw=black] (A) at (0,0) {$2$};
                    \node[shape=circle,draw=black] (B) at (1,0) {$1$};
                    \node[shape=circle,draw=black] (C) at (0,1) {$2$};
                    \node[shape=circle,draw=red] (D) at (1,1) {\phantom{$0$}};
                    \node (E) at (1,1) {\textcolor{red}{s}};
                    \draw (A) -- (B) -- (C) -- (D) -- (A) -- (C);
                    \draw (B) -- (D);
             \end{tikzpicture}
    \\ \hline
    added
    & \begin{tabular}{c|c|c}
          1 & 2 & 3 
          \end{tabular}
    & \begin{tabular}{c|c|c}
          1 & 2 & 3 
          \end{tabular}
    & \begin{tabular}{c|c|c}
          1 & 2 & 3 
          \end{tabular}\\ \hline
    collected 
    & \begin{tabular}{c|c|c}
          2 & 2 & 4 
          \end{tabular}
    & \begin{tabular}{c|c|c}
          0 & 0 & 3 
          \end{tabular}
    & \begin{tabular}{c|c|c}
          3 & 4 & 4 
          \end{tabular}
\end{tabular}
    \caption{The number of collected particles in the sink for all recurrent configurations of $K_4$ depending on the number of added particles in the bottom left vertex.}
    \label{tab:k4-transPob}
\end{table}
Since all recurrent configurations in Table \ref{tab:k4-transPob} are equally likely, the transition matrix of $(X_i)_{i\in \N_0}$ is given by
\begin{align*}
    P = \begin{pmatrix}
        1 & 0 & 0 & 0 & 0 \\
        1/2 & 3/16 & 2/16 & 3/16 & 0 \\
        3/16 & 3/16 & 1/4 & 3/16 & 3/16 \\
        0 & 3/16 & 2/16 & 3/16 & 1/2 \\
        0 & 0 & 0 & 0 & 1
    \end{pmatrix}.
\end{align*}
The proof of the main result is based on investigating the absorbing states of the chain $(X_i)_{i\in\N}$.
\begin{proof}[Proof of Theorem \ref{thm:vicsek-explosion}]
We sample $\eta$ from to the $\mathsf{IVL}$ measure $\mu$, we add one particle at $o$ and consider the corresponding Markov chain $(X_i)_{i\in \N_0}$ starting with $X_0=1$. The starting constraint of $X_0=1$ means the addition of a single particle at $o=(0,0)$ to $\eta$. Recall that $X_i$ gives the number of particles stopped at the sink $(i,i)$ 
during the stabilization of $\eta+\delta_o$ on $\cup_{j=1}^iK^j$. Since during the stabilization of $\eta+\delta_o$ the sandpile on subgraphs rooted at 1-offset diagonal vertices stays invariant (thus stable), and on $\Vic\setminus\cup_{j=1}^iK^j $ the sandpile $\eta+\delta_o$ is already stable, this implies that the event of stabilization of $\eta+\delta_o$ is equivalent to $(X_i)_{i\in\N_0}$ ever reaching the state $0$. For $x\in\{0,1,2,3,4\}$, denote by $\tau_x$ the first time the Markov chain $(X_i)_{i\in \N}$ reaches the state $x$, that is $\tau_x$ is given by
$$\tau_x = \inf\{i\geq 0 | X_i = x \},$$ 
and let $x_k = \Prob(\tau_0 < \tau_4 \ | \ X_0 = k)$ for $k=0,\ldots,4$. Clearly it holds that $x_0=1$ and $x_4=0$. Denote by $p_{kj}=\Prob(X_{i+1}=j|X_i=k)$ the one-step transition probabilities of the chain $(X_i)_{i\in\N}$, for $j,k\in \{0,1,2,3,4\}$.
Factorizing with respect to the first step  yields 
\begin{align*}
 x_k = \sum_j p_{kj} \Prob(\tau_0 < \tau_4 \ | \ X_1 = j) = \sum_j p_{kj} x_j, \quad \text{for } k=1,2,3,
\end{align*}
and this equation can be written in matrix form as 
\begin{align*}
 (P-I)x = 0,
\end{align*}
where the vector $x$ is given by $x=(x_0,\ldots,x_4)^T$. The unique non-zero solution of the previous equation is given by $x=(1,\ 3/4,\ 1/2, \ 1/4,\ 0)^T$. Using that
 \begin{align*}
\mu(\eta + \delta_0 \text{ stabilizes}) = \Prob(\tau_0 < \tau_4 \ | \ X_0=1) = \frac{3}{4},
\end{align*}
yields the claim.
\end{proof}
Finally, taking the $k$-th powers of the transition matrix $P$ yields the $k$-step transition probabilities of $(X_i)_{i\in\N_0}$: for $k\in \N$ 
\begin{align*}
    \begin{pmatrix}
        \Prob(X_k=0 | X_0 = 1) \\
        \Prob(X_k=1 | X_0 = 1) \\
        \Prob(X_k=2 | X_0 = 1) \\
        \Prob(X_k=3 | X_0 = 1) \\
        \Prob(X_k=4 | X_0 = 1)
    \end{pmatrix}&=            
     \begin{pmatrix}
         \frac{3}{4} & -\frac{13 + 3 \sqrt{13}}{52} & - \frac{13 - 3 \sqrt{13}}{52} \\
         0 & \frac{13+\sqrt{13}}{52} & \frac{13-\sqrt{13}}{52} \\
         0 & \frac{4\sqrt{13}}{52} & -\frac{4\sqrt{13}}{52} \\
         0 & \frac{13+\sqrt{13}}{52} & -\frac{13-\sqrt{13}}{52} \\
         \frac{1}{4} & -\frac{13+3\sqrt{13}}{52} & \frac{13-3\sqrt{13}}{52}
     \end{pmatrix}\begin{pmatrix}
         1 \\
         \Big(\frac{5 + \sqrt{13}}{16}\Big)^k \\
         \Big(\frac{5 - \sqrt{13}}{16}\Big)^k
     \end{pmatrix},\\
    \begin{pmatrix}
        \Prob(X_k=0 | X_0 = 2) \\
        \Prob(X_k=1 | X_0 = 2) \\
        \Prob(X_k=2 | X_0 = 2) \\
        \Prob(X_k=3 | X_0 = 2) \\
        \Prob(X_k=4 | X_0 = 2)
    \end{pmatrix}&=            
     \begin{pmatrix}
         \frac{1}{2} & -\frac{13 + 5 \sqrt{13}}{52} & - \frac{13 - 5 \sqrt{13}}{52} \\
         0 & \frac{6\sqrt{13}}{52} & -\frac{6\sqrt{13}}{52} \\
         0 & 2\frac{13-\sqrt{13}}{52} & 2\frac{13+\sqrt{13}}{52}  \\
         0 & \frac{6\sqrt{13}}{52} & -\frac{6\sqrt{13}}{52} \\
         \frac{1}{2} & -\frac{13 + 5 \sqrt{13}}{52} & - \frac{13 - 5 \sqrt{13}}{52}
     \end{pmatrix}\begin{pmatrix}
         1 \\
         \Big(\frac{5 + \sqrt{13}}{16}\Big)^k \\
         \Big(\frac{5 - \sqrt{13}}{16}\Big)^k
     \end{pmatrix}.
\end{align*}
By the symmetry of $P$, we have $\Prob(X_k = i | X_0=3) = \Prob(X_k = 4-i | X_0=1)$, and using that $0$ and $4$ are absorbing states, we actually have obtained above all $k$-step transition probabilities.

The next result captures the exact distribution of the diameter of the avalanche  $\mathsf{T}(\eta+\delta_o)$ as a function of the transition probabilities of $(X_i)_{i\in \N_0}$. In order to simplify the notation,  for any $n\in \N_0$ with ternary representation $(a_i)_{i\in \N_0}$ we denote by 
$$\kappa_n=\sum_{i=0}^\infty \min(1,a_i)\cdot 3^i.$$

\begin{prop}
Let $\Vic$ be the infinite Vicsek graph and $\eta$ a stable configuration on $\Vic$ sampled from the $\mathsf{IVL}$  measure $\mu$. Then the diameter $\mathsf{T}(\eta+\delta_o)$ of the avalanche has the following distribution: for any $n\in \N_0$ we have
\begin{itemize}
\item If $n=0$, then
$$\mu(\mathsf{diam}(\mathsf{T}(\eta+\delta_o)) = 0)=\frac{137}{256}.$$
\item If there exists $i\in \N_0$ with $a_i=2$, then
$$\mu(\mathsf{diam}(\mathsf{T}(\eta+\delta_o)) = n)=0.$$
\item For all the other cases of $n\in\N$:
\begin{align*}
\mu(\mathsf{diam}(\mathsf{T}(\eta+\delta_0)) = n)&=
\Prob( X_{\kappa_{n-1}+1}\in \{2,3\}, X_{n+1} \in \{1,0\}, X_{n+2} = 0)\\
& + \Prob( X_{\kappa_{n-1}+1}=1 , X_{\kappa_{n-1}+2} \in \{1,2,3\}, X_{n+1} \in \{1,0\}, X_{n+2} = 0).
\end{align*}
\end{itemize}
\end{prop}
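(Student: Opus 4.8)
The plan is to read off $\mathsf{diam}(\mathsf{T}(\eta+\delta_o))$ from the trajectory of the chain $(X_i)_{i\in\N_0}$ together with the local configurations $\eta|_{K^i}$, and then to rewrite $\{\mathsf{diam}(\mathsf{T})=n\}$ as the stated events. First I would describe the toppled set. By Lemma~\ref{lem:diagonalization} and the discussion after it, a toppled vertex lies either on the diagonal $\bigcup_i K^i$ or inside a branch $T^x$ attached at a $1$-offset diagonal vertex $x$, and a branch is toppled in its entirety as soon as its attachment vertex $x$ topples. Running through the copies $K^1,K^2,\dots$ in order as in the construction of $(X_i)$, but now recording which of the four vertices of each $K^i$ topple, one sees that this is a deterministic function of the incoming amount $X_{i-1}$, of $\eta|_{K^i}$, and of the chips that flow back into $K^i$ when the shared vertex $(i,i)$ topples during the processing of $K^{i+1}$ --- which happens exactly when $X_{i+1}\geq 1$; the required case distinctions are those displayed in Figure~\ref{fig:k4-transitions} and Table~\ref{tab:k4-transPob}. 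The combinatorial key is that no recurrent configuration of $K_4$ has both of its non-sink off-diagonal corners equal to $0$; hence, whenever $X_{i+1}\geq 1$, at least one off-diagonal corner of $K^i$ receives two extra chips and therefore topples, so that any branch carried by $K^i$ is then entered.

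For the geometry I would use that $\Vic$ is a tree of $K_4$-blocks, so that distances are computed along the block--cut tree: $d(o,(i,i))=i$, and every $1$-offset corner of $K^i$ is at distance $i$ from $o$. Such a corner carries a non-trivial branch exactly when $i\equiv 2\pmod 3$, in which case both off-diagonal corners of $K^i$ carry mirror-symmetric branches, and the self-similar construction identifies the vertex of such a branch farthest from $o$ as lying at distance $F_i:=i+\rho(i)$, where $\rho(i)$ is an explicit function of the trailing block of ternary digits of $i$ (it equals $(3^{t+1}-1)/2$ when that block is $\underbrace{1\cdots1}_t 2$); in particular $F_{c_m}=3^m$ for the centre square $c_m=(3^m+1)/2$ of $\Vic_m$. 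The upshot is that $\mathsf{diam}(\mathsf{T})$ is always a ternary-digit-free integer: by the previous paragraph, as soon as the chain survives past a copy $K^j$ with $j\equiv 2\pmod 3$ the avalanche has already entered its branch, so the diameter is at least $F_j$; keeping the maximum over all such $j$ together with the (bounded) reach along the diagonal, one checks that this maximum --- hence the diameter of any finite avalanche --- avoids the digit $2$. This proves the second bullet.

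For the remaining case, fix $n\geq 1$ with ternary digits in $\{0,1\}$ and put $j=\kappa_{n-1}+1$; the role of $\kappa$ is precisely that $K^{j}$ is the \emph{first} copy along the diagonal whose branch tip is at distance exactly $n$ from $o$ (e.g.\ $n=9$ gives $j=5$, $n=12$ gives $j=11$, $n=27$ gives $j=14$). Using the first two paragraphs, $\mathsf{diam}(\mathsf{T})=n$ precisely when: (a) the avalanche enters the branch of $K^{j}$ without exploding, which by the key combinatorial fact means either $X_{j}\in\{2,3\}$ (the branch being entered while $K^j$ is processed) or $X_{j}=1$ together with $X_{j+1}\in\{1,2,3\}$ (the branch being entered through the back-flow from $K^{j+1}$); and (b) the avalanche reaches no distance exceeding $n$, which --- since for digit-free $n$ no branch off a copy $K^{j'}$ with $j'\le n$ reaches beyond distance $n$ and the diagonal vertices that topple before absorption all lie within distance $n$ of $o$ --- amounts to $X_{n+1}\in\{0,1\}$ and $X_{n+2}=0$. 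The conjunction of (a) and (b) is exactly the union of the two events in the statement; since these two events are distinguished by the value of $X_{j}$ they are disjoint, so their probabilities add, which is the claimed identity.

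Finally, when $n=0$ the avalanche cannot leave $K^1\cup K^2$ and $\mathsf{diam}(\mathsf{T})=0$ means $\mathsf{T}\subseteq\{o\}$: either $o$ does not topple, or $o$ topples but neither off-diagonal corner of $K^1$ nor the cut vertex $(1,1)$ topples; enumerating the $16$ recurrent configurations of $K_4$ on $K^1$ and on $K^2$ and using the one-step probabilities of $(X_i)$ gives $\tfrac{137}{256}$. I expect the main obstacle to be the bookkeeping in the first paragraph --- verifying that, once the back-flow is accounted for, the toppled set (and in particular the entry or non-entry of each branch) really is the asserted function of $(X_i)$ and $(\eta|_{K^i})$ --- together with the explicit computation of the branch tips and the check that all diameters are ternary-digit-free; granting these, everything else is a finite computation with the matrix $P$.
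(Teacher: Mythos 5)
Your proposal is correct and follows essentially the same route as the paper's proof: reduce to the nested-volume chain $(X_i)$ on the diagonal, identify $K^{\kappa_{n-1}+1}$ as the copy whose off-diagonal branches have their tips at distance exactly $n$ from $o$, characterise entry of such a branch by $X_{\kappa_{n-1}+1}\in\{2,3\}$ (forward toppling) versus $X_{\kappa_{n-1}+1}=1$ with $X_{\kappa_{n-1}+2}\geq 1$ (back-flow), and cap the avalanche with $X_{n+1}\in\{0,1\}$, $X_{n+2}=0$; the digit-$2$ exclusion and the $n=0$ enumeration are likewise handled as in the paper. Your explicit observation that no recurrent configuration of $K_4$ has both off-diagonal corners at height $0$ is a clean way to justify the back-flow case, which the paper treats only implicitly through Figure \ref{fig:k4-transitions} and Table \ref{tab:k4-transPob}.
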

\begin{proof}
If $n=0$, then $\{\mathsf{diam}(\mathsf{T}(\eta+\delta_o)) = 0\}$ represents the event that none of the three
vertices $(1,0),(0,1),(1,1)$ topples when adding a particle at $o=(0,0)$ to $\eta$, that is $\eta+\delta_o$ is either stable, or it is unstable and stabilizes after one toppling; so  $(0,0)$ topples by sending one chip to the three neighbours which still remain stable, since the diameter of the toppled sites stays zero. In terms of the Markov chain $(X_i)_{i\in\N}$, $\{\mathsf{diam}(\mathsf{T}(\eta+\delta_o)) = 0\}$ corresponds to the event that given $\{X_0=1\}$ we have either $\{X_1 = 0\}$ or $\{X_1 = 1, X_2 = 0\}$ and neither (1,0) nor (0,1) topple which has probability $p_{10}+\frac{3}{8}p_{11}p_{10}=\frac{1}{2}+\frac{3}{8}\cdot\frac{1}{2}\cdot\frac{3}{16}=\frac{137}{256}$.

For the following let $N = {\lceil \log_3(n) \rceil }$. The ternary representation of $n$ determines the position of the vertex $(n,n)$ in the finite Vicsek graph $\Vic_N$: if $a_{N-1}=1$, the vertex is located in the center copy of $\Vic_{N-1}$ in $\Vic_N$, if $a_{N-1}=2$, then the vertex is in the top right copy of $\Vic_{N-1}$ in $\Vic_N$.
Now let us consider the second case of the proposition, i.e.~for the ternary representation $\sum_{i=0}^\infty a_i\cdot 3^i$ of $n$, there exists some $i\in\N$ such that $a_i=2$, and let us fix the largest such $i\in\N$. If a vertex $x$ that is not on the diagonal and also not in the corner copies of $K_4$ topples, then since all of its descendants must topple, we would obtain a diameter that is larger then the distance of $x$ to the origin. Thus it suffices to only consider vertices on the diagonal. The vertices at distance $n$ of the origin on the diagonal are then given by $(n,n),(n-1,n)$ and $(n,n-1)$. Notice that since $(n-1,n)$ and $(n,n-1)$ are descendants of $(n,n)$, every time $(n,n)$ topples so must $(n-1,n)$ and $(n,n-1)$. However, if $(n-1,n)$ topples then also all its descendants must topple. For an illustration of this argument, assume that the vertex $x$ in Figure \ref{fig:vicsekLev2} topples, then also all of the vertices coloured in red must topple. However, if all those vertices topple, then the diameter of the set of toppled sites must be at least $m+3^{i+1}$, where $m$ has ternary representation
$$b_j=\begin{cases}a_j,&j>i\\0,&j\leq i\end{cases}.$$    
By symmetry the same holds true when $(n,n-1)$ topples. Thus, since $m+3^{i+1}>n$, the diameter cannot be equal to $n$, showing the second assertion. 
    
For the last case, let $m=\min\{ i \in \N_0 : a_i = 1\}$, then the diagonal vertex at distance $n$ is the corner point of a copy of $\Vic_m$. Let us first consider the case $m\geq 1$. Then we can already reach a diameter of $n$ if the center vertex of the corresponding copy of $\Vic_m$ topples, as argued in the previous case. By the center or midpoint of $\Vic_m$ we mean the vertex with coordinates $(\frac{3^m+1}{2},\frac{3^m+1}{2})$. This center vertex of the corresponding copy of $\Vic_m$, of which the diagonal vertex at distance $n$ is a corner of, is given by $(\kappa_{n-1},\kappa_{n-1})$, and this vertex topples if either $X_{\kappa_{n-1}+1} \in \{ 2,3\}$ or $X_{\kappa_{n-1}+1}=1$ and $X_{\kappa_{n-1}+2} \in \{1,2,3\}$. If furthermore the vertex at distance $n+1$ does not topple, that is if $X_{n+1}=0$ or $X_{n+1}=1$ and $X_{n+2}=0$, this is equivalent to the event that vertices at distance $n$ are the furthest that toppled during stabilization. In the case $m=0$, the statement is true by noticing that $\kappa_{n-1}=n-1$. This can be seen from the fact that the ternary representation of $n-1$ also only has $a_i=0$ or $a_i=1$ for all $i\in\N_{0}$ if $m=0$, thus $\kappa_{n-1}=n-1$ from the definition of $\kappa_{n-1}$.
\end{proof}

With this result and the $k$-step transition probabilities of the Markov chain 
$(X_n)_{n\in\N}$, we can graphically represent the distribution of the avalanche radii as in Figure \ref{fig:dist_av_rad}. The logarithmic scale on the $y$-axis shows the exponential decay in probability and the gaps capture the jumps of avalanche sizes, once the midpoint $(\frac{3^n+1}{2},\frac{3^n+1}{2})$ of $\Vic_n$ topples. Also notice the self similar structure of the gaps originating from the  self similarity of the underlying state space.
\begin{figure}[h]
    \centering
    \begin{tikzpicture}
    \pgfplotsset{
    width=.8\textwidth}

    \begin{axis}[ybar, ymode = log, log origin=infty, bar width=0.5pt,
    xtick={1, 10, 27, 40, 90,120,245},
    xtick pos=left,
    ytick pos=left]
    \addplot coordinates{
    (1,9/128)(3,513/8192)(4,189/16384)(9,113759235/8589934592)(10,1199637/4294967296)(12,1042963857/4398046511104)(13,382257441/8796093022208)(27,2021636242128092788737/38685626227668133590597632)(28,306567753658951869/77371252455336267181195264)(30,266529863320182607149/79228162514264337593543950336)(31,97686053815163514237/158456325028528675187087900672)(36,459260096086157873092299/649037107316853453566312041152512)(37,619904475572423732084241/41538374868278621028243970633760768)(39,33684041531993129487509697/2658455991569831745807614120560689152)(40,12345562530280703719909341/5316911983139663491615228241121378304)(81,2526284577241797296683285833067975052082012969887917611519833811/904625697166532776746648320380374280103671755200316906558262375061821325312)(82,5116303807232202706943481457986859281253291105014357/452312848583266388373324160190187140051835877600158453279131187530910662656)(84,4448112164996809250154643186115189278752427690840041057/463168356949264781694283940034751631413079938662562256157830336031652518559744)(85,1630280820741555079742049364725914709491612782904691441/926336713898529563388567880069503262826159877325124512315660672063305037119488)(90,15329167207386258089885162428123750897269445650887944300449/7588550360256754183279148073529370729071901715047420004889892225542594864082845696)(91,10345574805691444291808088921601247979400019830304274509013/242833611528216133864932738352939863330300854881517440156476551217363035650651062272)(93,562152372114208404860735912888052955138805190212421300162441/15541351137805832567355695254588151253139254712417116170014499277911234281641667985408)(94,206034874256103918368874433573055838065290959352532512268373/31082702275611665134711390509176302506278509424834232340028998555822468563283335970816)(108,17434431323234278775895352847630055456672652253572363862170575132554188161/2187250724783011924372502227117621365353169430893212436425770606409952999199375923223513177023053824)(109,2643816100888923701041810308197152837905435977833317249689241605423841/4374501449566023848745004454235242730706338861786424872851541212819905998398751846447026354046107648)(111,143658281000570667501593686175924651083545352553454933128286678436140417/279968092772225526319680285071055534765205687154331191862498637620473983897520118172609686658950889472)(112,52652300924183034090926738127531184875728619237703897386288850610078301/559936185544451052639360570142111069530411374308662383724997275240947967795040236345219373317901778944)(117,31684982442847302504998196255279873442161735967644771156374012731485541392075/293567822846729153486185074598667128421960318613539983838411371441526128139326055432962374798096087878991872)(118,334125453095338457462913524520329778680549554973856946727986654471100138133/146783911423364576743092537299333564210980159306769991919205685720763064069663027716481187399048043939495936)(120,290488514471633597285131459261551967734979477104509710599074913926796528861993/150306725297525326584926758194517569752043683130132471725266622178061377607334940381676735896625196994043838464)(121,106467156452460912998964780308798335464868395384248301767813217218611321662009/300613450595050653169853516389035139504087366260264943450533244356122755214669880763353471793250393988087676928)(243,19246687987621817376495094066653394229420347715595538710990554535506334999143989532213013866670188736419591065349679489316567022920605428766055005806792938710827964493548598374988236130713/45184223339331479951185741475274045813621662589625240394934430893803101285779175998493982735923679951534365847972543945249972749854054255162024849415791137702892737463723457929162113159883256443580117751661352485851758592)(244,23781863680670203206624068234843090522332466703865185048732069956650401211907207398037954206321100320616884999109930722067460318584622799677600424000300989/90368446678662959902371482950548091627243325179250480789868861787606202571558351996987965471847359903068731695945087890499945499708108510324049698831582275405785474927446915858324226319766512887160235503322704971703517184)(246,20675941290810824346775222223611250033640232318212393319106741946478478686088614107958946172259596630258450934047697973028201086491783728607415641052876528349/92537289398950870940028398541361245826297164983552492328825714470508751433275752444915676643171696540742381256647769999871944191701103114571826891603540250015524326325705641838924007751440909196452081155402449891024401596416)(247,7577954261684284175413642183491643832170495832023175883829310157514032606797919855015998957658789134261138913902931267041683335021051226925595394242272328237/185074578797901741880056797082722491652594329967104984657651428941017502866551504889831353286343393081484762513295539999743888383402206229143653783207080500031048652651411283677848015502881818392904162310804899782048803192832)(252,35626907490221544548879447287368285943599633855587648748666213500458527760469273657684966292003436014266497457006327861246542793348956128202401451790168397734889/758065474756205534740712640850831325809026375545262017157740252942407691741394964028749223060862538061761587254458531838950966818415436714572405896016201728127175281260180617944465471499803928137335448825056869507271897877839872)(253,48088827207512952494205909685843201840309617425162304708823148549411470336652264964097913432816848845876108411911071163417985036936588328207604979842638947309441/48516190384397154223405609014453204851777688034896769098095376188314092271449277697839950275895202435952741584285346037692861876378587949732633977345036910600139218000651559548445790175987451400789468724803639648465401464181751808)(255,31693507737097536245807288193350834051390094559052062019546692906548171059573301934472134772105479196225979650914557904834707021418050334975386963336417339815728735747124792473788636984695572522586337123322631593837550417410886353557315466002977073276497936850839179556420361727119259039273752110205802530830646820264945498189946501932747340868166202637202178960817698806485908769573265172703/37661131670641134196564321969192408014251456999151389864994922485394646743621925010454453501677282025991386973138875074810350265293671223827650202935793353928987708427837894137043469967463773432450711611521684688399958529029759361085334173782804811953090869395973448087960521176275272318934283811996654224537015142331587374110222375180668100782509056060170328026795191929322603612211172871951146487222820410109263092167280797070716301943769265419640699380301824)
    };
    \end{axis}
    \end{tikzpicture}
    \caption{Distribution of the avalanche radius.}
    \label{fig:dist_av_rad}
\end{figure}
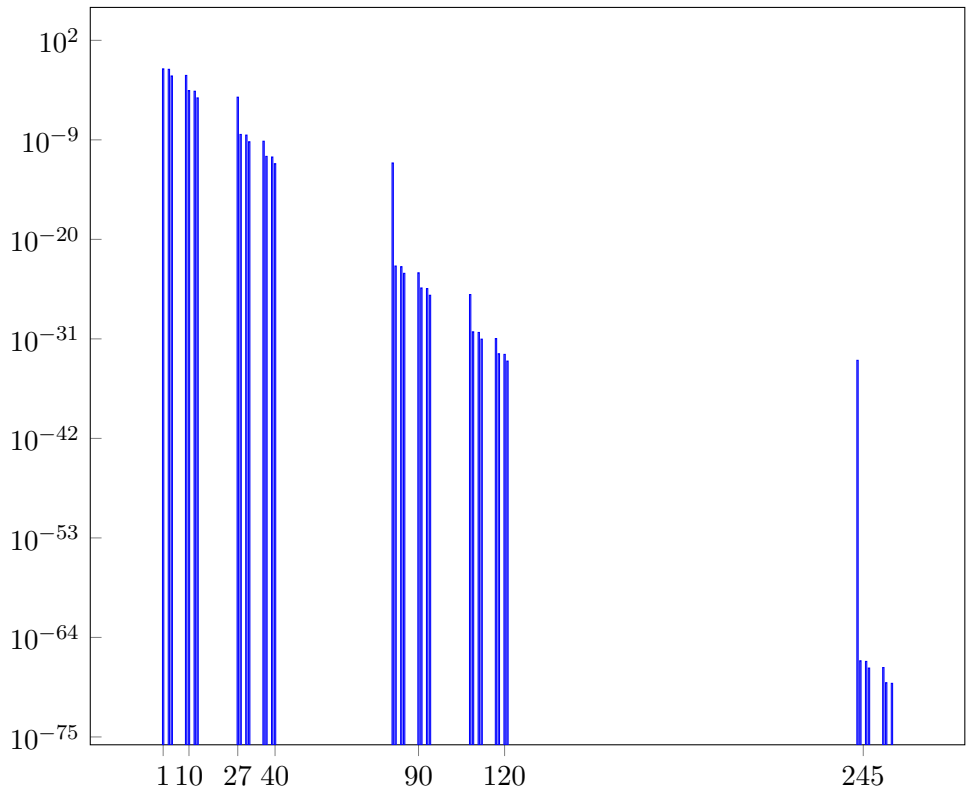

\section{Order of recurrent sandpiles and stabilization}\label{sec:relation-order-stabilization}

So far we have shown that the infinite Vicsek graph $\Vic$ is an example of a recurrent graph on which the infinite volume limit plus one particle stabilizes with probability  $\frac{3}{4}$.  However, other well-known examples of recurrent graphs, such as $\Z^2$ and the Sierpi\'nski gasket graphs, do not seem to share this behaviour, as one expects from simulations  in \cite{jarai_toppling_2019,daerden_sandpiles_1998}. Thus, it seems natural to ask which qualitative property of the Vicsek graph is responsible for the non-stabilization behaviour in infinite volume limit.
A closer look at the structure of the sandpile group of the finite graphs $\Vic_n$, for $n\in\N$, reveals that we can find an uniform upper bound on the order of all elements of the sandpile group on $\Vic_n$; see Section \ref{sec:sandpile-group} for details. Below we prove that such an uniform upper bound on the order of recurrent sandpiles is sufficient to prove non-stabilization in infinite volume.

In the next result, $H$ is a finite graph with sink $s$, and we denote by $\mathcal{R}_{H}$ its sandpile group, i.e.~the set of recurrent states of the sandpile Markov chain over $H$.

\begin{lem}\label{lem:send-to-sink-upper-bound}
Let $H$ be a finite connected graph with designated sink vertex $s$, and let $x$ be any other vertex. If there exists $\varepsilon>0$ such that
\begin{align*}
\mathbb{P}(\eta+\delta_x\text{ sends at least one particle to $s$ during stabilization})<\varepsilon,
\end{align*}
where $\eta$ is sampled from the uniform distribution over $\mathcal{R}_H$, then it holds
\begin{align*}
\mathbb{P}(\eta+2\delta_x\text{ sends at least one particle to $s$ during stabilization})<2\varepsilon.
\end{align*}
\end{lem}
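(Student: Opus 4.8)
The plan is to reduce the two‑particle event to the one‑particle event by splitting the stabilization of $\eta+2\delta_x$ into two successive single‑particle additions via the Abelian property, and then to observe that the intermediate configuration is again uniformly distributed on $\mathcal{R}_H$.

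Write $N(\sigma)$ for the number of particles that reach $s$ during the stabilization of a configuration $\sigma$. If $u_\sigma$ denotes the odometer of $\sigma$ (the vector of toppling numbers, so that $\sigma^\circ=\sigma-\Delta u_\sigma$ on $V$), then $u_\sigma$ is well defined by the Abelian property together with the invertibility of the grounded Laplacian, and $N(\sigma)$ is a nonnegative linear functional of $u_\sigma$ (explicitly $N(\sigma)=\sum_{v\sim s}u_\sigma(v)$ when $H$ is simple). By the Abelian property and monotonicity of legal toppling sequences under adding chips, one may stabilize $\eta+2\delta_x$ by first performing the topplings of $u_{\eta+\delta_x}$ — reaching $\zeta:=(\eta+\delta_x)^\circ$ — and then adding the second particle at $x$ and performing the topplings of $u_{\zeta+\delta_x}$; hence $u_{\eta+2\delta_x}=u_{\eta+\delta_x}+u_{\zeta+\delta_x}$ and therefore
\begin{align*}
N(\eta+2\delta_x)=N(\eta+\delta_x)+N(\zeta+\delta_x),
\end{align*}
with both summands nonnegative, so that $\{N(\eta+2\delta_x)\ge 1\}\subseteq\{N(\eta+\delta_x)\ge 1\}\cup\{N(\zeta+\delta_x)\ge 1\}$.

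It remains to control the law of $\zeta=(\eta+\delta_x)^\circ$. It is standard (via Dhar's burning algorithm, see \cite{dhar_self-organized_1990,Holroyd_2008}) that stabilizing a recurrent configuration together with extra chips yields a recurrent configuration; thus $\zeta\in\mathcal{R}_H$, and $\zeta$ is the unique recurrent element of the class $[\eta+\delta_x]=[\eta]+[\delta_x]$ in $\Z^V\slash\Delta\Z^V$. As $\eta$ runs over the uniform distribution on $\mathcal{R}_H$, the class $[\eta]$ is uniform on the finite group $\Z^V\slash\Delta\Z^V$, hence so is the translate $[\eta]+[\delta_x]$, and therefore $\zeta$ is uniform on $\mathcal{R}_H$, i.e.\ $\zeta$ has the same law as $\eta$. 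Consequently $\mathbb{P}(N(\zeta+\delta_x)\ge 1)=\mathbb{P}(N(\eta+\delta_x)\ge 1)<\varepsilon$ by hypothesis, and a union bound over the two events above gives
\begin{align*}
\mathbb{P}(N(\eta+2\delta_x)\ge 1)\le \mathbb{P}(N(\eta+\delta_x)\ge 1)+\mathbb{P}(N(\zeta+\delta_x)\ge 1)<2\varepsilon,
\end{align*}
which is the claim. The only point needing care is the additivity of $N$: it should be phrased through the odometer, using the uniqueness of the odometer under the Abelian property and the linearity of the sink‑flux in the toppling vector; everything else — invariance of the uniform measure under group translation and the union bound — is routine, and the same argument iterates to bound $\mathbb{P}(N(\eta+k\delta_x)\ge 1)$ by $k\varepsilon$, which is presumably how the lemma is applied.
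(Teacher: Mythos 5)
Your proposal is correct and follows essentially the same route as the paper's proof: decompose the stabilization of $\eta+2\delta_x$ into two successive single-particle additions via the Abelian property, note that the intermediate configuration $(\eta+\delta_x)^\circ$ is again uniform on $\mathcal{R}_H$ (group translation preserves the uniform measure), and conclude by a union bound. You spell out the odometer additivity and the measure-preservation more explicitly than the paper does, but the argument is the same.
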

\begin{proof}
Let
\begin{align*}
 A=\{\eta\in\mathcal{R}_H:\ \eta+\delta_x\text{ sends at least one particle to $s$ during stabilization}\},
\end{align*}
and 
\begin{align*}
B=\{\eta\in\mathcal{R}_H:\ \eta+2\delta_x\text{ sends at least one particle to $s$ during stabilization}\},
\end{align*}
and pick a sandpile $\eta\in B$. Then either $\eta\in A$ or $\eta\notin A$, but $(\eta+\delta_x)^\circ\in A$. Either way, we have
\begin{align*}
\mathbb{P}(\eta\in B)=\mathbb{P}(\eta\in A)+\mathbb{P}(\eta\notin A,\eta+\delta_x\in A)
\leq \mathbb{P}(\eta\in A)+\mathbb{P}(\eta+\delta_x\in A)<2\varepsilon,
\end{align*}
and this completes the proof.
\end{proof}
\begin{lem}\label{lem:almost-surely-to-sink}
Let $H$ be a finite connected graph with sink vertex $s$, and let $x$ be any other vertex. If there exists $M\in\N$ such that $\order([\eta])<M$ for all $\eta\in\mathcal{R}_H$, then we have
\begin{align*}
\mathbb{P}(\eta+M\delta_x\text{ sends at least one particle to $s$ during stabilization})=1,
\end{align*}
where $\eta$ is sampled from the uniform distribution over $\mathcal{R}_H$.
\end{lem}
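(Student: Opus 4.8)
The plan is to reduce the statement to mass conservation together with the group structure of $\mathcal{R}_H$, and in fact to establish the stronger \emph{deterministic} fact that $\eta+M\delta_x$ sends at least one particle to $s$ for every $\eta\in\mathcal{R}_H$. First I would fix notation: for recurrent $\eta$ and $k\in\N$ let $N(\eta,k)$ be the number of particles absorbed at $s$ during the stabilization of $\eta+k\delta_x$. Since the sink is the only place where mass can leave the system, mass conservation gives $N(\eta,k)=\big(\sum_v\eta(v)+k\big)-\sum_v(\eta+k\delta_x)^\circ(v)$, so the event of the lemma is exactly $\{N(\eta,M)\ge 1\}$, and it suffices to prove $N(\eta,M)\ge1$ for every $\eta\in\mathcal{R}_H$.

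Next I would invoke the identification $\mathcal{R}_H\cong\Z^V/\Delta\Z^V$ together with the fact that each equivalence class contains a unique recurrent configuration (stated above, following \cite[Lemma 2.13, Lemma 2.15]{Holroyd_2008}). Since $\eta$ is recurrent and the sandpile Markov chain moves only between recurrent configurations, $(\eta+\delta_x)^\circ$ is recurrent, and by iteration $(\eta+k\delta_x)^\circ$ is recurrent; lying also in the class $[\eta]+k[\delta_x]$, it \emph{is} the recurrent representative of that class. Put $d=\order([\delta_x])$. The element $[\delta_x]$ belongs to $\mathcal{R}_H$ (through its recurrent representative), so the hypothesis gives $d<M$, and trivially $d\ge 1$. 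Writing $M=qd+r$ with $0\le r<d$, we have $[\eta]+M[\delta_x]=[\eta]+r[\delta_x]$, hence $(\eta+M\delta_x)^\circ=(\eta+r\delta_x)^\circ$ and in particular $\sum_v(\eta+M\delta_x)^\circ(v)=\sum_v(\eta+r\delta_x)^\circ(v)$. Substituting this into the mass-conservation formula and subtracting the same identity for $k=r$ yields $N(\eta,M)=N(\eta,r)+qd\ge qd$. Because $d<M$ forces $q=\lfloor M/d\rfloor\ge 1$ and $d\ge 1$, this gives $N(\eta,M)\ge 1$ for every $\eta\in\mathcal{R}_H$, and the lemma follows since $\eta$ is sampled from a distribution supported on $\mathcal{R}_H$.

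I do not anticipate a genuine obstacle; the argument is short once the bookkeeping is in place. The two points that need a little care are (i) checking that $(\eta+k\delta_x)^\circ$ stays recurrent, so that the group description applies verbatim and $(\eta+k\delta_x)^\circ$ is literally the recurrent representative of $[\eta]+k[\delta_x]$; and (ii) that $M$ need not be a multiple of $d$, so one cannot simply assert $(\eta+M\delta_x)^\circ=\eta$ — but the leftover term $N(\eta,r)\ge 0$ is harmless, since the contribution $qd\ge 1$ already does the work. An equivalent route replaces the division with remainder by the monotonicity $N(\eta,k)\le N(\eta,k+1)$ (the odometer is monotone in the initial configuration, a standard consequence of the abelian property) together with the special case $k=d$, where $(\eta+d\delta_x)^\circ=\eta$ forces $N(\eta,d)=d\ge 1$ by mass conservation; as $M>d$, the lemma again follows. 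This variant parallels the iterated chip-addition reasoning already used in Lemma \ref{lem:send-to-sink-upper-bound}.
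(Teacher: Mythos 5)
Your proposal is correct and follows essentially the paper's argument: the order $d=\order([\delta_x])<M$ forces $(\eta+d\delta_x)^\circ=\eta$, mass conservation then shows $d\geq 1$ particles reach the sink, and the case of $M$ added chips follows; your closing ``equivalent route'' via monotonicity is precisely the paper's one-line proof. Your primary variant, which replaces monotonicity by division with remainder and the exact identity $N(\eta,M)=N(\eta,r)+qd$, is a harmless rearrangement that spells out the mass-conservation bookkeeping the paper leaves implicit.
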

\begin{proof}
 We have $(\eta+\order([\delta_x])\delta_x)^\circ=\eta$ and thus
\begin{align*}
 \mathbb{P}(\eta+\order([\delta_x])\delta_x\text{ sends at least one particle to $s$ during stabilization})=1.
\end{align*}
 The claim now follows from $\order([\delta_x])<M$ together with monotonicity.
\end{proof}
We can now prove Theorem \ref{thm:relation-order-stabilziation}.
\begin{proof}[Proof of Theorem \ref{thm:relation-order-stabilziation}]
Assume the statement is false, thus for every $\varepsilon>0$ we can find $R>0$ such that
 \begin{align*}
  \mu(\mathsf{diam}(\mathsf{T}(\eta+\delta_x)>R)<\varepsilon.
 \end{align*}
If $\mu_n$ is the uniform distribution over $\mathcal{R}_n$, and we let $\varepsilon = 2^{-M-1}$ and choose $n$ large enough, we get
\begin{align*}
\mu_n(\mathsf{diam}(\mathsf{T}(\eta+\delta_x)>R)<2^{-M},
\end{align*}
and by inductively applying Lemma \ref{lem:send-to-sink-upper-bound} we obtain
\begin{align*}
 \mu_n(\mathsf{diam}(\mathsf{T}(\eta+M\delta_x)>R)<2^M\cdot 2^{-M}=1,
 \end{align*}
 which contradicts the statement of Lemma \ref{lem:almost-surely-to-sink} applied to $G_n$. Thus the assumption is false and the probabilities $\mu(\mathsf{diam}(\mathsf{T}(\eta+\delta_x))>R)$ are uniformly bounded away from $0$ by some constant $c>0$.
\end{proof}

\section{Sandpile group of the Vicsek graphs}\label{sec:vicsek-sandpile-group}

Lemma \ref{lem:4-particles-to-origin} and Corollary \ref{cor:4particlesDiag} show that for any recurrent sandpile $\eta\in\mathcal{R}_n$, if one adds $4$ particles to any vertex $x$ on the diagonal set $D_n^0$  of $\Vic_n$ and then stabilizes $\eta+4\delta_x$, the stabilized configuration is again $\eta$, so $\eta$ stays invariant under the stabilisation operation on $\Vic_n$ with sink $s_n=(3^n,3^n)$. In terms of the sandpile group $\mathcal{R}_n$ of $\Vic_n$, for any $n\in\N$ this means that
\begin{align*}
    \forall x\in \diag_n^0 \text{ it holds}\quad  4\cdot[\delta_x]=[0],
\end{align*}
that is, the recurrent sandpile in the equivalence class of $\delta_x$ has order $4$ in the sandpile group of $\Vic_n$, thus we suspect that $4$ might be related with the order of the elements in the sandpile group $\mathcal{R}_n$. This is partially the case as the next result shows. 
\begin{thm}\label{thm:sandpile-group}
For all $n\in \N$ and all recurrent sandpiles $\eta\in\mathcal{R}_{n}$ we have
 \begin{align*}
  \order_{\mathcal{R}_{n}}([\eta])\in\{2,4\} \quad \text{and} \quad \mathcal{R}_{n}\cong \Z_4^{2\cdot 5^n}.
    \end{align*}
\end{thm}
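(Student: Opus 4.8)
The plan is to prove Theorem~\ref{thm:sandpile-group} by exploiting the self-similar cut-vertex structure of $\Vic_n$. First I would establish, by induction on $n$, that $\Vic_n$ is obtained by gluing $5^n$ copies of $K_4$ along cut vertices; equivalently, $\Vic_n$ is a block graph whose blocks are exactly these $5^n$ copies of $K_4$ (and hence whose block--cut tree is a tree). The base case $\Vic_0=K_4$ is immediate. For the inductive step I would take the five shifted copies of $\Vic_{n-1}$ from the definition of $\Vic_n$, with offsets $0$, $(3^{n-1},3^{n-1})$, $(2\cdot 3^{n-1},0)$, $(0,2\cdot 3^{n-1})$ and $2\cdot(3^{n-1},3^{n-1})$, and check by a direct coordinate computation that any two of them are either disjoint or meet in exactly one vertex, and that any such shared vertex is a corner (one of $(0,0),(3^{n-1},0),(0,3^{n-1}),(3^{n-1},3^{n-1})$, suitably shifted) of both copies. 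Concretely, the central copy shares one corner with each of the four outer copies, these four shared vertices are pairwise distinct, and no two outer copies intersect; so the five copies are glued in a star, and the shared vertices are genuine cut vertices of $\Vic_n$. Since gluing connected graphs at a single vertex creates no new cycles, the blocks of $\Vic_n$ are precisely the union of the blocks of the five copies, i.e.\ $5\cdot 5^{n-1}=5^{n}$ copies of $K_4$.

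Second, I would use the standard fact that the critical group is multiplicative over cut vertices: if a connected graph $H$ decomposes as $H=H_1\cup H_2$ with $V(H_1)\cap V(H_2)=\{v\}$, then $\mathcal{R}_H\cong\mathcal{R}_{H_1}\oplus\mathcal{R}_{H_2}$. The cleanest self-contained argument is to use that the critical group of a connected graph is independent of the chosen sink, and then to place the sink at $v$: with this choice there are no edges of $H$ between $V(H_1)\setminus\{v\}$ and $V(H_2)\setminus\{v\}$, and each non-sink vertex keeps its degree, so the reduced Laplacian of $H$ is block diagonal with diagonal blocks the reduced Laplacians of $H_1$ and $H_2$ (each with sink $v$); passing to cokernels gives the claim. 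Applying this repeatedly to peel the four outer copies of $\Vic_{n-1}$ off the central one yields $\mathcal{R}_n\cong\mathcal{R}_{n-1}^{\oplus 5}$ for every $n\ge 1$, hence $\mathcal{R}_n\cong\mathcal{R}_0^{\oplus 5^{n}}$. (Alternatively one can simply cite that the critical group is the direct sum of the critical groups of the blocks in the block--cut tree.)

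Third, I would compute $\mathcal{R}_0=\mathcal{R}_{K_4}$, either by quoting the classical formula $\mathcal{R}_{K_m}\cong\Z_m^{\,m-2}$, or by computing the Smith normal form of the reduced Laplacian of $K_4$ (the $3\times3$ matrix with diagonal entries $3$ and off-diagonal entries $-1$): its determinant is $16$, the gcd of its entries is $1$, and the gcd of its $2\times2$ minors is $4$, so the invariant factors are $(1,4,4)$ and $\mathcal{R}_{K_4}\cong\Z_4\oplus\Z_4$ — consistent with the $16$ recurrent configurations in Figure~\ref{fig:recurrentK4}. Combining with the previous step gives $\mathcal{R}_n\cong(\Z_4\oplus\Z_4)^{\oplus 5^{n}}=\Z_4^{\,2\cdot 5^{n}}$, which is the asserted isomorphism; in particular $|\mathcal{R}_n|=4^{2\cdot 5^{n}}$, matching the number of spanning trees of a block graph (the product over its blocks). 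This also pins down all invariant factors of $\mathcal{R}_n$: they are $2\cdot 5^{n}$ copies of $4$, together with $5^{n}$ trivial ones accounting for the $3\cdot 5^{n}=|V_n|-1$ generators.

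The statement on orders is then immediate: every element of $\Z_4^{\,2\cdot 5^{n}}$ has order dividing $4$, so $\order_{\mathcal{R}_n}([\eta])\in\{1,2,4\}$, the value $1$ occurring only for the identity (the maximal stable configuration); in particular $\order_{\mathcal{R}_n}([\eta])\in\{2,4\}$ for every non-identity recurrent $\eta$, and both values are attained (in $\Z_4^k$ with $k\ge1$ there are elements of order $2$ and of order $4$, and Corollary~\ref{cor:4particlesDiag} already records $4\cdot[\delta_x]=[0]$ for diagonal $x$). I expect the only real work to be the first step: making the coordinate bookkeeping for the pairwise intersections of the five shifted copies precise enough that the induction is airtight, including verifying that the shared corners are cut vertices and that no two outer copies touch. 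Everything downstream is standard critical-group theory together with the one-line $K_4$ computation.
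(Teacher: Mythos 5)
Your proof is correct, but it takes a genuinely different route from the paper. You decompose $\Vic_n$ into its blocks --- $5^n$ copies of $K_4$ glued along cut vertices, with the central copy of $\Vic_{n-1}$ meeting each outer copy in exactly one corner --- and invoke the standard fact that the critical group of a connected graph is the direct sum of the critical groups of its blocks (block-diagonality of the reduced Laplacian with sink placed at the cut vertex, together with sink-independence), reducing everything to the one-line computation $\mathcal{R}_{K_4}\cong\Z_4\oplus\Z_4$. The paper instead argues entirely inside the sandpile dynamics: it first notes $|\mathcal{R}_n|=16^{5^n}$ by the matrix-tree theorem, so every order is a power of $2$; it then proves $4\cdot[\delta_x]=[0]$ for every vertex $x$ by stabilizing along the geodesic subgraph of $x$ (Lemma \ref{lem:4-particles-to-origin}, Corollary \ref{cor:4particlesDiag}, Lemma \ref{lem:order-upper-bound-delta}), which caps all orders at $4$; and finally it rules out $\Z_2$-factors by recursively bounding the number of order-$2$ elements, $|\orderset(\mathcal{R}_n,2)|\le|\orderset(\mathcal{R}_{n-1},2)|^5$, via an explicit construction of the identity $\id_n$ from five copies of $\id_{n-1}$ (Lemmas \ref{lem:id-construction}, \ref{lem:preimage-count} and \ref{lem:order-2-upperbound}). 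Your argument is shorter and delivers the full invariant-factor decomposition in a single stroke; the paper's argument is self-contained at the level of topplings, yields the explicit identity element $\id_n$ as a by-product, and its intermediate lemmas (invariance of the configuration off the diagonal, $4\delta_x\approx 0$) are exactly the ingredients reused in the probabilistic proof of Theorem \ref{thm:vicsek-explosion}, which is presumably why the authors chose that route. One small point in your favour: you correctly observe that the identity element has order $1$, so the assertion $\order_{\mathcal{R}_n}([\eta])\in\{2,4\}$ must be read as excluding the identity --- a caveat the paper leaves implicit.
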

The proof of Theorem \ref{thm:sandpile-group} will be completed in the following three steps:
\begin{enumerate}
\item We remark that any recurrent sandpile $\eta$ on $\Vic_n$ must have an order that divides the number of recurrent sandpiles. Since by the matrix-tree theorem, $|\mathcal{R}_{n}|=16^{5^n}$, for any $\eta\in\mathcal{R}_n$, there must exist a $k\in\N$ such that
    \begin{align*}
        \order_{\mathcal{R}_{n}}([\eta])=2^k.
    \end{align*}

    \item The second step is to prove that for all $x\in V_n$ we have
    \begin{align*}
        \order_{\mathcal{R}_{n}}([\delta_x])\leq4,
    \end{align*}
    from which follows that all recurrent sandpiles have order at most $4$.
\item Finally, we show the second assertion of Theorem \ref{thm:sandpile-group} by counting the sandpiles of order $2$.
\end{enumerate}

\subsection{Upper bounds on the order of group elements}

We show here that any recurrent sandpile on $\Vic_n$ has order at most $4$, by first considering the equivalence classes of $\delta_x$ for all $x\in V_n$.
For $x\in V_n$, denote by $\gamma_x^{(n)}$  the geodesic from $x$ to the sink $s_n=(3^n,3^n)$, i.e.~the shortest path in $\Vic_n$ connecting $x$ to $s_n$. We define the set of descendants of $x$ in $\Vic_n$ by
\begin{align*}
    \descset_n(x)=\{v\in V_n:\ \gamma_v^{(n)}\text{ goes through }x\} \backslash \{x\},
\end{align*}
and the \textit{geodesic subgraph} $\geodsubgraph_n(x)$  of $x$ as the subgraph of $\Vic_n$ with vertex set
\begin{align*}
    V(\geodsubgraph_n(x))=\{v \in V_n: d(v,\gamma_x^{(n)}) \leq 1 \} \backslash\descset_n(x).
\end{align*}
Visually speaking, $\geodsubgraph_n(x)$ is the path of successive copies of complete graphs $K_4$ that connects $x$ with the sink vertex.
For vertices $x\in\diag_n^1$ we have $\descset_n(x) = T^x_n$ and $\Gamma_n(x)$ is the subgraph of $\Vic_n$ with vertex set $V(\geodsubgraph_n(x))= \{y\in \diag_n : \|y\|_\infty \geq \|x\|_\infty -1\}$. It follows immediately from Lemma \ref{lem:diagonalization} by exploiting the cut point structure of the Vicsek graphs and invoking Dhar's burning algorithm, that by adding a particle to the vertex $x$, we can change a recurrent sandpile only along the geodesic subgraph of $x$.
\begin{figure}[t]
    \centering
    \begin{tikzpicture}[scale=0.75]
        \node[shape=circle,draw=black] (A) at (0,0) {};
        \node[shape=circle,draw=black] (B) at (1,0) {};
        \node[shape=circle,draw=black] (C) at (0,1) {};
        \node[shape=circle,draw=black] (D) at (1,1) {};
        \draw[color=black] (A) -- (B) -- (C) -- (D) -- (A) -- (C);
        \draw[color=black] (B) -- (D);
         \node (A) at (0,0) {\small\color{blue} $o$};
        \begin{scope}[shift={(1,1)}]
            \node[shape=circle,draw=black] (A) at (0,0) {};
            \node[shape=circle,draw=black] (B) at (1,0) {};
            \node[shape=circle,draw=black] (C) at (0,1) {};
            \node[shape=circle,draw=black] (D) at (1,1) {};
            \draw[color=black] (A) -- (B) -- (C) -- (D) -- (A) -- (C);
            \draw[color=black] (B) -- (D);
        \end{scope}
        \begin{scope}[shift={(2,2)}]
            \node[shape=circle,draw=black] (A) at (0,0) {};
            \node[shape=circle,draw=black] (B) at (1,0) {};
            \node[shape=circle,draw=black] (C) at (0,1) {};
            \node[shape=circle,draw=orange] (D) at (1,1) {};
            \draw[color=black] (A) -- (B) -- (C) -- (D) -- (A) -- (C);
            \draw[color=black] (B) -- (D);
        \end{scope}
        \begin{scope}[shift={(2,0)}]
            \node[shape=circle,draw=black] (A) at (0,0) {};
            \node[shape=circle,draw=black] (B) at (1,0) {};
            \node[shape=circle,draw=black] (C) at (0,1) {};
            \node[shape=circle,draw=black] (D) at (1,1) {};
            \draw (A) -- (B) -- (C) -- (D) -- (A) -- (C);
            \draw (B) -- (D);
        \end{scope}
        \begin{scope}[shift={(0,2)}]
            \node[shape=circle,draw=black] (A) at (0,0) {};
            \node[shape=circle,draw=black] (B) at (1,0) {};
            \node[shape=circle,draw=black] (C) at (0,1) {};
            \node[shape=circle,draw=black] (D) at (1,1) {};
            \draw (A) -- (B) -- (C) -- (D) -- (A) -- (C);
            \draw (B) -- (D);
        \end{scope}
        \begin{scope}[shift={(3,3)}]
            \node[shape=circle,draw=black] (A) at (0,0) {};
            \node[shape=circle,draw=black] (B) at (1,0) {};
            \node[shape=circle,draw=black] (C) at (0,1) {};
            \node[shape=circle,draw=orange] (D) at (1,1) {};
            \draw (A) -- (B) -- (C) -- (D) -- (A) -- (C);
            \draw (B) -- (D);
            \begin{scope}[shift={(1,1)}]
                \node[shape=circle,draw=orange] (A) at (0,0) {};
                \node[shape=circle,draw=orange] (B) at (1,0) {};
                \node[shape=circle,draw=orange] (C) at (0,1) {};
       
                \node[shape=circle,draw=orange] (D) at (1,1) {};
                \draw[color=orange] (A) -- (B) -- (C) -- (D) -- (A) -- (C);
                \draw[color=orange] (B) -- (D);
            \end{scope}
            \begin{scope}[shift={(2,2)}]
                \node[shape=circle,draw=orange] (A) at (0,0) {};
                \node[shape=circle,draw=orange] (B) at (1,0) {};
                \node[shape=circle,draw=orange] (C) at (0,1) {};
                \node[shape=circle,draw=orange] (D) at (1,1) {};
                \draw[color=orange] (A) -- (B) -- (C) -- (D) -- (A) -- (C);
                \draw[color=orange] (B) -- (D);
            \end{scope}
            \begin{scope}[shift={(2,0)}]
                \node[shape=circle,draw=orange] (A) at (0,0) {};
                \node[shape=circle,draw=orange] (B) at (1,0) {};
                \node[shape=circle,draw=orange] (C) at (0,1) {};
                \node[shape=circle,draw=orange] (D) at (1,1) {};
                \draw[draw=orange] (A) -- (B) -- (C) -- (D) -- (A) -- (C);
                \draw[draw=orange] (B) -- (D);
            \end{scope}
            \begin{scope}[shift={(0,2)}]
                \node[shape=circle,draw=black] (A) at (0,0) {};
                \node[shape=circle,draw=orange] (B) at (1,0) {};
                \node[shape=circle,draw=black] (C) at (0,1) {};
                \node[shape=circle,draw=black] (D) at (1,1) {};
                \draw (A) -- (B) -- (C) -- (D) -- (A) -- (C);
                \draw (B) -- (D);
            \end{scope}
        \end{scope}
        \begin{scope}[shift={(6,6)}]
            \node[shape=circle,draw=orange] (A) at (0,0) {};
            \node[shape=circle,draw=orange] (B) at (1,0) {};
            \node[shape=circle,draw=orange] (C) at (0,1) {};
            \node[shape=circle,draw=orange] (D) at (1,1) {};
            \draw[color=orange] (A) -- (B) -- (C) -- (D) -- (A) -- (C);
            \draw[color=orange] (B) -- (D);
            \begin{scope}[shift={(1,1)}]
                \node[shape=circle,draw=orange] (A) at (0,0) {};
                \node[shape=circle,draw=orange] (B) at (1,0) {};
                \node[shape=circle,draw=orange] (C) at (0,1) {};
                \node[shape=circle,draw=orange] (D) at (1,1) {};
                \draw[color=orange] (A) -- (B) -- (C) -- (D) -- (A) -- (C);
                \draw[color=orange] (B) -- (D);
            \end{scope}
            \begin{scope}[shift={(2,2)}]
                \node[shape=circle,draw=orange] (A) at (0,0) {};
                \node[shape=circle,draw=orange] (B) at (1,0) {};
                \node[shape=circle,draw=orange] (C) at (0,1) {};
                \node[shape=circle,draw=orange] (D) at (1,1) {};
                \draw[color=orange] (A) -- (B) -- (C) -- (D) -- (A) -- (C);
                \draw[color=orange] (B) -- (D);
            \end{scope}
            \begin{scope}[shift={(2,0)}]
                \node[shape=circle,draw=black] (A) at (0,0) {};
                \node[shape=circle,draw=black] (B) at (1,0) {};
                \node[shape=circle,draw=orange] (C) at (0,1) {};
                \node[shape=circle,draw=black] (D) at (1,1) {};
                \draw (A) -- (B) -- (C) -- (D) -- (A) -- (C);
                \draw (B) -- (D);
            \end{scope}
            \begin{scope}[shift={(0,2)}]
                \node[shape=circle,draw=black] (A) at (0,0) {};
                \node[shape=circle,draw=orange] (B) at (1,0) {};
                \node[shape=circle,draw=black] (C) at (0,1) {};
                \node[shape=circle,draw=black] (D) at (1,1) {};
                \draw (A) -- (B) -- (C) -- (D) -- (A) -- (C);
                \draw (B) -- (D);
            \end{scope}
        \end{scope}
        \begin{scope}[shift={(0,6)}]
            \node[shape=circle,draw=black] (A) at (0,0) {};
            \node[shape=circle,draw=black] (B) at (1,0) {};
            \node[shape=circle,draw=black] (C) at (0,1) {};
            \node[shape=circle,draw=black] (D) at (1,1) {};
            \draw[color=black] (A) -- (B) -- (C) -- (D) -- (A) -- (C);
            \draw[color=black] (B) -- (D);
            \begin{scope}[shift={(1,1)}]
                \node[shape=circle,draw=black] (A) at (0,0) {};
                \node[shape=circle,draw=black] (B) at (1,0) {};
                \node[shape=circle,draw=black] (C) at (0,1) {};
                \node[shape=circle,draw=black] (D) at (1,1) {};
                \draw[color=black] (A) -- (B) -- (C) -- (D) -- (A) -- (C);
                \draw[color=black] (B) -- (D);
            \end{scope}
            \begin{scope}[shift={(2,2)}]
                \node[shape=circle,draw=black] (A) at (0,0) {};
                \node[shape=circle,draw=black] (B) at (1,0) {};
                \node[shape=circle,draw=black] (C) at (0,1) {};
                \node[shape=circle,draw=black] (D) at (1,1) {};
                \draw[color=black] (A) -- (B) -- (C) -- (D) -- (A) -- (C);
                \draw[color=black] (B) -- (D);
            \end{scope}
            \begin{scope}[shift={(2,0)}]
                \node[shape=circle,draw=black] (A) at (0,0) {};
                \node[shape=circle,draw=black] (B) at (1,0) {};
                \node[shape=circle,draw=black] (C) at (0,1) {};
                \node[shape=circle,draw=black] (D) at (1,1) {};
                \draw[color=black] (A) -- (B) -- (C) -- (D) -- (A) -- (C);
                \draw[color=black] (B) -- (D);
            \end{scope}
            \begin{scope}[shift={(0,2)}]
                \node[shape=circle,draw=black] (A) at (0,0) {};
                \node[shape=circle,draw=black] (B) at (1,0) {};
                \node[shape=circle,draw=black] (C) at (0,1) {};
                \node[shape=circle,draw=black] (D) at (1,1) {};
                \draw[color=black] (A) -- (B) -- (C) -- (D) -- (A) -- (C);
                \draw[color=black] (B) -- (D);
            \end{scope}
        \end{scope}
        \begin{scope}[shift={(6,0)}]
            \node[shape=circle,draw=black] (A) at (0,0) {};
            \node[shape=circle,draw=black] (B) at (1,0) {};
            \node[shape=circle,draw=black] (C) at (0,1) {};
            \node[shape=circle,draw=black] (D) at (1,1) {};
            \draw (A) -- (B) -- (C) -- (D) -- (A) -- (C);
            \draw (B) -- (D);
            \begin{scope}[shift={(1,1)}]
                \node[shape=circle,draw=black] (A) at (0,0) {};
                \node[shape=circle,draw=black] (B) at (1,0) {};
                \node[shape=circle,draw=orange] (C) at (0,1) {};
                \node[shape=circle,draw=black] (D) at (1,1) {};
                \node (E) at (0,1) {\small\color{orange} $x$};
                \draw (A) -- (B) -- (C) -- (D) -- (A) -- (C);
                \draw (B) -- (D);
            \end{scope}
            \begin{scope}[shift={(2,2)}]
                \node[shape=circle,draw=black] (A) at (0,0) {};
                \node[shape=circle,draw=black] (B) at (1,0) {};
                \node[shape=circle,draw=black] (C) at (0,1) {};
                \node[shape=circle,draw=black] (D) at (1,1) {};
                \draw (A) -- (B) -- (C) -- (D) -- (A) -- (C);
                \draw (B) -- (D);
            \end{scope}
            \begin{scope}[shift={(2,0)}]
                \node[shape=circle,draw=black] (A) at (0,0) {};
                \node[shape=circle,draw=black] (B) at (1,0) {};
                \node[shape=circle,draw=black] (C) at (0,1) {};
                \node[shape=circle,draw=black] (D) at (1,1) {};
                \draw (A) -- (B) -- (C) -- (D) -- (A) -- (C);
                \draw (B) -- (D);
            \end{scope}
            \begin{scope}[shift={(0,2)}]
                \node[shape=circle,draw=orange] (A) at (0,0) {};
                \node[shape=circle,draw=orange] (B) at (1,0) {};
                \node[shape=circle,draw=orange] (C) at (0,1) {};
                \node[shape=circle,draw=orange] (D) at (1,1) {};
                \draw[draw=orange] (A) -- (B) -- (C) -- (D) -- (A) -- (C);
                \draw[draw=orange] (B) -- (D);
            \end{scope}
        \end{scope}
    \end{tikzpicture}
    \caption{The point $x=(7,2)$ in $\Vic_2$ and its geodesic subgraph $\Gamma_2(x)$ coloured in orange.}
    \label{fig:geodsubgraph}
\end{figure}
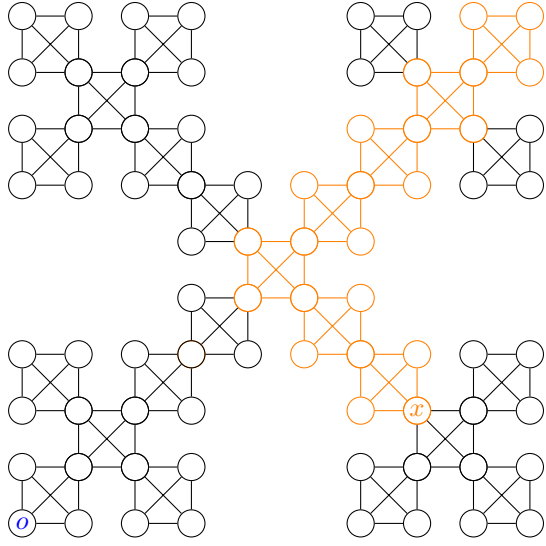
\begin{lem}\label{lem:stability-outside-geodsubgraph}
For any $n\in\N$, $x\in V_n$, and any recurrent sandpile $\eta$ on $\Vic_n$ we have
$$(\eta+\delta_x)^\circ|_{\Gamma_n(x)^{\complement}}=\eta|_{\Gamma_n(x)^{\complement}},$$
where $\Gamma_n(x)^{\complement}$ is the complement of $\Gamma_n(x)$.
\end{lem}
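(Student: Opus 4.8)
The plan is to generalize the ``loop'' mechanism behind Lemma~\ref{lem:diagonalization} from the descendant subgraphs $T^x_n$ to all side branches that hang off the geodesic corridor. First I would record the structural fact that drives everything: by the cut point structure of $\Vic_n$ (consecutive copies of $K_4$ meet in a single vertex, which is then a cut vertex of $\Vic_n$), the corridor $\geodsubgraph_n(x)$ is a connected union of copies of $K_4$ threaded along $\gamma_x^{(n)}$, and every connected component $W$ of $\Vic_n\setminus\geodsubgraph_n(x)$ is attached to the rest of the graph through a single cut vertex $c=c(W)\in V(\geodsubgraph_n(x))$; in particular $W$ is one of the connected components of $\Vic_n\setminus\{c\}$. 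The descendant set $\descset_n(x)$ is precisely the union of those components whose attaching cut vertex is $x$ itself (for $x\in\diag_n^1$ this recovers $T^x_n$).

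The second ingredient is the elementary fact, which I would derive directly from Dhar's burning algorithm, that the restriction $\eta|_W$ of a recurrent configuration to such a branch is recurrent on the subgraph of $\Vic_n$ induced by $W\cup\{c\}$ with sink $c$: running the burning from $s_n$ on $\Vic_n$, the fire reaches $c$ and then spreads through all of $W$, and for vertices of $W$ the burning condition only involves edges inside $W\cup\{c\}$, so this is exactly the burning test for $\eta|_W$ with sink $c$.

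With these in hand the argument mirrors the proof of Lemma~\ref{lem:diagonalization}. Using the Abelian property I would organise the stabilization of $\eta+\delta_x$ so that, whenever the cut vertex $c=c(W)$ of some branch topples, we immediately perform all legal topplings inside $W$ before touching $c$ again; since $W$ is fed only through $c$ and $x\in\geodsubgraph_n(x)$, the configuration on $W$ equals $\eta|_W$ right before $c$ topples for the first time. Each toppling of $c$ deposits exactly one chip on each neighbour of $c$ in $W$, so by Dhar's burning algorithm applied to the branch every vertex of $W$ topples exactly once, the configuration on $W$ is restored to $\eta|_W$, and precisely as many chips return to $c$ as were sent into $W$. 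Inducting on the number of topplings of $c$ shows that the final configuration on $W$ is $\eta|_W$, and since this holds for every component $W$ of $\Vic_n\setminus\geodsubgraph_n(x)$ we conclude $(\eta+\delta_x)^\circ|_{\geodsubgraph_n(x)^{\complement}}=\eta|_{\geodsubgraph_n(x)^{\complement}}$.

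The main obstacle is the bookkeeping in the first paragraph: one must verify carefully, from the recursive construction of $\Vic_n$, that the complement of the geodesic subgraph really is a disjoint union of single-cut-vertex branches, and note that although $\gamma_x^{(n)}$ need not be unique, any choice works, since the left-hand side $(\eta+\delta_x)^\circ$ does not depend on it while every branch off any valid corridor is ``neutral'' by the loop argument. Everything after that is a routine adaptation of Lemma~\ref{lem:diagonalization} together with the standard restriction property of recurrent configurations.
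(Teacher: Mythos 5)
Your argument is correct and follows exactly the route the paper intends: the paper states this lemma without a written proof, asserting only that it ``follows immediately from Lemma~\ref{lem:diagonalization} by exploiting the cut point structure of the Vicsek graphs and invoking Dhar's burning algorithm'', and your proposal is precisely that argument carried out in detail (each component of $\Gamma_n(x)^{\complement}$ hangs off a single cut vertex, its restriction is recurrent with that cut vertex as sink, and the burning algorithm makes it act as a neutral loop). No discrepancies to report.
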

Next we upper bound the order of the equivalence classes $[\delta_x]$.
\begin{lem}\label{lem:order-upper-bound-delta}
For every $n\in\N$, $x\in V_n$ and all recurrent sandpiles $\eta$ on $\Vic_n$ we have
    \begin{align*}
        (\eta+4\cdot\delta_x)^\circ=\eta.
    \end{align*}
\end{lem}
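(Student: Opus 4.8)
The plan is to reduce the claim to the combinatorial situation already treated in the proof of Lemma~\ref{lem:4-particles-to-origin}, by passing to the geodesic subgraph $\Gamma_n(x)$. First I would apply Lemma~\ref{lem:stability-outside-geodsubgraph} four times: writing $(\eta+4\delta_x)^\circ$ as the result of adding the four chips one at a time and stabilizing after each, every intermediate configuration is again recurrent, so none of the four stabilizations alters the configuration on $\Gamma_n(x)^{\complement}$ (the geodesic subgraph depends only on $x$ and $n$, not on the recurrent configuration). Hence $(\eta+4\delta_x)^\circ$ and $\eta$ already agree off $\Gamma_n(x)$, and it suffices to show $(\eta+4\delta_x)^\circ\big|_{\Gamma_n(x)}=\eta\big|_{\Gamma_n(x)}$.

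Next I would use that $\Gamma_n(x)$ is, as described after Definition~\ref{defn:diagonal-graph}, a chain of complete graphs $K_4^{(1)},\dots,K_4^{(m)}$, consecutive copies sharing a single cut vertex, with $x$ a corner of $K_4^{(1)}$ and the sink $s_n$ a corner of $K_4^{(m)}$. Exactly as in Lemma~\ref{lem:diagonalization}, each connected component of $\Vic_n\setminus\Gamma_n(x)$ is attached to $\Gamma_n(x)$ through a single vertex, and hence is inert: by Dhar's burning algorithm it topples a full cycle only after its attachment vertex topples, and it then returns to that vertex every chip it received. Therefore the stabilization of $\eta+4\delta_x$, observed on $\Gamma_n(x)$, is governed by exactly the same process as the stabilization of a recurrent configuration plus four chips placed at the source corner of a chain of $K_4$'s, with $\Gamma_n(x)$, $x$ and $s_n$ playing the roles of $D_n$, $o=(0,0)$ and $s_n$ in the proof of Lemma~\ref{lem:4-particles-to-origin}; here one also uses, via the burning bijection, that $\eta$ restricts on each $K_4^{(i)}$ to a recurrent configuration of $K_4$ relative to its sink-side corner, together with the three virtual extra particles carried by every degree-six vertex, just as for $D_n$.

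With this reduction in hand I would conclude by the recursive argument of Lemma~\ref{lem:4-particles-to-origin} applied to the chain: adding four chips at $x$ and stabilizing $K_4^{(1)}$ restores $\eta\big|_{K_4^{(1)}}$ and passes four chips to the cut vertex $K_4^{(1)}\cap K_4^{(2)}$, which are then four extra chips at the source corner of $K_4^{(2)}$; repeating this along the chain restores $\eta$ on all of $\Gamma_n(x)$ and deposits four chips at $s_n$, which leave the system, giving $(\eta+4\delta_x)^\circ=\eta$. The main obstacle is the reduction of the second paragraph: one must verify carefully, from the single cut-point structure of the Vicsek graphs and Dhar's burning algorithm, that the branches hanging off $\Gamma_n(x)$ genuinely decouple from the dynamics on the chain and that the degree-six internal cut vertices behave in the stabilization exactly as the vertices $(i,i)$ did in Lemma~\ref{lem:4-particles-to-origin}; once that is established, no additional computation is needed.
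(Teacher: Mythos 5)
Your proposal is correct and follows essentially the same route as the paper: reduce to the geodesic subgraph $\Gamma_n(x)$ via Lemma \ref{lem:stability-outside-geodsubgraph}, observe that $\Gamma_n(x)$ is a chain of $K_4$'s with inert branches hanging off the cut vertices, and then run the recursive "carry four chips along the chain" argument of Lemma \ref{lem:4-particles-to-origin}. You merely spell out in more detail the decoupling of the off-chain components and the one-chip-at-a-time application of Lemma \ref{lem:stability-outside-geodsubgraph}, both of which the paper leaves implicit.
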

\begin{proof}
 By Lemma \ref{lem:stability-outside-geodsubgraph}, it suffices to consider only recurrent sandpiles on the geodesic subgraph $\geodsubgraph_n(x)$. Then the proof works as in Lemma \ref{lem:4-particles-to-origin}.
\end{proof}
\begin{prop}\label{prop:order-upper-bound}
For every $n\in\N$ and any recurrent sandpile $\eta\in\mathcal{R}_n$ we have
    \begin{align*}
        \order_{\mathcal{R}_{n}}([\eta])\in \{2,4\}.
    \end{align*}
\end{prop}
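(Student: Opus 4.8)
The plan is to deduce the bound on the order of an arbitrary recurrent sandpile from the bound already established for the single-site configurations $\delta_x$. The key algebraic observation is that the classes $[\delta_x]$, $x\in V_n$, generate the whole group $\mathcal{R}_n\cong\Z^{V_n}/\Delta\Z^{V_n}$: every element of $\Z^{V_n}$ is an integer combination $\sum_{x\in V_n} a_x\delta_x$, so passing to the quotient shows that $[\eta]=\sum_{x\in V_n} a_x[\delta_x]$ whenever $\eta=\sum_{x\in V_n} a_x\delta_x$. Thus $\mathcal{R}_n$ is generated by $\{[\delta_x]:x\in V_n\}$.

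First I would invoke Lemma \ref{lem:order-upper-bound-delta}, which gives $(\eta+4\cdot\delta_x)^\circ=\eta$ for every recurrent $\eta$ and every $x\in V_n$; in group-theoretic terms this says $4\cdot[\delta_x]=[0]$, i.e.\ $\order_{\mathcal{R}_n}([\delta_x])$ divides $4$. Since a (finite) abelian group generated by elements whose orders divide $4$ has exponent dividing $4$, we conclude $4\cdot[\eta]=[0]$ for every $\eta\in\mathcal{R}_n$, and hence $\order_{\mathcal{R}_n}([\eta])\in\{1,2,4\}$. Excluding the identity class $[0]$ (whose recurrent representative is the maximal stable configuration), the order is $2$ or $4$, which is the assertion; the first step of the program — that orders must be powers of $2$, since $|\mathcal{R}_n|=16^{5^n}=2^{4\cdot 5^n}$ by the matrix-tree theorem — is consistent with this and in fact automatic here.

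I do not expect a genuine obstacle in this step: all the real work sits in Lemma \ref{lem:order-upper-bound-delta} (hence in Lemma \ref{lem:stability-outside-geodsubgraph} and the $K_4$-gluing argument of Lemma \ref{lem:4-particles-to-origin}), and what remains is the elementary fact that the exponent of a finite abelian group divides the least common multiple of the orders of a generating set. The only point meriting one line of care is the claim that the classes $[\delta_x]$ generate $\mathcal{R}_n$, which is immediate from the presentation $\mathcal{R}_n\cong\Z^{V_n}/\Delta\Z^{V_n}$ recalled in Section \ref{sec:prelim}.
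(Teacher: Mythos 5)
Your argument is correct and is essentially the paper's own proof: the paper likewise writes $[\eta]=\sum_{x}\eta(x)[\delta_x]$ and uses $4\cdot[\delta_x]=[0]$ from Lemma \ref{lem:order-upper-bound-delta} to conclude $4\cdot[\eta]=[0]$, which is just a concrete instance of your ``generators of order dividing $4$ force exponent dividing $4$'' observation. The only quibble is your parenthetical identifying the recurrent representative of $[0]$ with the maximal stable configuration --- true on $\Vic_0=K_4$ but false for $n\geq 1$ (see Figure \ref{fig:identity_3}) --- though this plays no role in the argument.
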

\begin{proof}
In view of Lemma \ref{lem:order-upper-bound-delta}, for $x\in V_n$ it holds $4\cdot [\delta_x]=[0]$. Thus for $\eta\in\mathcal{R}_n$ we have
\begin{align*}
 4\cdot[\eta]&=\sum_{x\in V_n\backslash\{s_n\}}4\cdot\eta(x)\cdot[\delta_x]
 =\sum_{x\in V_n\backslash\{s_n\}}\eta(x)\big(4\cdot[\delta_x]\big)\\
  &=\sum_{x\in V_n\backslash\{s_n\}}\eta(x)\cdot[0]
  =\sum_{x\in V_n\backslash\{s\}}[0]=[0],
\end{align*}
and this finishes the proof.
\end{proof}

\subsection{The identity element of the sandpile group $\mathcal{R}_n$}

The identity element of the sandpile group has been investigated in a series of works \cite{dhar-asm-1995,caracciolo-identity-2008, borgne-identity-2002} and in \cite{Pegden_2013} the authors focus on the scaling limit of the single-source sandpile model  on $\Z^2$. The scaling limit of the identity element of the sandpile group on $\Z^2$ was characterized as a consequence of \cite{levine-pegden-smart-2016,levine-pegden-smart-2017} and \cite{pegden-smart-2020}. There is also an explicit formula for the identity element on the $F$-lattice given in \cite{paoletti2012} which was recently connected to the scaling limit in \cite{bou-rabee-2024}.
In \cite{sandpile-group-gasket, scaling_identity} the identity element  of the sandpile group and its scaling limit on Sierpi\'nski gasket graphs has been investigated. We analyse below the identities of the sandpile groups $\mathcal{R}_n$ over Vicsek graphs $\Vic_n$, $n\in \N$.

For every $n\in\N$, we denote the five copies of $\Vic_{n-1}$ that are used to construct $\Vic_n$ and the corresponding cutpoints  with their coordinates in $\mathbb{R}^2$ by
\begin{align*}
    \Vic_{n-1}^\mathsf{LB} &= \Vic_{n-1}, &&c_\mathsf{LB}=(3^{n-1},3^{n-1}), \\ 
    \Vic_{n-1}^\mathsf{RB} &= (2\cdot3^{n-1},0) + \Vic_{n-1}, &&c_\mathsf{RB}=(2\cdot 3^{n-1},3^{n-1}), \\
    \Vic_{n-1}^\mathsf{RT} &= (2\cdot3^{n-1},2\cdot3^{n-1}) + \Vic_{n-1}, &&c_\mathsf{RT}=(2\cdot 3^{n-1},2 \cdot 3^{n-1}), \\
    \Vic_{n-1}^\mathsf{LT} &= (0,2\cdot3^{n-1}) + \Vic_{n-1},&&c_\mathsf{LT}=(3^{n-1},2\cdot 3^{n-1}), \\
    \Vic_{n-1}^\mathsf{M} &= (3^{n-1},3^{n-1}) + \Vic_{n-1}. 
\end{align*}
For any of the above graphs we denote by $V_{n-1}^l$ with  $l\in\{\mathsf{LB}, \mathsf{RB}, \mathsf{RT}, \mathsf{LT}, \mathsf{M}\}$  their corresponding vertex sets.
For any five given recurrent sandpile configurations $\eta_\mathsf{LB}, \eta_\mathsf{RB}, \eta_\mathsf{RT}, \eta_\mathsf{LT}, \eta_\mathsf{M}$ on $\Vic_{n-1}$ we define for  $k\in\N$ the merged configuration  $\mu_k(\eta_\mathsf{LB}, \eta_\mathsf{RB}, \eta_\mathsf{RT}, \eta_\mathsf{LT}, \eta_\mathsf{M})$ on $\Vic_n$ as follows. Let 
$\phi_{n-1}:V_{n-1} \rightarrow V_{n-1}$ be the counterclockwise rotation of $V_{n-1}$ by 90 degrees and for $l\in\{\mathsf{LB}, \mathsf{RB}, \mathsf{RT}, \mathsf{LT}, \mathsf{M}\}$, let  $\tau_{l}:V_{n}^l\rightarrow V_{n-1}$ be the translation by the vector $c_l-c_\mathsf{LB}$, i.e.~moving $\Vic_{n-1}^l$ back into the position of $\Vic_{n-1}$ by subtracting $c_l-c_\mathsf{LB}$ from every vertex in $\Vic_{n-1}^l$. Define $\mu_k:\Vic_n\to\N$ as
\begin{align*}
        \mu_k(x) = \begin{cases}
            \eta_{\mathsf{LB}}(x) \ &\text{if} \ x \in V_{n-1}^\mathsf{LB} \backslash \{c_\mathsf{LB}\},\\
            \eta_\mathsf{RB}(\phi_{n-1}(\tau_\mathsf{RB}(x)) \ &\text{if} \ x\in V_{n-1}^\mathsf{RB}\backslash \{c_\mathsf{RB}\},\\
            \eta_\mathsf{RT}(\tau_\mathsf{LT}(x)) \ &\text{if} \ x\in V_{n-1}^\mathsf{RT}\backslash \{c_\mathsf{RT}\},\\
            \eta_\mathsf{LT}(\phi^3_{n-1}(\tau_\mathsf{LT}(x))) \ &\text{if} \ x\in V_{n-1}^\mathsf{LT}\backslash \{c_\mathsf{LT}\},\\
            \eta_\mathsf{M}(\tau_\mathsf{M}(x)) \ &\text{if} \ x\in V_{n-1}^\mathsf{M}\backslash \{c_\mathsf{LB}, c_\mathsf{RB}, c_\mathsf{RT}, c_\mathsf{LT}\},\\
            k + \eta_\mathsf{M}(\tau_\mathsf{M}(x)) \ &\text{if} \ x\in \{c_\mathsf{LB}, c_\mathsf{RB}, c_\mathsf{LT}\}, \\
            k + \eta_\mathsf{RT}(\tau_\mathsf{RT}(x)) \ &\text{if} \ x=c_\mathsf{RT},
        \end{cases}
\end{align*}
where $\phi^3_{n-1}$ is simply the threefold composition of the rotation $\phi_{n-1}$, i.e.~a rotation by 90 degrees clockwise.
See Figure \ref{fig:arrangement} for an illustration of $\mu_k(\eta_\mathsf{LB}, \eta_\mathsf{RB}, \eta_\mathsf{RT}, \eta_\mathsf{LT}, \eta_\mathsf{M})$. In view of the burning algorithm, the merged configuration for $k=3$ is also a recurrent sandpile. The next lemma gives a construction of the identity on $\Vic_n$ via merging the identities on five copies of $\Vic_{n-1}$. For an illustration of $\id_3$ see Figure \ref{fig:identity_3}.
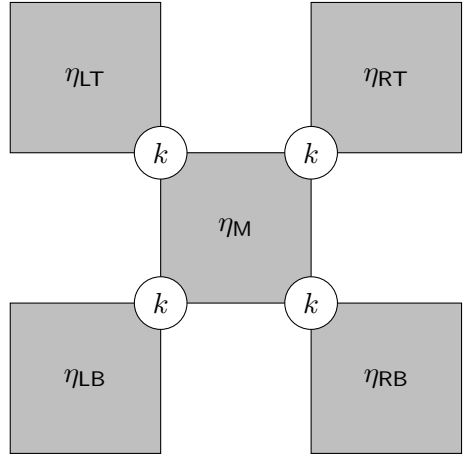
\begin{figure}[h]
    \centering
    \begin{tikzpicture}
        \draw[fill = lightgray] (0,0) rectangle ++(2,2) node[pos=.5] {$\eta_\mathsf{LB}$};
        \draw[fill = lightgray] (2,2) rectangle ++(2,2) node[pos=.5] {$\eta_\mathsf{M}$};
        \draw[fill = lightgray] (4,0) rectangle ++(2,2) node[pos=.5] {$\eta_\mathsf{RB}$};
        \draw[fill = lightgray] (4,4) rectangle ++(2,2) node[pos=.5] {$\eta_\mathsf{RT}$};
        \draw[fill = lightgray] (0,4) rectangle ++(2,2) node[pos=.5] {$\eta_\mathsf{LT}$};
        \node[shape=circle,draw=black, fill=white] (A) at (2,2) {$k$};
        \node[shape=circle,draw=black, fill=white] (B) at (4,2) {$k$};
        \node[shape=circle,draw=black, fill=white] (C) at (4,4) {$k$};
        \node[shape=circle,draw=black, fill=white] (D) at (2,4) {$k$};
    \end{tikzpicture}
    \caption{The arrangement of the merged sandpile configuration $\mu_k$.}
    \label{fig:arrangement}
\end{figure}
\begin{lem}\label{lem:id-construction}
If $\id_{n-1}$ is the identity element of the sandpile group $\mathcal{R}_{n-1}$ of $\Vic_{n-1}$, then the identity $\id_n$ of the sandpile group $\mathcal{R}_n$ of $\Vic_{n}$ can be constructed by merging five copies of $\id_{n-1}$ in the following manner:
    \begin{align*}
        \id_n = \mu_2(\id_{n-1},\id_{n-1},\id_{n-1},\id_{n-1},\id_{n-1}).
    \end{align*}
\end{lem}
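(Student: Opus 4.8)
The plan is to verify, for every $n\ge 1$, that $M:=\mu_2(\id_{n-1},\dots,\id_{n-1})$ enjoys the two properties that pin down the identity of $\mathcal{R}_n$: $M$ is recurrent on $\Vic_n$, and $M\approx 0$ (equivalently, $M$ is the unique recurrent idempotent for $\oplus$). The only facts used about $\id_{n-1}$ are that it is recurrent on $\Vic_{n-1}$ and that $\id_{n-1}\approx 0$.

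\emph{Recurrence of $M$.} I would run Dhar's burning algorithm on $\Vic_n$ with sink $s_n$ and follow the fire copy by copy. Because the placement map in the definition of $\mu_2$ puts the sink of $\id_{n-1}$ at the position of $s_n$, the fire sweeps through the copy $\Vic_{n-1}^{\mathsf{RT}}$ exactly as the burning of $\Vic_{n-1}$ from its own sink; since $\id_{n-1}$ is recurrent this copy burns completely and every vertex, in particular the cutpoint $c_{\mathsf{RT}}$, topples once. The single toppling of $c_{\mathsf{RT}}$ drops one chip on each of its three neighbours in $\Vic_{n-1}^{\mathsf{M}}$, which is precisely the burning start for $\Vic_{n-1}^{\mathsf{M}}$ rooted at $c_{\mathsf{RT}}$; the offset $+2$ carried by $\mu_2$ at $c_{\mathsf{LB}},c_{\mathsf{RB}},c_{\mathsf{LT}}$ is exactly what compensates the jump of their degree from $3$ to $6$, so that they too burn and topple once. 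Iterating, $c_{\mathsf{LB}},c_{\mathsf{RB}},c_{\mathsf{LT}}$ each ignite their leaf copies, which burn completely by recurrence of $\id_{n-1}$; thus every vertex of $\Vic_n$ burns and $M$ is recurrent. The delicate point is that $k=2$ — not the $k=3$ of the remark preceding the lemma, nor $k=1$ — is the unique offset for which each cutpoint burns and topples exactly once; checking this is where the cut-point structure of $\Vic$ and the lockstep toppling of descendants used in Lemma~\ref{lem:diagonalization} enter.

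\emph{The class of $M$.} From $\id_{n-1}\approx 0$ there is a unique nonnegative integer vector $\hat b$ on $\Vic_{n-1}$ with $\Delta_{n-1}\hat b=\id_{n-1}$ — concretely the toppling vector of the stabilisation of $\id_{n-1}+\id_{n-1}$. On $\Vic_n$ I would assemble $a_0$ from five suitably rotated and translated copies of $\hat b$, one per block, each oriented so that the vertex acting as that block's sink ($s_n$ for $\mathsf{RT}$, $c_{\mathsf{RT}}$ for $\mathsf{M}$, and the unique cutpoint for each of $\mathsf{LB},\mathsf{RB},\mathsf{LT}$) carries the sink-index of $\hat b$. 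A direct computation of $\Delta_n a_0$ shows it already equals $M$ at every non-cutpoint vertex, and that at a cutpoint $c_l$ it falls short of $M(c_l)$ by a fixed amount, so that $M-\Delta_n a_0$ is supported on the cutpoints (up to a term at $s_n$) and in fact $M-\Delta_n a_0=2\sum_l\delta_{c_l}$. Since $\Delta_n a_0\approx 0$, this yields $[M]=2\sum_l[\delta_{c_l}]$, and it remains to prove $2[\delta_{c_l}]=[0]$, i.e.\ that each cutpoint of $\Vic_n$ has order dividing $2$ in $\mathcal{R}_n$. I expect this to be the main obstacle; it should come out of the same order analysis used for Theorem~\ref{thm:sandpile-group}, since a cutpoint's two incident copies stabilise symmetrically, so $2\delta_{c_l}$ can be undone by topplings respecting the reflection across $c_l$. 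An alternative that avoids the cutpoint-order fact is to establish $M\oplus M=M$ directly: stabilise $M+M$ with the Abelian property block by block, first toppling only interior vertices of each block (using $\id_{n-1}\oplus\id_{n-1}=\id_{n-1}$ and the fact that a leaf copy returns all incoming mass through its cutpoint, exactly as in Lemma~\ref{lem:diagonalization}), then resolving the cutpoints and checking that the surplus flowing toward $s_n$ leaves $M$ unchanged. In either case, recurrence of $M$ together with $M\approx0$ forces $M=\id_n$; the base instance $n=1$, where $\id_0$ is the configuration equal to $2$ at each non-sink vertex of $\Vic_0=K_4$, is checked directly.
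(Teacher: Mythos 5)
Your overall plan (show that $M=\mu_2(\id_{n-1},\dots,\id_{n-1})$ is recurrent and lies in the class of $0$) is a legitimate strategy and genuinely different from the paper's, which instead stabilizes $4\cdot\mu_3(\id_{n-1},\dots,\id_{n-1})$ block by block, using that $(4\eta)^\circ=\id_n$ for recurrent $\eta$ and carrying the auxiliary induction hypothesis that stabilizing $4\id_m$ sends $2\bmod 4$ particles to the sink. However, both halves of your argument have real gaps. On recurrence: the claim that $k=2$ is ``the unique offset for which each cutpoint burns and topples exactly once'' is false --- the paper points out that $\mu_3$ of arbitrary recurrent configurations is recurrent, so $k=3$ (the degree of the corner inside the leaf copy) is the generic offset that passes the burning test. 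The offset $+2$ does not ``compensate'' a degree jump from $3$ to $6$; that jump is $3$, and $\mu_2(\id_{n-1},\dots)$ is recurrent only because $\id_{n-1}$ equals $2$ on the corner copies of $K_4$, so that $\mu_2(\id_{n-1},\dots)=\mu_3(\id_{n-1}-\delta_{\mathrm{corner}},\dots)$ with the decremented configurations still recurrent. That corner-value fact has to be carried along in the induction (the paper does this), and you never invoke it, so your burning argument does not close.

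The more serious gap is the step $2[\delta_{c_l}]=[0]$: it is false. Take $n=1$ and $c=c_\mathsf{RT}=(2,2)$. Since the middle and the three leaf copies attach to the top-right $K_4$ only through $(2,2)$, the entry of $\mathrm{adj}(\Delta')$ at $\big((3,2),(2,2)\big)$ (counting two-component spanning forests with $(3,2)$ and $(2,2)$ in the non-sink component) equals $16^4\cdot 4$, while $\det\Delta'=16^5$; hence the $(3,2)$-coordinate of $2(\Delta')^{-1}\delta_{(2,2)}$ is $1/2\notin\Z$, so $2\delta_{c_\mathsf{RT}}\not\approx 0$ and in fact $\order([\delta_{c_\mathsf{RT}}])=4$ in $\mathcal{R}_1$ (equivalently, by the wedge decomposition at $c_\mathsf{RT}$, its order equals that of $[\delta_{(0,0)}]$ in the sandpile group of $K_4$, which is $4$). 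So the reflection-symmetry heuristic cannot work. At best one could hope for $2\sum_l[\delta_{c_l}]=[0]$, which you do not prove and which is essentially equivalent to the statement you are trying to establish, given your identity $M-\Delta_n a_0=2\sum_l\delta_{c_l}$ --- an identity that is itself only asserted, and whose verification requires resolving the clash of the five copies of $\hat b$ at the cutpoints (where one block sees the vertex as its sink and the adjacent block assigns it a nonzero toppling number). Your fallback of proving $M\oplus M=M$ directly would indeed suffice, but the cutpoint bookkeeping you defer there is precisely the content of the paper's proof, which settles it via the $2\bmod 4$ particle-count induction.
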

\begin{proof}
The proof can be done by induction over the iteration $n\in\N$ with the additional induction statement that during the stabilization of the configuration $4 \id_n$ on $\Vic_n$, the number of particles accumulated at the sink $s_n$ is $2\mod 4$. The base case is straightforward since $\id_0 = 2 \mathds{1}_{\{s_0\}^\complement}$. 
 By Proposition \ref{prop:order-upper-bound}, for a recurrent configuration $\eta$ on $\Vic_n$ we have $(4 \eta )^\circ = \id_n$. For the inductive step we therefore stabilize $4 \mu_3(\id_{n-1},\id_{n-1},\id_{n-1},\id_{n-1},\id_{n-1})$, and notice that 
 \begin{align*}
 4 \mu_3(\id_{n-1},\id_{n-1},\id_{n-1},\id_{n-1},\id_{n-1}) & =  
 \mu_3(4\id_{n-1},4\id_{n-1},4\id_{n-1},4\id_{n-1},4\id_{n-1}) \\
 & + 9  (\delta_{c_\mathsf{LB}} + \delta_{c_\mathsf{RB}} + \delta_{c_\mathsf{RT}} + \delta_{c_\mathsf{LT}})
\end{align*} 
where the 9 additional particles at the cutpoints $c_\mathsf{LB}, c_\mathsf{RB}, c_\mathsf{RT}$ and $c_\mathsf{LT}$ are due to the fact that after each merging of configurations, 3 particles in each cutpoint are added. See the left hand side of Figure \ref{fig:first-step-identity} for an illustration of the merged configuration.
\begin{figure}
    \begin{align*}
    \begin{tikzpicture}
        \draw[fill = lightgray] (0,0) rectangle ++(2,2) node[pos=.5] {$4\id_{n-1}$};
        \draw[fill = lightgray] (2,2) rectangle ++(2,2) node[pos=.5] {$4\id_{n-1}$};
        \draw[fill = lightgray] (4,0) rectangle ++(2,2) node[pos=.5] {$4\id_{n-1}$};
        \draw[fill = lightgray] (4,4) rectangle ++(2,2) node[pos=.5] {$4\id_{n-1}$};
        \draw[fill = lightgray] (0,4) rectangle ++(2,2) node[pos=.5] {$4\id_{n-1}$};
        \node[shape=circle,draw=black, fill=white] (A) at (2,2) {12};
        \node[shape=circle,draw=black, fill=white] (B) at (4,2) {12};
        \node[shape=circle,draw=black, fill=white] (C) at (4,4) {12};
        \node[shape=circle,draw=black, fill=white] (D) at (2,4) {12};
    \end{tikzpicture} & &
        \begin{tikzpicture}
        \draw[fill = lightgray] (0,0) rectangle ++(2,2) node[pos=.5] {$\id_{n-1}$};
        \draw[fill = lightgray] (2,2) rectangle ++(2,2) node[pos=.5] {$4 \id_{n-1}$};
        \draw[fill = lightgray] (4,0) rectangle ++(2,2) node[pos=.5] {$\id_{n-1}$};
        \draw[fill = lightgray] (4,4) rectangle ++(2,2) node[pos=.5] {$4 \id_{n-1}$};
        \draw[fill = lightgray] (0,4) rectangle ++(2,2) node[pos=.5] {$\id_{n-1}$};
        \node[shape=circle,draw=black, fill=white] (A) at (2,2) {6};
        \node[shape=circle,draw=black, fill=white] (B) at (4,2) {6};
        \node[shape=circle,draw=black, fill=white] (C) at (4,4) {4};
        \node[shape=circle,draw=black, fill=white] (D) at (2,4) {6};        
    \end{tikzpicture}
    \end{align*}
 \caption{Stabilization of the subconfigurations of $4 \mu_3(\id_{n-1},\id_{n-1},\id_{n-1},\id_{n-1},\id_{n-1})$ in the three copies of $\Vic_{n-1}$ in the lower left and right as well as upper left. The configuration $4 \mu_3(\id_{n-1},\id_{n-1},\id_{n-1},\id_{n-1},\id_{n-1})$ is on the left and the result after stabilizing the subconfigurations on the right.}
 \label{fig:first-step-identity}
 \end{figure}
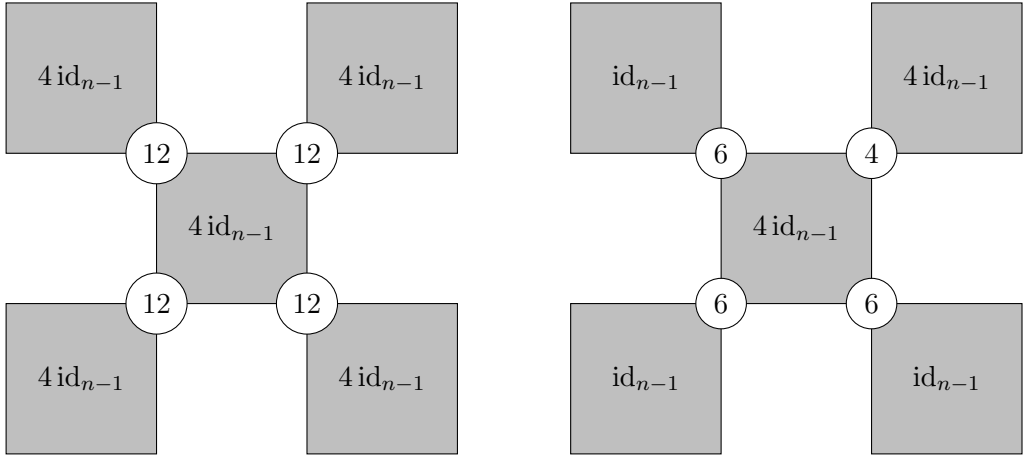
 By stabilizing $4 \id_{n-1}$ on the three subgraphs $\Vic_{n-1}^\mathsf{LB}$, $\Vic_{n-1}^\mathsf{RB}$ and $\Vic_{n-1}^\mathsf{LT}$, we first collect additional $2\mod 4$ particles in each of the three corresponding cutpoints $c_\mathsf{LB}, c_\mathsf{RB},c_\mathsf{LT}$ of the subgraphs respectively. Stabilizing sandpiles that are multiples of 4 at cutpoints, by Lemma \ref{lem:stability-outside-geodsubgraph} results in configuration $\id_{n-1}$ on the three subgraphs $\Vic_{n-1}^\mathsf{LB}$, $\Vic_{n-1}^\mathsf{RB}$, $\Vic_{n-1}^\mathsf{LT}$, and $6 = 12 + 2 \mod 4$ particles at the corresponding cutpoints. Figure \ref{fig:first-step-identity} shows the resulting configuration on the right hand side.
For stabilizing the central copy, we first recall that by Dhar's burning algorithm, we can consider the configuration in this copy as a usual configuration on $\Vic_{n-1}$ when removing (pinning) three particles at each of the cutpoint. With this in mind, for $\Vic_{n-1}^\mathsf{M}$ we have 
 \begin{align*}
        [4 \id_{n-1} + 3 (\delta_{c_\mathsf{LB}} + \delta_{c_\mathsf{RB}} + \delta_{c_\mathsf{LT}})] = [\id_{n-1}] - [\delta_{c_\mathsf{LB}} + \delta_{c_\mathsf{RB}} + \delta_{c_\mathsf{LT}}] = [\id_{n-1} - \delta_{c_\mathsf{LB}} - \delta_{c_\mathsf{RB}} - \delta_{c_\mathsf{LT}}],
    \end{align*}
and during stabilization $2 \mod 4$ particles are collected in the sink $(2\cdot 3^{n-1},2\cdot 3^{n-1})$. Recall that there are $3$ additional particles in the cutpoints $c_\mathsf{LT}$, $c_\mathsf{LB}$, and $c_\mathsf{RB}$  we pinned for stabilization, resulting in a total of 2 additional particles in $c_\mathsf{LT}$, $c_\mathsf{LB}$, and $c_\mathsf{RB}$ respectively, as illustrated on the left hand side of Figure \ref{fig:second-step-identity}.
Above we have used that $\id_{n-1}-\delta_{(0,0)}-\delta_{(3^{n-1},0)}-\delta_{(0,3^{n-1})}$ is recurrent on $\Vic_{n-1}$, since by induction $\id_{n-1}$ is constant two on the copies of $K_4$ on the lower left, lower right and upper left corner. Subtracting $1$ particle from the bottom left vertex from the constant two configuration on $K_4$, yields again a recurrent sandpile, as can be seen from Figure \ref{fig:recurrentK4}.
Finally using the same arguments as before, for the top right subgraph $\Vic_{n-1}^\mathsf{RT}$ we have 
    \begin{align*}
        [4 \id_{n-1} + 3\delta_{c_\mathsf{RT}}] = [\id_{n-1}] - [\delta_{c_\mathsf{RT}}],
    \end{align*}
    resulting in the final configuration $\mu_2(\id_{n-1},\id_{n-1},\id_{n-1},\id_{n-1},\id_{n-1})$ displayed in Figure \ref{fig:second-step-identity} and during stabilization $2 \mod 4$ particles are collected in the sink, and this concludes the inductive step and thus the proof of the claim.
\end{proof}
 \begin{figure}
        \begin{align*}
    \begin{tikzpicture}
        \draw[fill = lightgray] (0,0) rectangle ++(2,2) node[pos=.5] {$\id_{n-1}$};
        \draw[fill = lightgray] (2,2) rectangle ++(2,2) node[pos=.5] {$\id_{n-1}$};
        \draw[fill = lightgray] (4,0) rectangle ++(2,2) node[pos=.5] {$\id_{n-1}$};
        \draw[fill = lightgray] (4,4) rectangle ++(2,2) node[pos=.5] {$4 \id_{n-1}$};
        \draw[fill = lightgray] (0,4) rectangle ++(2,2) node[pos=.5] {$\id_{n-1}$};
        \node[shape=circle,draw=black, fill=white] (A) at (2,2) {2};
        \node[shape=circle,draw=black, fill=white] (B) at (4,2) {2};
        \node[shape=circle,draw=black, fill=white] (C) at (4,4) {6};
        \node[shape=circle,draw=black, fill=white] (D) at (2,4) {2};        
    \end{tikzpicture} & &
        \begin{tikzpicture}
        \draw[fill = lightgray] (0,0) rectangle ++(2,2) node[pos=.5] {$\id_{n-1}$};
        \draw[fill = lightgray] (2,2) rectangle ++(2,2) node[pos=.5] {$\id_{n-1}$};
        \draw[fill = lightgray] (4,0) rectangle ++(2,2) node[pos=.5] {$\id_{n-1}$};
        \draw[fill = lightgray] (4,4) rectangle ++(2,2) node[pos=.5] {$\id_{n-1}$};
        \draw[fill = lightgray] (0,4) rectangle ++(2,2) node[pos=.5] {$\id_{n-1}$};
        \node[shape=circle,draw=black, fill=white] (A) at (2,2) {2};
        \node[shape=circle,draw=black, fill=white] (B) at (4,2) {2};
        \node[shape=circle,draw=black, fill=white] (C) at (4,4) {2};
        \node[shape=circle,draw=black, fill=white] (D) at (2,4) {2};        
    \end{tikzpicture}
    \end{align*}
\caption{The configuration on the left shows $\mu_3(\id_{n-1},\id_{n-1},\id_{n-1},\id_{n-1},\id_{n-1})$ after it has been stabilized in the lower left, lower right, middle and upper left copy of $\Vic_{n-1}$, after which $2$ particles have been collected at the cutpoints $c_\mathsf{RB}, c_\mathsf{LB},c_\mathsf{LT}$. The configuration on the right shows the final stable configuration, after carrying out the stabilization in the upper right copy of $\Vic_{n-1}$.}
\label{fig:second-step-identity}
\end{figure}
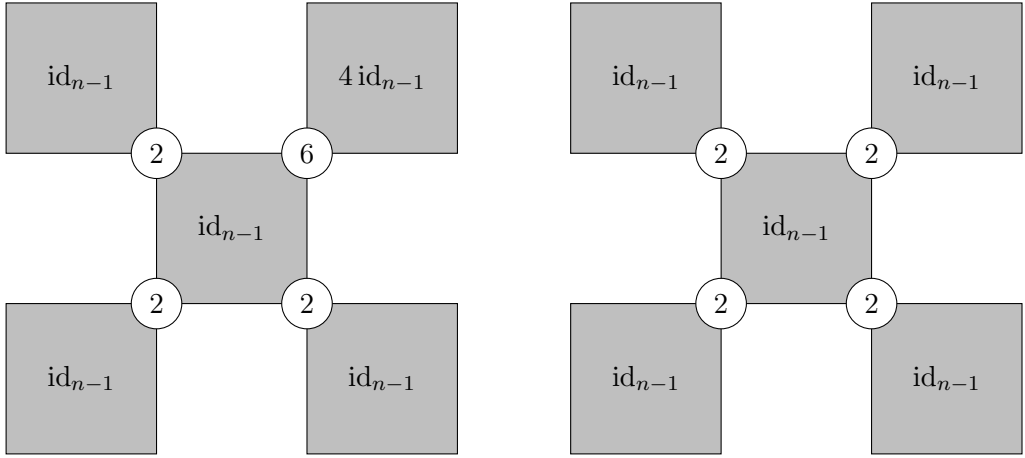

\begin{figure}[h]
    \centering
    \begin{tikzpicture}[scale=0.5, transform shape]
    \input{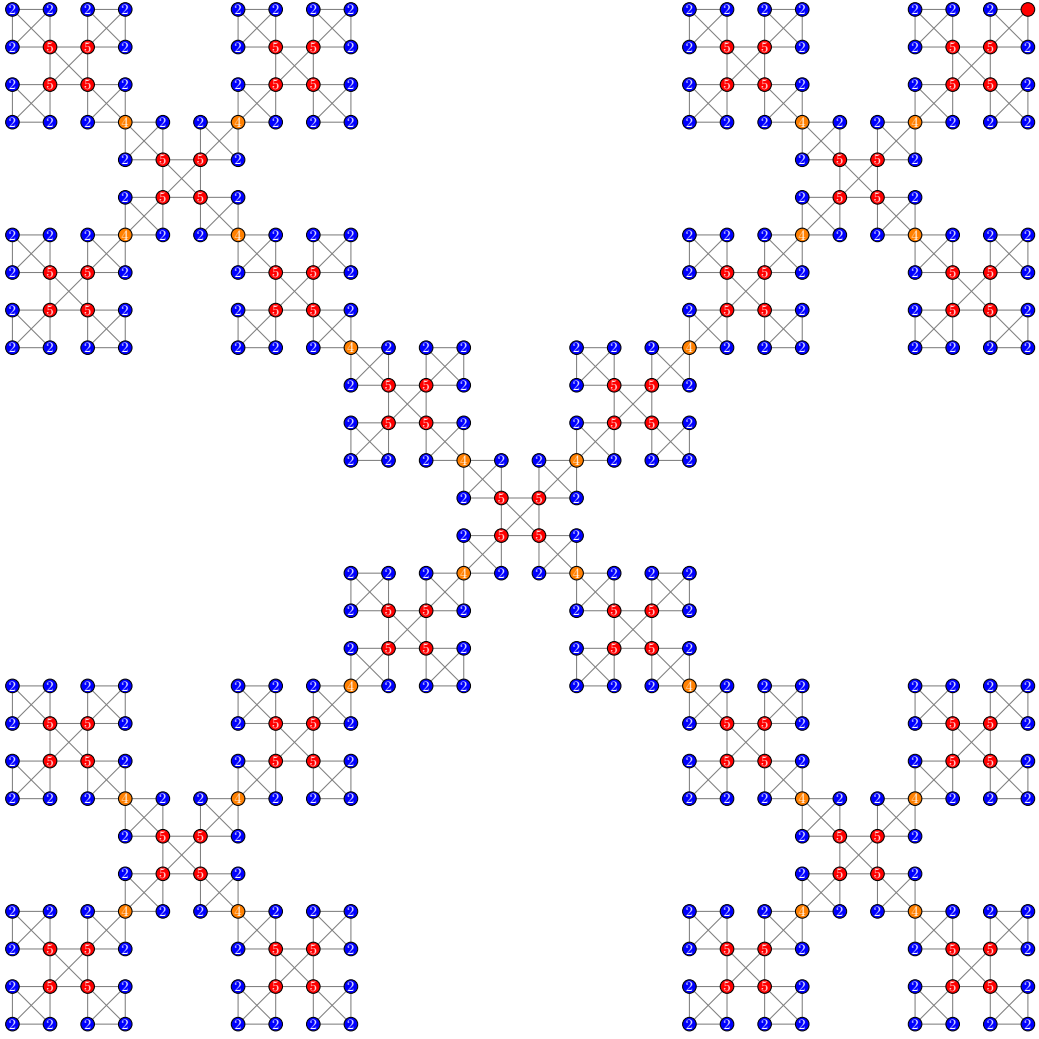}
    \end{tikzpicture}
    \caption{The identity $\id_3$ on $\Vic_3$ with colors according to the configuration height (blue = 2, orange = 4, red = 5).}
    \label{fig:identity_3}
\end{figure}

\subsection{Sandpile group of the Vicsek graph $\Vic$ and stabilisation in infinite volume}\label{sec:sandpile-group}

For any finite Abelian group $(G,+)$ with identity $0$, we define the set of elements of order $k$ by
\begin{align*}
    \orderset(G,k) = \{g\in G: g^k = 0\}.
\end{align*}
We show first an upper bound for the elements of order 2, which then applies to the case of finite Vicsek graphs $\Vic_n$, $n\in\N$.

\begin{lem}\label{lem:preimage-count}
    Let $(G,+)$ be a finite Abelian group where every element has order either $2$ or $4$, and take any $g\in G$. Define the mapping $f:G\rightarrow G$ by $f(h)=h+h$. Then we have
    \begin{align*}
        |f^{-1}(\{g\})|\in\{0,|\orderset(G,2)|\}.
    \end{align*}
\end{lem}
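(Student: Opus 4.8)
The plan is to observe that $f$ is a group homomorphism and then invoke the standard fact that the nonempty fibres of a homomorphism are cosets of its kernel, all of equal size. Concretely, since $(G,+)$ is abelian, for any $h_1,h_2\in G$ we have $f(h_1+h_2)=(h_1+h_2)+(h_1+h_2)=(h_1+h_1)+(h_2+h_2)=f(h_1)+f(h_2)$, so $f\colon G\to G$ is an endomorphism. Its kernel is $\ker f=\{h\in G: h+h=0\}$, which is exactly $\orderset(G,2)$ by the definition given above (writing $h^2$ for $h+h$).

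Next I would argue the fibre statement. Fix $g\in G$. If $g\notin \im f$, then $f^{-1}(\{g\})=\varnothing$ and $|f^{-1}(\{g\})|=0$. Otherwise, pick any $h_0\in f^{-1}(\{g\})$; then for $h\in G$ one has $f(h)=g=f(h_0)$ if and only if $f(h-h_0)=0$, i.e.\ $h-h_0\in\ker f=\orderset(G,2)$. Hence $f^{-1}(\{g\})=h_0+\orderset(G,2)$ is a coset of $\orderset(G,2)$, and since translation $x\mapsto x+h_0$ is a bijection of $G$, $|f^{-1}(\{g\})|=|\orderset(G,2)|$. Combining the two cases gives $|f^{-1}(\{g\})|\in\{0,\,|\orderset(G,2)|\}$, as claimed.

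There is essentially no hard step here: the argument is purely formal group theory, and in fact the hypothesis that every element of $G$ has order $2$ or $4$ is not used — it is only the abelianness that matters, and the hypothesis is presumably retained because in the intended application $G=\mathcal{R}_n$ does satisfy it (cf.\ Proposition \ref{prop:order-upper-bound} / Theorem \ref{thm:sandpile-group}). The only point to be slightly careful about when writing it up is to state clearly that one uses the coset decomposition of $G$ induced by the subgroup $\ker f$, so that the phrase ``all fibres have the same cardinality'' is justified by Lagrange's theorem rather than left as folklore.
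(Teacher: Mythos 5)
Your proof is correct and is essentially the same argument as the paper's: the paper exhibits the fibre $f^{-1}(\{g\})$ as the coset $g/2+\orderset(G,2)$ via the explicit translations $h\mapsto h+g/2$ and $k\mapsto k-g/2$, which is exactly your kernel/coset argument phrased without the word ``homomorphism''. Your observation that the order-$2$-or-$4$ hypothesis is never used is also accurate.
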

\begin{proof}
Assume that $f^{-1}(\{g\})\neq\emptyset$, and choose an element from $f^{-1}(\{g\})$ and denote it by $g/2$. For every $h\in\orderset(G,2)$ we have that $h+g/2\in f^{-1}(\{g\})$ since
\begin{align*}
f(h+g/2)=h+h+g/2+g/2=g,
\end{align*}
thus $|f^{-1}(\{g\})|\leq |\orderset(G,2)|$. On the other hand for every $k\in f^{-1}(\{g\})$, we have $k-g/2\in\orderset(G,2)$ in view of
    \begin{align*}
        k-g/2+k-g/2=2k-g=0.
    \end{align*}
Thus it holds $|f^{-1}(\{g\})|=|\orderset(G,2)|$ and this proves the claim.
\end{proof}
Applying now this lemma to the sandpile group $\mathcal{R}_n$ of $\Vic_n$ yields the following.

\begin{lem}\label{lem:order-2-upperbound}
For every $n\in\N$, it holds 
\begin{align*}
|\orderset(\mathcal{R}_{n},2)| \leq |\orderset(\mathcal{R}_{n-1},2)|^5.
\end{align*}
\end{lem}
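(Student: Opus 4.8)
The plan is to deduce the bound from Lemma~\ref{lem:preimage-count} applied to $\mathcal{R}_{n-1}$, exploiting the decomposition of $\Vic_n$ into five copies of $\Vic_{n-1}$. First, Lemma~\ref{lem:preimage-count} is applicable to $\mathcal{R}_{n-1}$: by Proposition~\ref{prop:order-upper-bound} every element of $\mathcal{R}_{n-1}$ has order $2$ or $4$, so the doubling map $f\colon\mathcal{R}_{n-1}\to\mathcal{R}_{n-1}$, $f(g)=g+g$, has every fibre $f^{-1}(\{h\})$ either empty or of size exactly $|\orderset(\mathcal{R}_{n-1},2)|$; note that $\orderset(\mathcal{R}_{n-1},2)=f^{-1}(\{0\})$ is the set of elements of order dividing $2$.

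Next I would invoke the merging construction of Section~\ref{sec:vicsek-sandpile-group}. Since merging five recurrent configurations with $k=3$ yields a recurrent configuration on $\Vic_n$, and since $|\mathcal{R}_n|=16^{5^n}=(16^{5^{n-1}})^5=|\mathcal{R}_{n-1}|^5$ by the matrix-tree theorem, the merging map is a bijection between $\mathcal{R}_{n-1}^{\,5}$ and $\mathcal{R}_n$: a recurrent $\eta$ on $\Vic_n$ is determined by, and recovered from, its five restrictions $\eta_{\mathsf{LB}},\eta_{\mathsf{RB}},\eta_{\mathsf{LT}},\eta_{\mathsf{RT}},\eta_{\mathsf{M}}$ to the copies of $\Vic_{n-1}$ (where, in the spirit of Dhar's burning algorithm, each cutpoint of $\Vic_n$ that is not the local sink of the copy it lies in is read with $3$ particles removed). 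Writing $\eta\leftrightarrow(\eta_l)_l$, the heart of the argument is the equivalence
\begin{align*}
2[\eta]=0 \ \text{in}\ \mathcal{R}_n \qquad\Longleftrightarrow\qquad 2[\eta_l]=g_l \ \text{in}\ \mathcal{R}_{n-1}\ \text{for each}\ l,
\end{align*}
with $g_l\in\mathcal{R}_{n-1}$ a fixed element not depending on the other four coordinates. Granting this, $\orderset(\mathcal{R}_n,2)$ is in bijection with $\prod_l f^{-1}(\{g_l\})$, and Lemma~\ref{lem:preimage-count} bounds each factor by $|\orderset(\mathcal{R}_{n-1},2)|$, giving $|\orderset(\mathcal{R}_n,2)|\le|\orderset(\mathcal{R}_{n-1},2)|^5$.

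The main obstacle is precisely the displayed equivalence, i.e.\ showing that the toppling dynamics behind the relation ``$2\eta\approx 0$'' on $\Vic_n$ decouples across the five copies, each cutpoint acting, for its copy, as the sink of $\Vic_{n-1}$. This is what the cut-point machinery is designed for: as in the proofs of Lemma~\ref{lem:diagonalization}, Corollary~\ref{cor:4particlesDiag} and Lemma~\ref{lem:stability-outside-geodsubgraph}, one stabilises $2\eta$ on $\Vic_n$ copy by copy, using that every simple path leaving a copy passes through its cutpoint, so that all mass exchanged between a copy and the rest of $\Vic_n$ is bookkept at a single vertex; combined with Dhar's burning algorithm this pins down the class of $2\eta_l$ in $\mathcal{R}_{n-1}$ for each $l$ independently of the other copies, producing the constants $g_l$.

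Finally, I would note an alternative and shorter route: by the standard fact that the critical group of a graph is the direct product of the critical groups of its $2$-connected blocks, together with the dihedral symmetry of $\Vic_{n-1}$ (whose critical group with the sink at any corner is $\mathcal{R}_{n-1}$), one obtains directly $\mathcal{R}_n\cong\mathcal{R}_{n-1}^{\,5}$, which gives the lemma with equality. The argument above is the self-contained, sandpile-theoretic counterpart that fits the framework developed in this paper, with Lemma~\ref{lem:preimage-count} absorbing the fact that the relevant fibres of the doubling map may be empty.
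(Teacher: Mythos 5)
Your proposal follows essentially the same route as the paper: decompose $\Vic_n$ into the five copies of $\Vic_{n-1}$ glued at cutpoints, observe that an order-$2$ element of $\mathcal{R}_n$ restricts on each copy to a solution of a doubling equation in $\mathcal{R}_{n-1}$, and bound the number of solutions of each such equation by $\abs{\orderset(\mathcal{R}_{n-1},2)}$ via Lemma~\ref{lem:preimage-count}. One claim should be weakened, though: the displayed equivalence with targets $g_l$ ``not depending on the other four coordinates'' is stronger than what the cut-point analysis gives and is not needed. In the paper's argument the condition on the middle copy reads $[2\eta_\mathsf{M}]=-[(3+s(\eta_\mathsf{LB}))\delta_{c_\mathsf{LB}}+(3+s(\eta_\mathsf{RB}))\delta_{c_\mathsf{RB}}+(3+s(\eta_\mathsf{LT}))\delta_{c_\mathsf{LT}}]$, where $s(\eta_i)$ is the number of particles pushed into the cutpoint when $2\eta_i$ is stabilised; this count a priori depends on which order-$2$ element $\eta_i$ was chosen (it is governed by the total mass of $\eta_i$), and similarly the target for $\eta_\mathsf{RT}$ depends on everything else through $t(\eta)$. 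So $\orderset(\mathcal{R}_n,2)$ is not exhibited as a product of fibres over fixed targets; instead one counts sequentially: the three corner restrictions must lie in $\orderset(\mathcal{R}_{n-1},2)$, and \emph{conditionally on those choices} the classes $[2\eta_\mathsf{M}]$ and then $[2\eta_\mathsf{RT}]$ are pinned down, so each admits at most $\abs{\orderset(\mathcal{R}_{n-1},2)}$ preimages by Lemma~\ref{lem:preimage-count}. This conditional count still multiplies out to the claimed bound, so your conclusion stands once the decoupling claim is relaxed. Your closing remark — that the critical group factors over the $2$-connected blocks, giving $\mathcal{R}_n\cong\mathcal{R}_{n-1}^{\,5}$ (indeed $\mathcal{R}_n\cong\Z_4^{2\cdot 5^n}$ directly, since the blocks are copies of $K_4$) — is a valid shortcut that would yield the lemma, and in fact Theorem~\ref{thm:sandpile-group}, with equality; the paper instead stays within its self-contained sandpile machinery.
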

\begin{proof}
Let $\eta \in \orderset(\mathcal{R}_{n},2)$ and denote the restrictions of $\eta$ to the five  copies of $\Vic_{n-1}$ in $\Vic_n$ by
    \begin{align*}
        \eta_\mathsf{LB} & = \eta|_{V_{n-1}^\mathsf{LB}}, \quad \eta_\mathsf{RB} = \eta|_{V_{n-1}^\mathsf{RB}},\\
        \eta_\mathsf{LT} & = \eta|_{V_{n-1}^\mathsf{LT}}, \quad \eta_\mathsf{RT}  = \eta|_{V_{n-1}^\mathsf{RT}}, \\ 
        \eta_\mathsf{M}  &= \eta|_{V_{n-1}^\mathsf{M}}.
    \end{align*}
It holds that $\eta_\mathsf{LB}, \eta_\mathsf{RB}, \eta_\mathsf{LT}\in \orderset(\mathcal{R}_{n-1},2)$ because after stabilization of $2\eta$, due to the Dhar's burning bijection, the configurations on the respective restrictions $\Vic_{n-1}^\mathsf{LB},\Vic_{n-1}^\mathsf{RB},\Vic_{n-1}^\mathsf{LT}$ do not change any further and have to match $\id_{n-1}$ by Lemma \ref{lem:id-construction}.
We consider now $\eta$ restricted on the central copy, that is $ \eta_\mathsf{M}$,  and denote by $s(\eta_i)$ the number of particles sent into the sink vertex $c_i$  when stabilizing $2\eta_i$ for $i\in\{\mathsf{LB},\mathsf{RB},\mathsf{LT}\}$. Then for $\eta$ to have order $2$ in $\mathcal{R}_n$, we must have
    \begin{align*}
        (2\eta_\mathsf{M}+(3+s(\eta_\mathsf{LB}))\delta_{c_\mathsf{LB}}+(3+s(\eta_\mathsf{RB}))\delta_{c_\mathsf{RB}}+(3+s(\eta_\mathsf{LT}))\delta_{c_\mathsf{LT}})^\circ=\mathsf{id}_{n-1},
    \end{align*}
equation which rewritten in terms of the sandpile group gives
\begin{align}
[2\eta_\mathsf{M}]=-[(3+s(\eta_\mathsf{LB}))\delta_{c_\mathsf{LB}}+(3+s(\eta_\mathsf{RB}))\delta_{c_\mathsf{RB}}+(3+s(\eta_\mathsf{LT}))\delta_{c_\mathsf{LT}}].\label{eq:order2-for-mid-configuration}
\end{align}
By Lemma \ref{lem:preimage-count} there are at most $|\orderset(\mathcal{R}_{n-1},2)|$ possible choices for $\eta_\mathsf{M}$.
If we write $t(\eta)$ for the number of particles sent into the sink $c_{\mathsf{RT}}$ when stabilizing $2\eta$ on $\Vic_{n}\backslash \Vic_{n-1}^\mathsf{RT}$, that is all but the top right copy of $\Vic_{n-1}$ in $\Vic_n$. So in order for $\eta$ on $\Vic_n$ to be of order $2$ we must have
\begin{align*}
(2\eta_\mathsf{RT}+(3+t(\eta))\delta_{c_\mathsf{RT}})^\circ=\mathsf{id}_{n-1},
 \end{align*}
 or in terms of the sandpile group
    \begin{align*}
        [2\eta_\mathsf{RT}]=-[(3+t(\eta))\delta_{c_\mathsf{RT}}].
    \end{align*}
Once again by Lemma \ref{lem:preimage-count} we have at most $|\orderset(\mathcal{R}_{n-1},2)|$ possible choices for $\eta_\mathsf{RT}$. By exactly the same argument, for every $i\in\{\mathsf{LB},\mathsf{RB},\mathsf{RT},\mathsf{LT},\mathsf{M}\}$ there are at most $|\orderset(\mathcal{R}_{n-1},2)|$ choices for $\eta_i$, leading to a total of $|\orderset(\mathcal{R}_{n-1},2)|^5$ choices for $\eta$.
\end{proof}

We are now ready to prove Theorem \ref{thm:sandpile-group}.
\begin{proof}[Proof of Theorem \ref{thm:sandpile-group}]
For any $n\in\N$ and every $[\eta]\in\mathcal{R}_{n}$, we have by Proposition \ref{prop:order-upper-bound} that $        \order_{\mathcal{R}_n}([\eta])\leq 4$, which together with the fundamental theorem for finite, Abelian groups implies the existence of $k,l\in\N$ such that
    \begin{align}\label{eq:cong-of-sp}
        \mathcal{R}_{n} \cong \Z_2^l\times\Z_4^k,\quad \text{with} \quad 2^l\cdot 4^k=16^{5^n}.
    \end{align}
 Thus there exist $\sum_{i=0}^{l+k}{l+k \choose i}=2^{l+k}$
    elements $[\eta]\in\mathcal{R}_{n}$ such that $[\eta]+[\eta]=[\id_n]$. Since $|\orderset(\mathcal{R}_{n},2)|\leq|\orderset(\mathcal{R}_{n-1},2)|^5$ and $|\orderset(\mathcal{R}_{0},2)|=4$ by Lemma \ref{lem:order-2-upperbound}, we obtain
 $|\orderset(\mathcal{R}_{n},2)|=2^{2\cdot 5^n}$, because this is the minimal number of order 2 elements of all possible choices of $l$ and $k$. Under the constraints on $l$ and $k$ from (\ref{eq:cong-of-sp}), it then follows that
$l=0$ and $k=2\cdot 5^n$
which together with Equation \eqref{eq:cong-of-sp} completes the proof.
\end{proof}
By considering the order of the elements of the sandpile groups $\mathcal{R}_n$ of $\Vic_n$ for all $n\in\N$, we can now 
give an alternative proof for non-stabilization on the Vicsek graph, which is weaker than Theorem \ref{thm:vicsek-explosion}, since we cannot compute the explicit stabilisation probability.

\begin{proof}[Second Proof of Non-Stabilization on the Vicsek graph]
From Theorem \ref{thm:sandpile-group}, for every $n\in\N$ and every $[\eta]\in\mathcal{R}_{n}$ we have
$\order_{\mathcal{R}_{n}}([\eta])\leq 4$ which together with Theorem \ref{thm:relation-order-stabilziation} implies the existence of a constant $c>0$ such that for all $R>0$ and all $x\in V$ we have
$\mu(\mathsf{diam}(\mathsf{T}(\eta+\delta_x))>R)>c$. Here $\mu$ is the infinite volume limit on $\Vic$ and $\eta$ is a sandpile sampled from $\mu$. Thus we have
    \begin{align*}
        \mu(\eta+\delta_o\text{ stabilizes})=\lim_{R\rightarrow\infty}\mu(\mathsf{diam}(\mathsf{T}(\eta+\delta_x))\leq R)<1-c<1,
\end{align*}
which proves the explosion with probability at least $c>0$. 
\end{proof}

\textbf{Acknowledgments.} The research of R.~Kaiser and E.~Sava-Huss is supported by the Austrian Science Fund (FWF) P 34129. N.~Heizmann expresses his gratitude to the University of Innsbruck for hospitality and support. His visit to Innsbruck was financially supported by the same research grant (FWF) P 34129.  We are very grateful to the two anonymous referees for their suggestions and positive criticism that improved substantially the quality and the presentation of the paper.

\bibliographystyle{alpha}
\bibliography{lit}

\newcommand{\etalchar}[1]{$^{#1}$}
\begin{thebibliography}{KSHW24}

\bibitem[AJ04]{jarai-uniform-volume-limit}
Siva Athreya and Antal Járai.
\newblock Infinite volume limit for the stationary distribution of {A}belian
  sandpile models.
\newblock {\em Communications in Mathematical Physics}, 249:197--213, 01 2004.

\bibitem[BHJ17]{bhupatiraju_inequalities_2017}
Sandeep Bhupatiraju, Jack Hanson, and Antal~A. Járai.
\newblock Inequalities for critical exponents in $d$-dimensional sandpiles.
\newblock {\em Electronic Journal of Probability}, 22:1--51, 2017.

\bibitem[BLPS01]{lyons-usf}
Itai Benjamini, Russell Lyons, Yuval Peres, and Oded Schramm.
\newblock {Uniform spanning forests}.
\newblock {\em The Annals of Probability}, 29(1):1 -- 65, 2001.

\bibitem[BR24]{bou-rabee-2024}
Ahmed Bou-Rabee.
\newblock Integer superharmonic matrices on the {$F$}-lattice.
\newblock {\em Adv. Math.}, 436:Paper No. 109400, 98, 2024.

\bibitem[BTW88]{bak_self-organized_1988}
Per Bak, Chao Tang, and Kurt Wiesenfeld.
\newblock Self-organized criticality.
\newblock {\em Physical Review A}, 38(1):364--374, July 1988.

\bibitem[CPS08]{caracciolo-identity-2008}
Sergio Caracciolo, Guglielmo Paoletti, and Andrea Sportiello.
\newblock Explicit characterization of the identity configuration in an abelian
  sandpile model.
\newblock {\em J. Phys. A}, 41(49):495003, 17, 2008.

\bibitem[Dha90]{dhar_self-organized_1990}
Deepak Dhar.
\newblock Self-organized critical state of sandpile automaton models.
\newblock {\em Physical Review Letters}, 64(14):1613--1616, April 1990.

\bibitem[DM90]{height_prob_trees-dhar}
Deepak Dhar and S.~N. Majumdar.
\newblock Abelian sandpile model on the {B}ethe lattice.
\newblock {\em J. Phys. A}, 23(19):4333--4350, 1990.

\bibitem[DRSV95]{dhar-asm-1995}
D.~Dhar, P.~Ruelle, S.~Sen, and D.-N. Verma.
\newblock Algebraic aspects of abelian sandpile models.
\newblock {\em J. Phys. A}, 28(4):805--831, 1995.

\bibitem[DV98]{daerden_sandpiles_1998}
Frank Daerden and Carlo Vanderzande.
\newblock Sandpiles on a {Sierpinski} gasket.
\newblock {\em Physica A: Statistical Mechanics and its Applications},
  256(3):533--546, 1998.

\bibitem[HKSH23]{heizmann_height_2023}
Nico Heizmann, Robin Kaiser, and Ecaterina Sava-Huss.
\newblock Height probabilities for {Abelian} sandpiles and the looping constant
  on {Sierpinski} graphs, 2023.
\newblock arXiv:2308.03445 [math].

\bibitem[HLM{\etalchar{+}}08]{Holroyd_2008}
Alexander~E. Holroyd, Lionel Levine, Karola M{\'{e}}sz{\'{a}}ros, Yuyal Peres,
  James Propp, and David~B. Wilson.
\newblock Chip-firing and rotor-routing on directed graphs.
\newblock In {\em In and Out of Equilibrium 2}, pages 331--364. Birkhäuser
  Basel, 2008.

\bibitem[J\'18]{jarai_sandpile_2018}
Antal~A. J\'{a}rai.
\newblock Sandpile models.
\newblock {\em Probab. Surv.}, 15:243--306, 2018.

\bibitem[JLP19]{abel-sand-mix-pike-levine-jerison}
Daniel~C. Jerison, Lionel Levine, and John Pike.
\newblock Mixing time and eigenvalues of the abelian sandpile {M}arkov chain.
\newblock {\em Trans. Amer. Math. Soc.}, 372(12):8307--8345, 2019.

\bibitem[JS19]{jarai_toppling_2019}
Antal~A. Járai and Minwei Sun.
\newblock Toppling and height probabilities in sandpiles.
\newblock {\em Journal of Statistical Mechanics: Theory and Experiment},
  2019(11):113204, 2019.

\bibitem[JW14]{one-ended}
Antal~A. J\'arai and Nicol\'as Werning.
\newblock Minimal configurations and sandpile measures.
\newblock {\em J. Theoret. Probab.}, 27(1):153--167, 2014.

\bibitem[KSH24]{scaling_identity}
Robin Kaiser and Ecaterina Sava-Huss.
\newblock Scaling limit of the sandpile identity element on the {S}ierpinski
  gasket.
\newblock In {\em From classical analysis to analysis on fractals: a tribute to
  {R}obert {S}trichartz}, volume~2. Birkhäuser, 2024.
\newblock https://arxiv.org/abs/2308.12183.

\bibitem[KSHW24]{sandpile-group-gasket}
Robin Kaiser, Ecaterina Sava-Huss, and Yuwen Wang.
\newblock Abelian sandpiles on {S}ierpinski gasket graphs.
\newblock {\em The Electronic Journal of Combinatorics}, 31(1):Paper No. 1.6,
  2024.

\bibitem[LBR02]{borgne-identity-2002}
Yvan Le~Borgne and Dominique Rossin.
\newblock On the identity of the sandpile group.
\newblock {\em Discrete Math.}, 256(3):775--790, 2002.

\bibitem[LPS16]{levine-pegden-smart-2016}
Lionel Levine, Wesley Pegden, and Charles~K. Smart.
\newblock Apollonian structure in the {A}belian sandpile.
\newblock {\em Geom. Funct. Anal.}, 26(1):306--336, 2016.

\bibitem[LPS17]{levine-pegden-smart-2017}
Lionel Levine, Wesley Pegden, and Charles~K. Smart.
\newblock The {A}pollonian structure of integer superharmonic matrices.
\newblock {\em Ann. of Math. (2)}, 186(1):1--67, 2017.

\bibitem[Man90]{manna_large-scale_1990}
S.~S. Manna.
\newblock Large-scale simulation of avalanche cluster distribution in sandpile
  model.
\newblock {\em Journal of Statistical Physics}, 59(1):509--521, 1990.

\bibitem[Pao14]{paoletti2012}
Guglielmo Paoletti.
\newblock {\em Deterministic abelian sandpile models and patterns}.
\newblock Springer Theses. Springer, Cham, 2014.
\newblock Thesis, University of Pisa, Pisa, 2012.

\bibitem[Pri94]{priezzhev_structure_1994}
V.~B. Priezzhev.
\newblock Structure of two-dimensional sandpile. {I}. {H}eight probabilities.
\newblock {\em Journal of Statistical Physics}, 74(5):955--979, 1994.

\bibitem[PS13]{Pegden_2013}
Wesley Pegden and Charles~K. Smart.
\newblock Convergence of the {A}belian sandpile.
\newblock {\em Duke Mathematical Journal}, 162(4), 2013.

\bibitem[PS20]{pegden-smart-2020}
Wesley Pegden and Charles~K. Smart.
\newblock Stability of patterns in the {A}belian sandpile.
\newblock {\em Ann. Henri Poincar\'e}, 21(4):1383--1399, 2020.

\bibitem[SC04]{random-walk-saloffe}
Laurent Saloff-Coste.
\newblock {\em Random Walks on Finite Groups}, pages 263--346.
\newblock Springer Berlin Heidelberg, Berlin, Heidelberg, 2004.

\bibitem[Str06]{an-on-fractals}
Robert~S. Strichartz.
\newblock {\em Differential equations on fractals}.
\newblock Princeton University Press, Princeton, NJ, 2006.
\newblock A tutorial.

\end{thebibliography}

\textsc{Nico Heizmann}, Department of Mathematics, Chemnitz University of Technology, Germany\\
\texttt{nico.heizmann@math.tu-chemnitz.de}

\textsc{Robin Kaiser}, Institut für Mathematik, Universität Innsbruck, Austria.\\
\texttt{Robin.Kaiser@uibk.ac.at}

\textsc{Ecaterina Sava-Huss}, Institut für Mathematik, Universität Innsbruck, Austria.\\
\texttt{Ecaterina.Sava-Huss@uibk.ac.at}
\end{document}